\newcommand*{\Qp}{\mathbf{Q}_p}
\newcommand*{\Cp}{\mathbf{C}_p}
\newcommand*{\Zp}{\mathbf{Z}_p}
\newcommand*{\ZZ}{\mathbf{Z}}
\newcommand*{\OO}{\mathcal{O}}
\newcommand*{\MM}{\mathfrak{m}}
\newcommand*{\Qpbar}{\overline{\mathbf{Q}}_p}
\renewcommand{\phi}{\varphi}
\newcommand*{\phiq}{\varphi_q}
\newcommand*{\psiq}{\psi_q}
\renewcommand{\projlim}{\varprojlim}
\renewcommand{\geq}{\geqslant}
\renewcommand{\leq}{\leqslant} 
\newcommand*{\calR}{\mathcal{R}}
\newcommand*{\Gal}{\mathrm{Gal}}
\newcommand*{\Hom}{\mathrm{Hom}}
\newcommand*{\ind}{\mathrm{ind}}
\newcommand*{\cor}{\mathrm{cor}}
\newcommand*{\res}{\mathrm{res}}
\newcommand*{\Mat}{\mathrm{Mat}}
\newcommand*{\End}{\mathrm{End}}
\newcommand*{\GL}{\mathrm{GL}}
\newcommand*{\Id}{\mathrm{Id}}
\newcommand*{\Nm}{\mathrm{N}}
\newcommand*{\Col}{\operatorname{Col}}
\newcommand*{\Exp}{\operatorname{Exp}}
\newcommand*{\Fil}{\mathrm{Fil}}
\newcommand*{\val}{\mathrm{val}}
\newcommand*{\vp}{\mathrm{val}_p}
\newcommand*{\an}{\mathrm{an}}
\newcommand*{\unr}{\mathrm{unr}}
\newcommand*{\cyc}{\mathrm{cyc}}
\newcommand*{\bfont}{\mathbf{B}}
\newcommand*{\afont}{\mathbf{A}}
\newcommand*{\bdr}{\mathbf{B}_{\mathrm{dR}}}  
\newcommand*{\bcris}[1]{\mathbf{B}_{\mathrm{cris}, #1}}  
\newcommand*{\bmax}[1]{\mathbf{B}_{\max, #1}}  
\newcommand*{\btrig}[2]{\widetilde{\mathbf{B}}^{\dagger #1}_{\mathrm{rig} #2}} 
\newcommand*{\brig}[2]{\mathbf{B}^{\dagger #1}_{\mathrm{rig} #2}}
\newcommand*{\brigplus}[1]{\mathbf{B}^{+}_{\mathrm{rig} #1}}
\newcommand*{\btdag}[1]{\widetilde{\mathbf{B}}^{\dagger #1}}
\newcommand*{\bdag}[1]{\mathbf{B}^{\dagger #1}}
\renewcommand{\ddag}[1]{\mathrm{D}^{\dagger #1}}
\newcommand*{\drig}[1]{\mathrm{D}^{\dagger #1}_{\mathrm{rig}}}
\newcommand*{\dfont}{\mathrm{D}}
\newcommand*{\dcris}{\mathrm{D}_{\mathrm{cris}}}
\newcommand{\dpar}[1]{(\!( #1 )\!)}
\newcommand{\dcroc}[1]{[\![ #1 ]\!]}
\newcommand*{\LT}{\operatorname{LT}}
\newcommand*{\Gm}{\mathbf{G}_{\mathrm{m}}}
\newcommand*{\Log}{\operatorname{Log}}
\newcommand*{\chicyc}{\chi_{\cyc}}
\newcommand*{\chilt}{\chi_\pi}
\newcommand*{\Tr}{\mathrm{Tr}}
\newcommand*{\TT}{\mathrm{Tr}^{\LT}}
\newcommand*{\cH}{\mathrm{H}}
\newcommand*{\cB}{\mathrm{B}}
\newcommand*{\cZ}{\mathrm{Z}}
\newcommand{\pmat}[1]{\begin{pmatrix} #1 \end{pmatrix}}
\newcommand*{\Han}{\mathrm{H}_{\mathrm{an}}}
\newcommand*{\Hiw}{\mathrm{H}_{\mathrm{Iw}}}
\newcommand*{\Zan}{\mathrm{Z}_{\mathrm{an}}}
\newcommand*{\Ban}{\mathrm{B}_{\mathrm{an}}}
\newcommand*{\coker}{\operatorname{coker}}
\newcommand*{\Res}{\operatorname{Res}}
\author{Laurent Berger}
\address{UMPA de l'ENS de Lyon \\
UMR 5669 du CNRS \\ Universit\'e de Lyon}
\email{laurent.berger@ens-lyon.fr}
\urladdr{perso.ens-lyon.fr/laurent.berger/}
\author{Lionel Fourquaux}
\address{IRMAR \\ UMR 6625 du CNRS \\ Universit\'e Rennes 1}
\email{lionel.fourquaux+ovrlt@normalesup.org}
\urladdr{www.normalesup.org/~fourquau/pro/}
\date{\today}
\title[Iwasawa theory and $F$-analytic Lubin-Tate $(\varphi,\Gamma)$-modules]
{Iwasawa theory and $F$-analytic Lubin-Tate $(\varphi,\Gamma)$-modules}
\subjclass{11F; 11S; 14G}
\keywords{$p$-adic representation; $(\phi,\Gamma)$-module; Lubin-Tate group; overconvergent representation; $p$-adic Hodge theory; analytic cohomology; normalized traces; Bloch-Kato exponential; Iwasawa theory; Kummer theory}
\begin{document}

\begin{abstract}
Let $K$ be a finite extension of $\Qp$. We use the theory of $(\varphi,\Gamma)$-modules in the Lubin-Tate setting to construct some corestriction-compatible families of classes in the cohomology of $V$, for certain representations $V$ of $\Gal(\Qpbar/K)$. If in addition $V$ is crystalline, we describe these classes explicitly using Bloch-Kato's exponential maps. This allows us to generalize Perrin-Riou's period map to the Lubin-Tate setting.
\end{abstract}

\maketitle

\setcounter{tocdepth}{2}

\tableofcontents

\setlength{\baselineskip}{18pt}

\section*{Introduction}

Let $K$ be a finite extension of $\Qp$ and let $G_K = \Gal(\Qpbar/K)$. In this article, we use the theory of $(\varphi,\Gamma)$-modules in the Lubin-Tate setting to construct some classes in $\cH^1(K,V)$, for ``$F$-analytic'' representations $V$ of $G_K$. If in addition $V$ is crystalline, we describe these classes explicitly using Bloch and Kato's exponential maps and generalize Perrin-Riou's period map to the Lubin-Tate setting.

We now describe our constructions in more detail, and introduce some notation which is used throughout this paper. Let $F$ be a finite Galois extension of $\Qp$, with ring of integers $\OO_F$ and maximal ideal $\MM_F$, let $\pi$ be a uniformizer of $\OO_F$ and let $k_F=\OO_F/\pi$ and $q = \mathrm{Card}(k_F)$. Let $\LT$ be the Lubin-Tate formal group \cite{LT65} attached to $\pi$. We fix a coordinate $T$ on $\LT$, so that for each $a \in \OO_F$ the multiplication-by-$a$ map is given by a power series $[a](T)=aT+\mathrm{O}(T^2) \in \OO_F\dcroc{T}$. Let $\log_{\LT}(T)$ denote the attached logarithm and $\exp_{\LT}(T)$ its inverse for the composition. Let $\chilt : G_F \to \OO_F^\times$ be the attached Lubin-Tate character. If $K$ is a finite extension of $F$, let $K_n = K(\LT[\pi^n])$ and $K_\infty = \cup_{n \geq 1} K_n$ and $\Gamma_K = \Gal(K_\infty/K)$.

Let $\afont_F$ denote the set of power series $\sum_{i \in \ZZ} a_i T^i$ with $a_i \in \OO_F$ such that $a_i \to 0$ as $i \to -\infty$ and let $\bfont_F = \afont_F[1/\pi]$, which is a field. It is endowed with a Frobenius map $\phiq : f(T) \mapsto f([\pi](T))$ and an action of $\Gamma_F$ given by $g : f(T) \mapsto f([\chilt(g)](T))$. If $K$ is a finite extension of $F$, the theory of the field of norms (\cite{FW2,FW1} and \cite{WCN}) provides us with a finite unramified extension $\bfont_K$ of $\bfont_F$. Recall \cite{F90} that a $(\phi,\Gamma)$-module over $\bfont_K$ is a finite dimensional $\bfont_K$-vector space endowed with a compatible Frobenius map $\phiq$ and action of $\Gamma_K$. We say that a $(\phi,\Gamma)$-module over $\bfont_K$ is \'etale if it has a basis in which $\Mat(\phiq) \in \GL_d(\afont_K)$. The relevance of these objects is explained by the result below (see \cite{F90}, \cite{KR09}).

\begin{enonce*}{Theorem}
There is an equivalence of categories between the category of $F$-linear representations of $G_K$ and the category of \'etale $(\phi,\Gamma)$-modules over $\bfont_K$.
\end{enonce*}

Let $\bfont_F^\dagger$ denote the set of power series $f(T) \in \bfont_F$ that have a non-empty domain of convergence. The theory of the field of norms again provides us \cite{SM95} with a finite extension $\bfont_K^\dagger$ of $\bfont_F^\dagger$. We say that a $(\phi,\Gamma)$-module over $\bfont_K$ is overconvergent if it has a basis in which $\Mat(\phiq) \in \GL_d(\bfont_K^\dagger)$ and $\Mat(g) \in \GL_d(\bfont_K^\dagger)$ for all $g \in \Gamma_K$. If $F=\Qp$, every \'etale $(\varphi,\Gamma)$-module over $\bfont_K$ is overconvergent \cite{CC98}. If $F \neq \Qp$, this is no longer the case \cite{FX13}. Let us say that an $F$-linear representation $V$ of $G_K$ is $F$-analytic if for all embeddings $\tau : F \to \Qpbar$, with $\tau \neq \Id$, the representation $\Cp \otimes_F^\tau V$ is trivial (as a semilinear $\Cp$-representation of $G_K$). The following result is known \cite{PGMLAV}.

\begin{enonce*}{Theorem}
If $V$ is an $F$-analytic representation of $G_K$, it is overconvergent.
\end{enonce*}

Another source of overconvergent representations of $G_K$ is the set of representations that factor through $\Gamma_K$ (see \S\ref{overpgm}). Our first result is the following (theorem \ref{varcolmsurj}).

\begin{enonce*}{Theorem A}
If $V$ is an overconvergent representation of $G_K$, there exists an $F$-analytic representation $X_{\an}$ of $G_K$, a representation $Y_\Gamma$ of $G_K$ that factors through $\Gamma_K$, and a surjective $G_K$-equivariant map $X_{\an} \otimes_F Y_\Gamma \to V$.
\end{enonce*}

We next focus on $F$-analytic representations. Let $\brig{}{,F}$ denote the Robba ring, which is the ring of power series $f(T) = \sum_{i \in \ZZ} a_i T^i$ with $a_i \in F$ such that there exists $\rho<1$ such that $f(T)$ converges for $\rho < |T| < 1$. We have $\bfont_F^\dagger \subset \brig{}{,F}$. The theory of the field of norms again provides us with a finite extension $\brig{}{,K}$ of $\brig{}{,F}$. If $V$ is an $F$-linear representation of $G_K$, let $\dfont(V)$ denote the $(\phi,\Gamma)$-module over $\bfont_K$ attached to $V$. If $V$ is overconvergent, there is a well defined $(\phi,\Gamma)$-module $\ddag{}(V)$ over $\bfont_K^\dagger$ attached to $V$, such that $\dfont(V) = \bfont_K \otimes_{\bfont_K^\dagger} \ddag{}(V)$. We call $\drig{}(V)$  the $(\phi,\Gamma)$-module over $\brig{}{,K}$ attached to $V$, given by $\drig{}(V) = \brig{}{,K} \otimes_{\bfont_K^\dagger} \ddag{}(V)$.

The ring $\brig{}{,K}$ is a free $\phiq(\brig{}{,K})$-module of degree $q$. This allows us to define \cite{FX13} a map $\psiq : \brig{}{,K} \to \brig{}{,K}$ that is a $\Gamma_K$-equivariant left inverse of $\phiq$, and likewise, if $V$ is an overconvergent representation of $G_K$, a map $\psiq : \drig{}(V) \to \drig{}(V)$ that is a $\Gamma_K$-equivariant left inverse of $\phiq$. 

The main result of this article is the construction, for an $F$-analytic representation $V$ of $G_K$, of a collection of maps
\[ h^1_{K_n,V} : \drig{}(V)^{\psiq=1} \to \cH^1(K_n,V), \]
having a certain number of properties. For example, these maps are compatible with corestriction: $\cor_{K_{n+1}/K_n} \circ h^1_{K_{n+1},V} = h^1_{K_n,V}$ if $n \geq 1$. Another property is that if $F=\Qp$ and $\pi=p$ (the cyclotomic case), these maps co\"{\i}ncide with those constructed in \cite{CC99} (and generalized in \cite{LB3}).

If now $K=F$ and $V$ is a crystalline $F$-analytic representation of $G_F$, we give explicit formulas for $h^1_{F_n,V}$ using Bloch and Kato's exponential maps \cite{BK90}. Let $V$ be as above, let $\dcris(V) = (\bcris{F} \otimes_F V)^{G_F}$ (note that because the $\otimes$ is over $F$, this is the identity component of the usual $\dcris$) and let $t_\pi = \log_{\LT}(T)$. Let $\{u_n\}_{n \geq 0}$ be a compatible sequence of primitive $\pi^n$-torsion points of $\LT$. Let $\brigplus{,F}$ denote the positive part of the Robba ring, namely the ring of power series $f(T) = \sum_{i \geq 0} a_i T^i$ with $a_i \in F$ such that $f(T)$ converges for $0 \leq |T| < 1$. If $n \geq 0$, we have a map $\phiq^{-n} : \brigplus{,F} \to F_n \dcroc{t_\pi}$ given by $f(T) \mapsto f(u_n \oplus \exp_{\LT}(t_\pi/\pi^n))$. Using the results of \cite{KR09}, we prove that there is a natural $(\phi,\Gamma)$-equivariant inclusion $\drig{}(V)^{\psiq=1} \to \brigplus{,F}[1/t_\pi] \otimes_F \dcris(V)$. This provides us, by composition, with maps $\phiq^{-n} : \drig{}(V)^{\psiq=1} \to F_n \dpar{t_\pi} \otimes_F \dcris(V)$ and $\partial_V \circ \phiq^{-n} : \drig{}(V)^{\psiq=1} \to F_n \otimes_F \dcris(V)$ where $\partial_V$ is the ``coefficient of $t_\pi^0$'' map. Recall finally that we have two maps, Bloch and Kato's exponential $\exp_{F_n,V} : F_n \otimes_F \dcris(V) \to \cH^1(F_n,V)$ and its dual $\exp^*_{F_n,V^*(1)} \cH^1(F_n,V) \to F_n \otimes_F \dcris(V)$ (the subscript $V^*(1)$ denotes the dual of $V$ twisted by the cyclotomic character, but is merely a notation here). The first result is as follows (theorem \ref{interdual}).

\begin{enonce*}{Theorem B}
If $V$ is as above and $y \in \drig{}(V)^{\psiq=1}$, then
\[ \exp^*_{F_n, V^*(1)}(h^1_{F_n,V}(y)) = 
\begin{cases}
q^{-n} \partial_V(\phiq^{-n}(y)) & \text{if $n \geq 1$} \\
(1-q^{-1}\phiq^{-1})\partial_V(y) &  \text{if $n = 0$.}
\end{cases} \]
\end{enonce*}

Let $\nabla = t_\pi \cdot d/dt_\pi$, let $\nabla_i = \nabla-i$ if $i \in \ZZ$ and let $h \geq 1$ be such that $\Fil^{-h} \dcris(V) = \dcris(V)$. We prove that if $y \in (\brigplus{,F} \otimes_F \dcris(V))^{\psiq=1}$, then $\nabla_{h-1} \circ  \cdots \circ \nabla_0 (y) \in \drig{}(V)^{\psiq=1}$, and we have the following result (theorem \ref{expbk}).

\begin{enonce*}{Theorem C}
If $V$ is as above and $y \in (\brigplus{,F} \otimes_F \dcris(V))^{\psiq=1}$, then
\begin{multline*} 
h^1_{F_n,V}
(\nabla_{h-1} \circ  \cdots \circ \nabla_0 (y)) = 
 (-1)^{h-1} (h-1)!
\begin{cases}
\exp_{F_n,V}(q^{-n} \partial_V(\phiq^{-n}(y))) & \text{if $n \geq 1$} \\
\exp_{F,V}((1-q^{-1}\phiq^{-1})\partial_V(y)) & \text{if $n=0$.}
\end{cases} 
\end{multline*}
\end{enonce*}

Using theorems B and C, we give in \S\ref{bprfan} a Lubin-Tate analogue of Perrin-Riou's ``big exponential map''  \cite{BP94} using the same method as that of \cite{LB3} which treats the cyclotomic case. It will be interesting to compare this big exponential map with the ``big logarithms'' constructed in \cite{F05} and \cite{F08}.

It is also instructive to specialize theorem C to the case $V=F(\chilt)$, which corresponds to ``Lubin-Tate'' Kummer theory. Recall that if $L$ is a finite extension of $F$, Kummer theory gives us a map $\delta : \LT(\MM_L) \to \cH^1(L,F(\chilt))$. When $L$ varies among the $F_n$, these maps are compatible: the diagram
\[ \begin{CD} \LT(\MM_{F_{n+1}}) @>{\delta}>> \cH^1(F_{n+1},V) \\
@V{\TT_{F_{n+1}/F_n}}VV @VV{\cor_{F_{n+1}/F_n}}V \\
\LT(\MM_{F_n}) @>{\delta}>> \cH^1(F_n,V) \end{CD} \]
commutes. Let $S$ denote the set of sequences $\{x_n\}_{n \geq 1}$ with $x_n \in \MM_{F_n}$ and such that $\Tr^{\LT}_{F_{n+1}/F_n}(x_{n+1}) = [q/\pi](x_n)$ for $n \geq 1$. We prove that $S$ is big, in the sense that (if $F \neq \Qp$) the projection on the $n$-th coordinate map $S \otimes_{\OO_F} F \to F_n$ is onto (this would not be the case if we did not have the factor $q/\pi$ in the definition of $S$). Furthermore, we prove that if $x \in S$, there exists a power series $f(T) \in (\brigplus{,F})^{\psiq=1/\pi}$ such that $f(u_n) = \log_{\LT}(x_n)$ for $n \geq 1$. We have $d/dt_\pi(f(T)) \in (\brigplus{,F})^{\psiq=1}$ and the following holds (theorem \ref{expchif}), where $u$ is the basis of $F(\chilt)$ corresponding to the choice of $\{u_n\}_{n \geq 0}$.

\begin{enonce*}{Theorem D}
We have $h^1_{F_n,F(\chilt)} (d/dt_\pi (f(T)) \cdot u) = (q/\pi)^{-n} \cdot \delta(x_n)$ for all $n \geq 1$.
\end{enonce*}

In the cyclotomic case, there is \cite{RC79} a power series $\mathrm{Col}_x(T)$ such that $\Col_x(u_n)=x_n$ for $n \geq 1$. We then have $f(T) = \log \mathrm{Col}_x(T)$, and theorem D is proved in \cite{CC99}. In the general Lubin-Tate case, we do not know whether there is a ``Coleman power series'' of which $f(T)$ would be the $\log_{\LT}$. This seems like a non-trivial question. 

It would be interesting to compare our results with those of \cite{SV}. The authors of \cite{SV} also construct some classes in $\cH^1(K,V)$, but start from the space $\dfont(V(\chilt \cdot \chicyc^{-1}))^{\psiq=\pi/q}$. In another direction, is it possible to extend our constructions to representations of the form $V \otimes_F Y_\Gamma$ with $V$ $F$-analytic and $Y_\Gamma$ factoring through $\Gamma_K$, and in particular recover the explicit reciprocity law of \cite{TT}?

\section{Lubin-Tate $(\phi,\Gamma)$-modules}
\label{ltpgmsec}

In this chapter, we recall the theory of Lubin-Tate $(\phi,\Gamma)$-modules and classify overconvergent representations.

\subsection{Notation}
\label{nota}

Let $F$ be a finite Galois extension of $\Qp$ with ring of integers $\OO_F$, and residue field $k_F$. Let $\pi$ be a uniformizer of $\OO_F$. Let $d=[F:\Qp]$ and $e$ be the ramification index of $F/\Qp$. Let $q=p^f$ be the cardinality of $k_F$ and let $F_0=W(k_F)[1/p]$ be the maximal unramified extension of $\Qp$ inside $F$. Let $\sigma$ denote the absolute Frobenius map on $F_0$.

Let $\LT$ be the Lubin-Tate formal $\OO_F$-module attached to $\pi$ and 
choose a coordinate $T$ for the formal group law, such that the action 
of~$\pi$ on $\LT$ is given by $[\pi](T)=T^q + \pi T$. 
If $a \in \OO_F$, let $[a](T)$ denote the power series that gives the 
action of $a$ on $\LT$. Let $\log_{\LT}(T)$ denote the attached logarithm and $\exp_{\LT}(T)$ its inverse.
If $K$ is a finite extension of $F$, let 
$K_n = K(\LT[\pi^n])$ and let $K_\infty = \cup_{n \geq 1} K_n$. 
Let $H_K=\Gal(\Qpbar/K_\infty)$ and $\Gamma_K = \Gal(K_\infty/K)$. 
By Lubin-Tate theory (see \cite{LT65}), $\Gamma_K$ is isomorphic to an open 
subgroup of $\OO_F^\times$ via the Lubin-Tate character 
$\chilt : \Gamma_K \to \OO_F^\times$. 

Let $n(K) \geq 1$ be such that if $n \geq n(K)$, then $\chilt : \Gamma_{K_n} \to 1+\pi^n \OO_F$ is an isomorphism, and $\log_p : 1+\pi^n \OO_F \to \pi^n \OO_F$ is also an isomorphism.

Since $\log_{\LT}(T)$ converges on the open unit disk, it can be seen as an element of $\brigplus{,F}$ and we denote it by  $t_\pi$. Recall that $g(t_\pi) = \chilt(g) \cdot t_\pi$ if $g \in G_K$ and that $\phiq(t_\pi) = \pi \cdot t_\pi$. Let $\partial = d/dt_\pi$ so that $\partial f (T) = a(T) \cdot df(T)/dT$, where $a(T) = (d \log_{\LT}(T) / dT)^{-1} \in \OO_F \dcroc{T}^\times$. We have $\partial \circ g = \chilt(g) \cdot g \circ \partial$ if $g \in \Gamma_K$ and $\partial \circ \phiq = \pi \cdot \phiq \circ \partial$.

Recall that $\brig{}{,F}$ denotes the Robba ring, the ring of power series $f(T) = \sum_{i \in \ZZ} a_i T^i$ with $a_i \in F$ such that there exists $\rho<1$ such that $f(T)$ converges for $\rho < |T| < 1$. We have $\bdag{}_F \subset \brig{}{,F}$ and by writing a power series as the sum of its plus part and its minus part, we get $\brig{}{,F} = \brigplus{,F} + \bdag{}_F$. 

Each ring $R \in \{ \brig{}{,F}, \brigplus{,F}, \bdag{}_F, \bfont_F\}$ is equipped with a Frobenius map $\phiq : f(T) \mapsto f([\pi](T))$ and an action of $\Gamma_F$ given by $g : f(T) \mapsto f([\chilt(g)](T))$. Moreover, the ring $R$ is a free $\phiq(R)$-module of rank $q$, and we define $\psiq : R \to R$ by the formula $\phiq(\psiq(f)) = 1/q \cdot \Tr_{R/\phiq(R)}(f)$. The map $\psiq$ has the following properties (see for instance \S 2A of \cite{FX13} and \S 1.2.3 of \cite{LT}): $\psiq(x \cdot \phiq(y)) = \psiq(x) \cdot y$, the map $\psiq$ commutes with the action of $\Gamma_F$, $\partial \circ \psiq = \pi^{-1} \cdot \psiq \circ \partial$ and if $f(T) \in \brigplus{,F}$ then $\phiq \circ \psiq(f) = 1/q \cdot \sum_{z \in \LT[\pi]} f(T \oplus z)$. If $M$ is a free $R$-module with a semilinear Frobenius map $\phiq$ such that $\Mat(\phiq)$ is invertible, then any $m \in M$ can be written as $m = \sum_i r_i \cdot \phiq(m_i)$ with $r_i \in R$ and $m_i \in M$ and the map $\psiq : m \mapsto \sum_i \psiq(r_i) \cdot m_i$ is then well-defined. This applies in particular to the rings $\brig{}{,K}$, $\brigplus{,K}$, $\bdag{}_K$, $\bfont_K$ and to the $(\varphi,\Gamma)$-modules over them.

\subsection{Construction of Lubin-Tate $(\phi,\Gamma)$-modules}
\label{ltpgm}

A $(\phi,\Gamma)$-module over $\bfont_K$ (or over $\bdag{}_K$ or over $\brig{}{,K}$) is a finite dimensional $\bfont_K$-vector space $\dfont$ (or a finite dimensional $\bdag{}_K$-vector space or a free $\brig{}{,K}$-module of finite rank respectively), along with a semilinear Frobenius map $\phiq$ whose matrix (in some basis) is invertible, and a continuous, semilinear action of $\Gamma_K$ that commutes with $\phiq$. 

We say that a $(\phi,\Gamma)$-module $\dfont$ over $\bfont_K$ is \'etale if $\dfont$ has a basis in which $\Mat(\phiq) \in \GL_d(\afont_K)$. Let $\bfont$ be the $p$-adic completion of $\cup_{M/F} \bfont_M$ where $M$ runs through the finite extensions of $F$. By specializing the constructions of \cite{F90}, Kisin and Ren prove the following theorem (theorem 1.6 of \cite{KR09}).

\begin{theo}\label{fontkisren}
The functors $V \mapsto \dfont(V) = (\bfont \otimes_F V)^{H_K}$ and $\dfont \mapsto (\bfont \otimes_{\bfont_K} \dfont)^{\phiq=1}$ give rise to mutually inverse equivalences of categories between the category of $F$-linear representations of $G_K$ and the category of \'etale $(\phi,\Gamma)$-modules over $\bfont_K$.
\end{theo}

We say that a $(\phi,\Gamma)$-module $\dfont$ is overconvergent if there exists a basis of $\dfont$ in which the matrices of $\phiq$ and of all $g \in \Gamma_K$ have entries in $\bdag{}_K$.  This basis then generates a $\bdag{}_K$-vector space $\ddag{}$ which is canonically attached to $\dfont$. If $V$ is a $p$-adic representation, we say that it is overconvergent if $\dfont(V)$ is overconvergent, and then $\ddag{}(V)$ denotes the corresponding $(\phi,\Gamma)$-module over $\bdag{}_K$. The main result of \cite{CC98} states that if $F=\Qp$, then every \'etale $(\phi,\Gamma)$-module over $\bfont_K$ is overconvergent (the proof is given for $\pi=p$, but it is easy to see that it works for any uniformizer). If $F \neq \Qp$,  some simple examples (see \cite{FX13}) show that this is no longer the case.

Recall that an $F$-linear representation of $G_K$ is $F$-analytic if $\Cp \otimes_F^\tau V$ is the trivial $\Cp$-semilinear representation of $G_K$ for all embeddings $\tau \neq \Id \in \Gal(F/\Qp)$. This definition is the natural generalization of Kisin and Ren's notion of $F$-crystalline representation. Kisin and Ren then show that if $K \subset F_\infty$, and if $V$ is a crystalline $F$-analytic representation of $G_K$,  the $(\phi,\Gamma)$-module attached to $V$ is overconvergent (see \S 3.3 of \cite{KR09}; they actually prove a stronger result, namely that the $(\phi,\Gamma)$-module attached to such a $V$ is of finite height).

If $\drig{}$ is a $(\phi,\Gamma)$-module over $\brig{}{,K}$, and if $g \in \Gamma_K$ is close enough to $1$, then by standard arguments (see \S 2.1 of \cite{KR09} or \S 1C of \cite{FX13}), the series $\log(g) = \log(1+(g-1))$ gives rise to a differential operator $\nabla_g : \drig{} \to \drig{}$. The map $v \mapsto \exp(v)$ is defined on a neighborhood of $0$ in $\mathrm{Lie}\,\Gamma_K$; the map $\mathrm{Lie}\,\Gamma_K \to \End(\drig{})$ arising from $v \mapsto \nabla_{\exp(v)}$ is $\Qp$-linear, and we say that $\drig{}$ is $F$-analytic if this map is $F$-linear (see \S 2.1 of \cite{KR09} and \S 1.3 of \cite{FX13}).

If $V$ is an overconvergent representation of $G_K$, we let $\drig{}(V) = \brig{}{,K} \otimes_{\bdag{}_K} \ddag{}(V)$. The following is theorem D of \cite{PGMLAV}.

\begin{theo}
\label{eqcat}
The functor $V \mapsto \drig{}(V)$ gives rise to an equivalence of categories between the category of $F$-analytic representations of $G_K$ and the category of \'etale $F$-analytic Lubin-Tate $(\phi,\Gamma)$-modules over $\brig{}{,K}$.
\end{theo}

In general, representations of $G_K$ that are not $F$-analytic are not overconvergent (see \S \ref{overpgm}), and the analogue of theorem \ref{eqcat} without the $F$-analyticity condition on both sides does not hold.

\subsection{Overconvergent Lubin-Tate $(\phi,\Gamma)$-modules}
\label{overpgm}

By theorem \ref{eqcat}, there is an equivalence of categories between the category of $F$-analytic representations of $G_K$ and the category of \'etale $F$-analytic Lubin-Tate $(\phi,\Gamma)$-modules over $\brig{}{,K}$. The purpose of this section is to prove a conjecture of Colmez that describes \emph{all} overconvergent representations of $G_K$. 

Any representation $V$ of $G_K$ that factors through $\Gamma_K$ is overconvergent, since $H_K$ acts trivially on $V$ so that $\dfont(V) = \bfont_K \otimes_F V$ and therefore $\dfont(V)$ has a basis in which $\Mat(\phiq)=\Id$ and $\Mat(g) \in \GL_d(\OO_F)$ if $g \in \Gamma_K$. If $X$ is $F$-analytic and $Y$ factors through $\Gamma_K$, $X \otimes_F Y$ is therefore overconvergent. We prove that any overconvergent representation of $G_K$ is a quotient (and therefore also a subobject, by dualizing) of some representation of the form $X \otimes_F Y$ as above. 

\begin{theo}
\label{varcolmsurj}
If $V$ is an overconvergent representation of $G_K$, there exists an $F$-analytic representation $X$ of $G_K$, a representation $Y$ of $G_K$ that factors through $\Gamma_K$, and a surjective $G_K$-equivariant map $X \otimes_F Y \to V$.
\end{theo}

\begin{proof}
Recall (see \S 3 of \cite{PGMLAV}) that if $r>0$, then inside $\brig{}{,K}$ we have the subring $\brig{,r}{,K}$ of elements defined on a fixed annulus whose inner radius depends on $r$ and whose outer raidus is $1$, and that $(\phi,\Gamma)$-modules over $\brig{}{,K}$ can be defined over $\brig{,r}{,K}$ if $r$ is large enough, giving us a module $\drig{,r}(V)$. We also have rings $\bfont^{[r;s]}_K$ of elements defined on a closed annulus whose radii depend on $r \leq s$. One can think of an element of $\brig{,r}{,K}$ as a compatible family of elements of $\{\bfont^I_K\}_I$ where $I$ runs over a set of closed intervals whose union is $[r;+\infty[$. In the rest of the proof, we use this principle of glueing objects defined on closed annuli to get an object on the annulus corresponding to $\brig{,r}{,K}$.

Choose $r>0$ large enough such that $\drig{,r}(V)$ is defined, and $s \geq qr$. Let $\dfont^{[r;s]}(V) = \bfont^{[r;s]}_K \otimes_{\brig{,r}{,K}} \drig{,r}(V)$. If $a \in \OO_F$, and if $\vp(a) \geq n$ for $n=n(r,s)$ large enough, the series $\exp(a \cdot \nabla)$ converges in the operator norm to an operator on the Banach space $\dfont^{[r;s]}(V)$. This way, we can define a twisted action of $\Gamma_{K_n}$ on $\dfont^{[r;s]}(V)$, by the formula $h \star x = \exp(\log_p(\chilt(h)) \cdot \nabla)(x)$. This action is now $F$-analytic by construction.

Since $s \geq qr$, the modules $\dfont^{[q^m r;q^m s]}(V)$ for $m \geq 0$ are glued together (using the idea explained above) by $\phiq$ and we get a new action of $\Gamma_{K_n}$ on $\drig{,r}(V) = \dfont^{[r;+\infty[}(V)$ and hence on $\drig{}(V)$. Since $\phiq$ is unchanged, this new $(\phi, \Gamma)$-module is \'etale, and therefore corresponds to a representation $W$ of $G_{K_n}$. The representation $W$ is $F$-analytic by theorem \ref{eqcat}, and its restriction to $H_K$ is isomorphic to $V$. 

Let $X = \ind_{G_{K_n}}^{G_K} W$. By Mackey's formula, $X {\mid_{H_K}}$ contains $W{\mid_{H_K}} \simeq V {\mid_{H_K}}$ as a direct summand. The space $Y = \Hom(\ind_{G_{K_n}}^{G_K} W, V)^{H_K}$ is therefore a nonzero representation of $\Gamma_K$, and there is an element $y \in Y$ whose image is $V$. The natural map $X \otimes_F Y \to V$ is therefore surjective. Finally, $X$ is $F$-analytic since $W$ is $F$-analytic.
\end{proof}

By dualizing, we get the following variant of theorem \ref{varcolmsurj}.

\begin{coro}
\label{varcolminj}
If $V$ is an overconvergent representation of $G_K$, there exists an $F$-analytic representation $X$ of $G_K$, a representation $Y$ of $G_K$ that factors through $\Gamma_K$, and an injective $G_K$-equivariant map $V \to X \otimes_F Y$.
\end{coro}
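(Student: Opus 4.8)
The goal is Corollary~\ref{varcolminj}, the dual statement to Theorem~\ref{varcolmsurj}, asserting that an overconvergent $V$ embeds $G_K$-equivariantly into $X \otimes_F Y$ with $X$ being $F$-analytic and $Y$ factoring through $\Gamma_K$. The plan is to obtain this purely formally from Theorem~\ref{varcolmsurj} by dualizing, using the self-duality of the two relevant classes of representations and the fact that forming $\Hom$ into $F$ (i.e. the $F$-linear dual) converts surjections into injections.

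First I would recall that the class of $F$-analytic representations is stable under taking $F$-linear duals: if $X$ is $F$-analytic then $\Cp \otimes_F^\tau X$ is trivial for every embedding $\tau \neq \Id$, and since $(\Cp \otimes_F^\tau X)^\vee \simeq \Cp \otimes_F^\tau (X^\vee)$ as $\Cp$-semilinear representations of $G_K$, triviality is preserved; hence $X^\vee$ is again $F$-analytic. Likewise, the class of representations that factor through $\Gamma_K$ is obviously stable under $F$-linear duality, since $H_K$ acts trivially on $Y$ iff it acts trivially on $Y^\vee$. These two observations are the only ``input'' beyond Theorem~\ref{varcolmsurj} that the argument needs.

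Next, apply Theorem~\ref{varcolmsurj} to the overconvergent representation $V^\vee$ (which is overconvergent because the category of overconvergent representations is stable under duality, the dual $(\phi,\Gamma)$-module being built from the inverse-transpose matrices, which again have entries in $\bdag{}_K$). This produces an $F$-analytic $X_0$, a representation $Y_0$ factoring through $\Gamma_K$, and a surjection $X_0 \otimes_F Y_0 \twoheadrightarrow V^\vee$ of $F$-linear representations of $G_K$. Taking $F$-linear duals of this surjection of finite-dimensional $F$-vector spaces yields an injection $V \hookrightarrow (X_0 \otimes_F Y_0)^\vee \simeq X_0^\vee \otimes_F Y_0^\vee$, which is $G_K$-equivariant because the dual of a $G_K$-equivariant map is $G_K$-equivariant. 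Setting $X = X_0^\vee$ and $Y = Y_0^\vee$, the first observation makes $X$ $F$-analytic and $Y$ factors through $\Gamma_K$, so $V \hookrightarrow X \otimes_F Y$ is the desired embedding.

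There is essentially no obstacle here: the only mild point to check is that all three categories in play (overconvergent, $F$-analytic, factoring through $\Gamma_K$) are stable under $F$-linear duality, and that duality exchanges surjections and injections of $G_K$-representations — both of which are routine. Since the corollary is stated as an immediate consequence (``By dualizing''), in the write-up it suffices to invoke Theorem~\ref{varcolmsurj} applied to $V^\vee$ and dualize, noting the stability of the $F$-analyticity condition and of the condition of factoring through $\Gamma_K$ under passage to the dual.
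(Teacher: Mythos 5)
Your proof is correct and is exactly the argument the paper intends by the phrase ``By dualizing'': apply Theorem~\ref{varcolmsurj} to $V^\vee$, dualize the resulting surjection, and observe that overconvergence, $F$-analyticity, and factoring through $\Gamma_K$ are each stable under $F$-linear duality. The paper gives no further details, so your write-up simply makes explicit what is left implicit there.
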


\subsection{Extensions of $(\phi,\Gamma)$-modules}
\label{extpgm}

In this section, we prove that there are no non-trivial extensions between an $F$-analytic $(\phi,\Gamma)$-module and the twist of an $F$-analytic $(\phi,\Gamma)$-module by a character that is not  $F$-analytic. This is not used in the rest of the paper, but is of independent interest.

If $\delta\colon \Gamma_K \to \OO_F^\times$ is a continuous character, and 
$g \in \Gamma_K$, let $w_\delta(g) = \log \delta(g) / \log \chilt(g)$. 
Note that $\delta$ is $F$-analytic if and only if $w_\delta(g)$ is 
independent of $g \in \Gamma_K$. 

We define the first cohomology group $\cH^1(\dfont)$ of a $(\phi,\Gamma)$-module $\dfont$ as in \S 4 
of \cite{FX13}. Let $\dfont$ be a $(\phi,\Gamma)$-module over $\brig{}{,K}$. 
Let $G$ denote the semigroup $\phiq^{\ZZ_{\geq 0}} \times \Gamma_K$ and let 
$\cZ^1(\dfont)$ denote the set of continuous functions 
$f\colon G \to \dfont$ such that $(h-1)f(g) = (g-1)f(h)$ for all 
$g,h \in G$. Let $\cB^1(\dfont)$ be the subset of $\cZ^1(\dfont)$ consisting 
of functions of the form $g \mapsto (g-1)y$, $y \in D$ and let 
$\cH^1(\dfont) = \cZ^1(\dfont) / \cB^1(\dfont)$. If $g \in G$ and $f \in\cZ^1$, then 
$[h \mapsto (g-1)f(h)] = [h \mapsto (h-1)f(g)] \in \cB^1$. The natural 
actions of $\Gamma_K$ and $\phiq$ on $\cH^1$ are therefore trivial.

If $\dfont_0$ and $\dfont_1$ are two $(\phi,\Gamma)$-modules, then $\Hom(\dfont_1, \dfont_0)= \Hom_\textrm{$\brig{}{,K}$-mod}(\dfont_1, \dfont_0)$ is a free $\brig{}{,K}$-module of rank $\operatorname{rk}(\dfont_0)\operatorname{rk}(\dfont_1)$ which is easily seen to be
itself a $(\phi,\Gamma)$-module. The space
$\cH^1(\Hom(\dfont_1, \dfont_0))$ 
classifies the extensions of $\dfont_1$ by $\dfont_0$. More precisely, 
if $\dfont$ is such an extension and if $s\colon \dfont_1 \rightarrow \dfont$ 
is a $\brig{}{,K}$-linear map that is a section of the projection 
$\dfont \rightarrow \dfont_1$, then $g \mapsto s - g(s)$ is a cocycle on $G$ 
with values in $\Hom(\dfont_1, \dfont_0)$ 
(the element $g(s) \in \Hom(\dfont_1, \dfont)$ being defined 
by $g(s)(g(x)) = g(s(x))$ for all $g \in G$ and all $x \in \dfont_1$). The 
class of this cocycle in the quotient 
$\cH^1(\Hom(\dfont_1, \dfont_0))$ 
does not depend on the choice of the section~$s$, and every such class 
defines a unique extension of $\dfont_1$ by $\dfont_0$ up to isomorphism.

\begin{theo}
\label{noext}
If $\dfont$ is an $F$-analytic $(\phi,\Gamma)$-module, and if 
$\delta\colon \Gamma_K \to \OO_F^\times$ is \emph{not} locally 
$F$-analytic, then $\cH^1(\dfont(\delta))=\{0\}$.
\end{theo}

\begin{proof}
If $g \in \Gamma_K$ and $x(\delta) \in \dfont(\delta)$ with $x \in \dfont$, we have 
\[
\nabla_g(x(\delta)) = \nabla(x)(\delta)  + w_\delta(g) \cdot x(\delta).
\]
If $g,h \in \Gamma_K$, this implies that 
$\nabla_g(x(\delta))-\nabla_h(x(\delta)) = (w_\delta(g)-w_\delta(h)) \cdot x(\delta)$. 
If $\overline{f} \in \cH^1(\dfont(\delta))$ and $g \in \Gamma_K$, 
then $g(\overline{f}) = \overline{f}$ and therefore 
$\nabla_g(\overline{f}) = 0$. The formula above shows that if 
$k \in \Gamma_K$, then 
$\nabla_g(f(k))-\nabla_h(f(k)) = (w_\delta(g)-w_\delta(h)) \cdot f(k)$, 
so that 
$0 = (\nabla_g-\nabla_h)(\overline{f}) = (w_\delta(g)-w_\delta(h)) \cdot \overline{f}$, 
and therefore $\overline{f} = 0$ if $\delta$ is not locally analytic.
\end{proof}

\section{Analytic cohomology and Iwasawa theory}
\label{ancohiw}

In this chapter, we explain how to construct classes in the cohomology groups of $F$-analytic $(\phi,\Gamma)$-modules. This allows us to define our maps $h^1_{K_n,V}$.

\subsection{Analytic cohomology}
\label{cohan}

Let $G$ be an $F$-analytic semigroup and let $M$ be a Fr\'echet or LF space with a pro-$F$-analytic (\S 2 of \cite{PGMLAV}) action of $G$. Recall that this means that we can write $M = \varinjlim_i \varprojlim_j M_{ij}$ where $M_{ij}$ is a Banach space with a locally analytic action of $G$. A function $f : G \to M$ is said to be pro-$F$-analytic if its image lies in $\varprojlim_j M_{ij}$ for some $i$ and if the corresponding function $f : G \to M_{ij}$ is locally $F$-analytic for all $j$.

The analytic cohomology groups $\Han^i(G,M)$ are defined and studied in \S 4 of \cite{FX13} and \S 5 of \cite{LT}. In particular, we have $\Han^0(G,M) = M^G$ and $\Han^1(G,M) = \Zan^1(G,M)/\Ban^1(G,M)$ where $\Zan^1(G,M)$ is the set of pro-$F$-analytic functions $f : G \to M$ such that $(g-1)f(h)=(h-1)f(g)$ for all $g,h \in G$ and $\Ban^1(G,M)$ is the set of functions of the form $g \mapsto (g-1)m$.

Let $M$ be a Fr\'echet space, and write $M = \projlim_n M_n$ with $M_n$ a Banach space such that the image of $M_{n+j}$ in $M_n$ is dense for all $j \geq 0$. 

\begin{prop}\label{h1frech}
We have $\Han^1(G,M) = \projlim_n \Han^1(G,M_n)$.
\end{prop}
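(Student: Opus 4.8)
The plan is to compute $\Han^1(G,M)$ directly from its definition as $\Zan^1(G,M)/\Ban^1(G,M)$ and identify the cocycle and coboundary spaces with the corresponding inverse limits. Since $M = \projlim_n M_n$, a pro-$F$-analytic function $f : G \to M$ is the same as a compatible family of pro-$F$-analytic functions $f_n : G \to M_n$, so $\Zan^1(G,M) = \projlim_n \Zan^1(G,M_n)$ essentially by definition (the cocycle relation $(g-1)f(h)=(h-1)f(g)$ is tested componentwise, and a limit of cocycles is a cocycle). Likewise a coboundary $g \mapsto (g-1)m$ for $m \in M$ maps to the compatible family of coboundaries $g \mapsto (g-1)m_n$, giving a map $\Ban^1(G,M) \to \projlim_n \Ban^1(G,M_n)$. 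There is thus a natural map $\Han^1(G,M) \to \projlim_n \Han^1(G,M_n)$, and the content of the proposition is that it is an isomorphism.

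The key tool will be the exact sequence of inverse limits: for each $n$ we have $0 \to \Ban^1(G,M_n) \to \Zan^1(G,M_n) \to \Han^1(G,M_n) \to 0$, and passing to $\projlim_n$ gives a six-term sequence involving $\projlim^1$. So I would first argue that $\projlim^1_n \Ban^1(G,M_n) = 0$. This is where the density hypothesis enters: $\Ban^1(G,M_n)$ is a quotient of $M_n$ (by the invariants $M_n^G$), and the transition maps $M_{n+1} \to M_n$ have dense image, so the induced maps on $\Ban^1$ also have dense image; an inverse system of Banach spaces (or of complete metrizable spaces) with dense transition maps satisfies the Mittag-Leffler condition, hence has vanishing $\projlim^1$. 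Granting this, the $\projlim^1$ term drops out and we get $\Han^1(G,M) \cong \Zan^1(G,M)/\Ban^1(G,M)$ surjecting onto $\projlim_n \Han^1(G,M_n)$ with kernel measured by whether $\projlim_n \Ban^1(G,M_n)$ equals the image of $\Ban^1(G,M)$ — but since $\Zan^1(G,M) = \projlim_n \Zan^1(G,M_n)$ and $\Ban^1(G,M) \subset \Zan^1(G,M)$, and the quotient $\projlim_n \Zan^1 / \projlim_n \Ban^1$ injects into $\projlim_n(\Zan^1/\Ban^1)$ with the $\projlim^1$-vanishing giving surjectivity, the two agree.

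The main obstacle, and the step requiring care, is the vanishing of $\projlim^1_n \Ban^1(G,M_n)$: one must check that $\Ban^1(G,M_n) \cong M_n / M_n^G$ is a reasonable enough topological object (a quotient of a Banach space, hence complete metrizable when the subspace $M_n^G$ is closed — which it is, being the intersection of kernels of the continuous maps $m \mapsto (g-1)m$) and that density of $M_{n+1} \to M_n$ genuinely passes to these quotients and yields Mittag-Leffler. A secondary subtlety is making sure that "pro-$F$-analytic function valued in $M$" really does decompose as a compatible family valued in the $M_n$; this is where one invokes the description $M = \varinjlim \varprojlim M_{ij}$ from \S 2 of \cite{PGMLAV}, but since $M$ is assumed Fréchet the $\varinjlim$ is trivial and one is reduced to the statement that a function into $\projlim_n M_n$ is locally $F$-analytic iff each component is, which is immediate from the definition of the Fréchet topology. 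I would also remark that the argument is the analytic-cohomology analogue of the standard fact that continuous group cohomology of a Fréchet module is the inverse limit of the cohomologies of its Banach quotients, under a Mittag-Leffler hypothesis.
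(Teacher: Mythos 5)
Your proposal is correct and follows essentially the same route as the paper: both compute $\Han^1$ from the short exact sequence $0 \to \Ban^1(G,M_n) \to \Zan^1(G,M_n) \to \Han^1(G,M_n) \to 0$, identify $\Zan^1(G,M)$ with $\projlim_n \Zan^1(G,M_n)$, and kill the $\projlim^1$ obstruction by applying topological Mittag--Leffler to $\Ban^1(G,M_n) = M_n/M_n^G$ using the density of the transition maps. One small remark: your closing paragraph hedges on whether $\Ban^1(G,M) \to \projlim_n \Ban^1(G,M_n)$ is onto (rather than just injective), and indeed your final sentence does not quite settle it; the paper's own proof simply asserts this identification as clear, so you have in fact isolated the one genuinely delicate point that the published argument glosses over.
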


\begin{proof}
By definition, we have an exact sequence 
\[ 0 \to \Ban^1(G,M_n) \to \Zan^1(G,M_n) \to \Han^1(G,M_n) \to 0. \] 
It is clear that $\Ban^1(G,M) = \projlim_n \Ban^1(G,M_n)$ and that $\Zan^1(G,M) = \projlim_n \Zan^1(G,M_n)$, since these spaces are spaces of functions on $G$ satisfying certain compatible conditions.  The Banach spaces $\Ban^1(G,M_n)$ satisfy the Mittag-Leffler condition: $\Ban^1(G,M_n)=M_n/M_n^G$ and the image of $M_{n+j}$ in $M_n$ is dense for all $j \geq 0$. This implies that the sequence
\[ 0 \to \projlim_n \Ban^1(G,M_n) \to \projlim_n \Zan^1(G,M_n) \to \projlim_n \Han^1(G,M_n) \to 0 \]
is exact, and the proposition follows.
\end{proof}

In this paper, we mainly use the semigroups $\Gamma_K$, $\Gamma_K \times \Phi$ where $\Phi = \{ \phiq^n$, $n \in \ZZ_{\geq 0}\}$ and $\Gamma_K \times \Psi$ where $\Psi = \{ \psiq^n$, $n \in \ZZ_{\geq 0}\}$. The semigroups $\Phi$ and $\Psi$ are discrete and the $F$-analytic structure comes from the one on $\Gamma_K$.

\begin{defi}
\label{defcor}
Let $G$ be a compact group and let $H$ be an open subgroup of $G$. We have the \emph{corestriction} map $\cor : \Han^1(H,M) \to \Han^1(G,M)$, which satisfies $\cor \circ \res = [G:H]$. This map has the following equivalent explicit descriptions (see \S 2.5 of \cite{SCG} and \S II.2 of \cite{CC99}). Let $X \subset G$ be a set of representatives of $G/H$ and let $f \in \Zan^1(H,M)$ be a cocycle.
\begin{enumerate}
\item By Shapiro's lemma, $\Han^1(H,M) = \Han^1(G,\ind_H^G M)$ and $\cor$ is the map induced by $i \mapsto \sum_{x \in X} x \cdot i(x^{-1})$;
\item if $M \subset N$ where $N$ is a $G$-module and if there exists $n \in N$ such that $f(h)=(h-1)(n)$, then $\cor(f)(g)=(g-1)(\sum_{x \in X} xn)$;
\item if $g \in G$, let $\tau_g : X \to X$ be the permutation defined by $\tau_g(x)H=gxH$. We have $\cor(f)(g) = \sum_{x \in X} \tau_g(x) \cdot f( \tau_g(x)^{-1} gx)$.
\end{enumerate}
\end{defi}

If $g \in \Gamma_K$, let $\ell(g) = \log_p \chilt(g)$. If $M$ is a Fr\'echet space with a pro-$F$-analytic action of $\Gamma_K$ and if $g \in \Gamma_K$ is such that $\chilt(g) \in 1+2p \OO_F$, then $\lim_{n \to \infty} (g^{p^n}-1)/(p^n \ell(g))$ converges to an operator $\nabla$ on $M$, which is independent of $g$ thanks to the $F$-analyticity assumption. If $c : \Gamma_K \to M$ is an $F$-analytic map, let $c'(1)$ denote its derivative at the identity.

\begin{prop}\label{lazcor}
If $M$ is a Fr\'echet space with a pro-$F$-analytic action of $\Gamma_K$, the map $c \mapsto c'(1)$ induces an isomorphism $\Han^1(\Gamma_K,M) = (M/\nabla M)^{\Gamma_K}$, under which $\cor_{L/K}$ corresponds to $\Tr_{L/K}$.
\end{prop}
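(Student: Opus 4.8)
The plan is to produce the isomorphism via the explicit map $\lambda\colon c\mapsto c'(1)\bmod\nabla M$, verify it is well defined, prove bijectivity after reducing to a torsion-free $\Gamma_K$, and read off the behaviour under $\cor$ from the formulas of Definition~\ref{defcor}. First I would reduce to $M$ Banach: by Proposition~\ref{h1frech} we have $\Han^1(\Gamma_K,M)=\projlim_n\Han^1(\Gamma_K,M_n)$, and since $\nabla$ is a continuous operator commuting with $\Gamma_K$ and the maps $M_{n+j}\to M_n$ have dense image, the system $(\nabla M_n)_n$ is Mittag--Leffler, so $(M/\nabla M)^{\Gamma_K}=\projlim_n(M_n/\nabla M_n)^{\Gamma_K}$; as $c\mapsto c'(1)$ is compatible with all transition maps it suffices to treat $M$ Banach with a locally $F$-analytic action. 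For such $M$ and $c\in\Zan^1(\Gamma_K,M)$, set $c'(1)=\lim_k c(g^{p^k})/(p^k\ell(g))$ for any $g$ with $\chilt(g)\in 1+2p\OO_F$; $F$-analyticity of $c$ makes this independent of $g$, exactly as for $\nabla$. If $c(g)=(g-1)m$ is a coboundary the same limit is $\nabla m$, so coboundaries land in $\nabla M$; and the cocycle relation, in the form $(\sigma-1)c(h)=(h-1)c(\sigma)$, differentiated in $h$ at the identity, gives $(\sigma-1)c'(1)=\nabla(c(\sigma))\in\nabla M$ for all $\sigma\in\Gamma_K$, so $c'(1)$ is $\Gamma_K$-invariant modulo $\nabla M$. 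This defines $\lambda\colon\Han^1(\Gamma_K,M)\to(M/\nabla M)^{\Gamma_K}$.

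To prove $\lambda$ bijective I would pass to a torsion-free group. Choose $m\geq n(K)$ large enough that $\Gamma_1:=\Gamma_{K_m}$ is torsion-free and that, for the given Banach space $M$, the operator series $\exp(\ell(g)\nabla)$ converges for every $g\in\Gamma_1$ (a constraint only on $\|\nabla\|$ against $|\ell(g)|$). Since $\Gamma_1\trianglelefteq\Gamma_K$ with finite quotient $\Gamma_K/\Gamma_1=\Gal(K_m/K)$ and $[\Gamma_K:\Gamma_1]$ is invertible in the $\Qp$-vector space $M$, the relation $\cor\circ\res=[\Gamma_K:\Gamma_1]$ of Definition~\ref{defcor} shows that restriction identifies $\Han^1(\Gamma_K,M)$ with $\Han^1(\Gamma_1,M)^{\Gamma_K/\Gamma_1}$, compatibly with $\lambda$; note the conjugation action is trivial as $\Gamma_K$ is abelian. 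Moreover, because $\ell\colon\Gamma_1\to(\pi^m\OO_F,+)$ is a group isomorphism and $g-1=\nabla\circ\sum_{j\geq1}\ell(g)^j\nabla^{j-1}/j!$ on $M$ for $g\in\Gamma_1$, the group $\Gamma_1$ acts trivially on $M/\nabla M$, whence $(M/\nabla M)^{\Gamma_K}=(M/\nabla M)^{\Gamma_K/\Gamma_1}$. It therefore suffices to prove that $\lambda\colon\Han^1(\Gamma_1,M)\to M/\nabla M$ is an isomorphism.

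For surjectivity, given $m_0\in M$ the function $c(g)=\sum_{j\geq1}\ell(g)^j\nabla^{j-1}(m_0)/j!$ is an $F$-analytic cocycle (one checks $c(gh)=c(g)+g\,c(h)$ using that $g$ commutes with $\nabla$ and that $\nabla$ times this series equals $g-1$) with $c'(1)=m_0$. For injectivity, if $c'(1)=\nabla m_0$, then after subtracting the coboundary $g\mapsto(g-1)m_0$ we may assume $c'(1)=0$; identifying $\Gamma_1$ with the disc $\pi^m\OO_F$ via $\ell$, the $F$-analytic function $c$ is given by a single power series in $\ell$, and differentiating the cocycle relation $c(g g_t)=c(g)+g\,c(g_t)$ at $t=0$ shows the derivative of this power series is $g\cdot c'(1)=0$ at every point of $\pi^m\OO_F$, so the series is constant, equal to $c(1)=0$, and $c\equiv0$. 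Hence $\lambda$ is an isomorphism. Finally, for the corestriction statement, using Shapiro's lemma $\Han^1(\Gamma_L,M)=\Han^1(\Gamma_K,\ind_{\Gamma_L}^{\Gamma_K}M)$ together with the description $\cor([i])=[\sigma\mapsto\sum_x x\cdot i(x^{-1})]$ of Definition~\ref{defcor}(1), $x$ running over representatives of $\Gamma_K/\Gamma_L$, a direct computation of derivatives at the identity gives $\lambda(\cor_{L/K}([c]))=\sum_x x\cdot\lambda([c])$, i.e.\ $\cor_{L/K}$ corresponds to $\Tr_{L/K}=\sum_{x\in\Gamma_K/\Gamma_L}x$ on $(M/\nabla M)^{\Gamma_L}\to(M/\nabla M)^{\Gamma_K}$.

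The step I expect to be the main obstacle is the rigidity used for injectivity: that an $F$-analytic cocycle on the torsion-free group $\Gamma_1$ with vanishing derivative at the identity is identically zero. This is precisely where one-dimensionality of the $F$-analytic structure enters essentially — a power series in the single variable $\ell$ with everywhere-vanishing derivative is constant — and is also why the statement fails without $F$-analyticity of the $\Gamma_K$-action: a general $p$-adic analytic action would contribute all $[F:\Qp]$ directions rather than one. A minor point to track is that the subgroup $\Gamma_1$ was chosen depending on $M$ through $\|\nabla\|$, which is harmless since shrinking $\Gamma_1$ inside $\Gamma_K$ leaves $\Han^1(\Gamma_1,M)^{\Gamma_K/\Gamma_1}$ unchanged.
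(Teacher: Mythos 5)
Your plan follows the paper's quite closely (reduce to Banach, produce the explicit cocycle $g\mapsto(\exp(\ell(g)\nabla)-1)\nabla^{-1}m_0$ for surjectivity, identify $\cor$ with $\Tr$), but the injectivity step has a real gap. After reducing to a cocycle $c$ on $\Gamma_1$ with $c'(1)=0$, you assert that $c$ ``is given by a single power series in $\ell$'' on all of $\pi^m\OO_F$. This is not a consequence of $c\in\Zan^1$: pro-$F$-analyticity only gives a power-series expansion on a sufficiently small disc around each point, and your $\Gamma_1$ was sized so that the group \emph{action} $g\mapsto\exp(\ell(g)\nabla)$ is globally analytic, with no control on the radius of convergence of $c$ itself. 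From $c'\equiv0$ (which you do establish, correctly, from the cocycle relation) you can therefore only conclude that $c$ is locally constant near $1$, i.e.\ vanishes on some possibly smaller open subgroup $\Gamma_L\subset\Gamma_1$. The conclusion $[c]=0$ is still true, but one has to close the gap as the paper does: $\res_{\Gamma_L}c=0$ and $\cor\circ\res=[\Gamma_1:\Gamma_L]$ force $[\Gamma_1:\Gamma_L][c]=0$, hence $[c]=0$ since the index is a unit in $\Qp$. You already invoke exactly this $\cor\circ\res$ device in your reduction to $\Gamma_1$, so the repair is short, but as written the step proves less than it claims; your closing diagnosis that the crux is one-dimensionality of the $F$-analytic structure misses that the remaining obstruction is local-versus-global $p$-adic analyticity, resolved by corestriction rather than by any feature of $\nabla$.

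Two smaller points. The claim $(M/\nabla M)^{\Gamma_K}=\projlim_n(M_n/\nabla M_n)^{\Gamma_K}$ via Mittag--Leffler for $(\nabla M_n)$ is asserted a bit loosely: $\nabla M_n$ need not be closed and one must also check injectivity of $M/\nabla M\to\projlim M_n/\nabla M_n$. The paper sidesteps this by constructing the inverse map as the composite $(M/\nabla M)^{\Gamma_K}\to\projlim_n(M_n/\nabla M_n)^{\Gamma_K}\to\projlim_n\Han^1(\Gamma_K,M_n)\to\Han^1(\Gamma_K,M)$ and verifying directly that the two composites are the identity, without ever needing to identify the two source spaces a priori. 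For the corestriction formula you go through Shapiro's lemma (Definition~\ref{defcor}(1)); the paper uses the permutation description (Definition~\ref{defcor}(3)), which for $h\in\Gamma_L$ gives $\tau_h=\mathrm{id}$, hence $\cor(c)(h)=\sum_x x\cdot c(h)$ and $\cor(c)'(1)=\Tr_{L/K}c'(1)$ at once — a somewhat more direct route to the same identity.
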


\begin{proof}
Assume for the time being that $M$ is a Banach space. We first show that the map induced by $c \mapsto c'(1)$ is well-defined and lands in $(M/\nabla M)^{\Gamma_K}$. The map $c \mapsto c'(1)$ from $\Zan^1(\Gamma_K,M) \to M$ is well-defined, and if $c(g)=(g-1)m$, then $c'(1) = \nabla m$ so that there is a well-defined map $\Han^1(\Gamma_K,M)  \to M/\nabla M$. If $h \in \Gamma_K$ then $(h-1)c'(1) = \lim_{g \to 1} (h-1)c(g)/\ell(g) = \lim_{g \to 1} (g-1)c(h)/\ell(g) = \nabla c(h)$ so that the image of $c \mapsto c'(1)$ lies in $(M/\nabla M)^{\Gamma_K}$.

The formula for the corestriction follows from the explicit descriptions above: if $h \in \Gamma_L$ then $\tau_h(x) = x$ so that  $\cor(c)(h) = \sum_{x \in X} x \cdot c(h)$ and 
\[ \cor(c)'(1) = \lim_{h \to 1} \cor(c)(h) / \ell(h) = \sum_{x \in X} x \cdot c'(1) = \Tr_{L/K} (c'(1)). \]

We now show that the map is injective. If $c'(1) = \nabla m$, then the derivative of $g \mapsto c(g)-(g-1)m$ at $g=1$ is zero and hence $c(g)=(g-1)m$ on some open subgroup $\Gamma_L$ of $\Gamma_K$ and $c = [L:K]^{-1} \cor_{L/K} \circ \res_{K/L} (c) = 0$.

We finally show that the map is surjective. Suppose now that $y \in (M/\nabla M)^{\Gamma_K}$. The formula $g \mapsto (\exp(\ell(g)\nabla)-1)/\nabla \cdot y$ defines an analytic cocycle $c_L$ on some open subgroup $\Gamma_L$ of $\Gamma_K$. The image of $[L:K]^{-1} c_L$ under $\cor_{L/K}$ gives a cocyle $c \in \Han^1(\Gamma_K,M)$ such that $c'(1) = y$.

We now let $M = \projlim_n M_n$ be a Fr\'echet space. The map $\Han^1(\Gamma_K,M) \to (M/ \nabla M)^{\Gamma_K}$ induced by $c \mapsto c'(1)$ is well-defined, and in the other direction we have the map $y \mapsto c_y$:
\[ (M/ \nabla M)^{\Gamma_K} \to \projlim_n (M_n/ \nabla M_n)^{\Gamma_K} \to \projlim_n \Han^1(\Gamma_K,M_n) \to \Han^1(\Gamma_K,M). \]
These two maps are inverses of each other.
\end{proof}

\begin{rema}\label{liecohlaz}
Compare with the following theorem (see \cite{GT}, corollary 21): if $G$ is a compact $p$-adic Lie group and if $M$ is a locally analytic representation of $G$, then $\Han^i(G,M) = \cH^i(\mathrm{Lie}(G),M)^G$.
\end{rema}

\subsection{Cohomology of $F$-analytic $(\phi,\Gamma)$-modules}
\label{pgmcoh}

If $V$ is an $F$-analytic representation, let $\Han^1(K,V) \subset \cH^1(K,V)$ classify the $F$-analytic extensions of $F$ by $V$. Let $\dfont$ denote an $F$-analytic $(\phi,\Gamma)$-module over $\brig{}{,K}$, such as $\drig{}(V)$.

\begin{prop}\label{h1anextv}
If $V$ is $F$-analytic, then $\Han^1(K,V) = \Han^1(\Gamma_K \times \Phi, \drig{}(V))$.
\end{prop}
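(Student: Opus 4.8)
The plan is to observe that each side of the asserted equality computes an $\mathrm{Ext}^1$ group, and that the equivalence of categories of theorem \ref{eqcat} matches the two. By definition $\cH^1(K,V) = \mathrm{Ext}^1_{G_K}(F,V)$ in the category of $F$-linear representations of $G_K$, and $\Han^1(K,V)$ is the subgroup of the classes of extensions $0 \to V \to W \to F \to 0$ for which $W$ is $F$-analytic. The category of $F$-analytic representations is abelian and stable under sub-objects and quotients, and such a short exact sequence lies in it exactly when its middle term does; hence $\Han^1(K,V)$ is $\mathrm{Ext}^1(F,V)$ computed inside the category of $F$-analytic representations. Since the functor $W \mapsto \drig{}(W)$ of theorem \ref{eqcat} is an exact equivalence onto the category of \'etale $F$-analytic $(\phi,\Gamma)$-modules over $\brig{}{,K}$, and since it sends $F$ to $\brig{}{,K}$, it identifies $\Han^1(K,V)$ with $\mathrm{Ext}^1(\brig{}{,K},\drig{}(V))$ computed in that category.

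Next I would unwind the right-hand side. Applying the discussion of \S\ref{extpgm} with $\dfont_1 = \brig{}{,K}$ and $\dfont_0 = \drig{}(V)$, and using the identification $\Hom(\brig{}{,K},\drig{}(V)) = \drig{}(V)$ of $(\phi,\Gamma)$-modules, the group $\cH^1(\drig{}(V))$ classifies the extensions $0 \to \drig{}(V) \to \dfont \to \brig{}{,K} \to 0$ of $(\phi,\Gamma)$-modules over $\brig{}{,K}$; a class is represented, after a choice of $\brig{}{,K}$-linear splitting $s$ of $\dfont \to \brig{}{,K}$, by the cocycle $g \mapsto (1-g)(\tilde{1})$ on $\Gamma_K \times \Phi$, where $\tilde{1} = s(1) \in \dfont$ lifts $1$. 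Every such $\dfont$ is automatically \'etale, being an extension of $(\phi,\Gamma)$-modules over $\brig{}{,K}$ that are pure of slope zero (by slope theory). Moreover $\Ban^1(\Gamma_K \times \Phi, \drig{}(V))$ and $\cB^1(\drig{}(V))$ are literally the same set of functions on $\Gamma_K \times \Phi$, while $\Zan^1(\Gamma_K \times \Phi, \drig{}(V)) = \cZ^1(\drig{}(V)) \cap \{\text{pro-$F$-analytic functions}\}$; hence the natural map $\Han^1(\Gamma_K \times \Phi, \drig{}(V)) \to \cH^1(\drig{}(V))$ is injective, with image the classes admitting a pro-$F$-analytic cocycle representative.

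It then remains to check that this image is exactly the subset of $F$-analytic extensions. If the cocycle $g \mapsto (1-g)(\tilde{1})$ is pro-$F$-analytic, then so is $g \mapsto g(\tilde{1})$; writing $\dfont = \drig{}(V) \oplus \brig{}{,K}\cdot \tilde{1}$ as a $\brig{}{,K}$-module via $s$, and using that $\Gamma_K$ acts pro-$F$-analytically on $\drig{}(V)$, we find that the whole action of $\Gamma_K$ on $\dfont$ is pro-$F$-analytic, so the Lie algebra of $\Gamma_K$ acts $F$-linearly on $\dfont$ and $\dfont$ is $F$-analytic. Conversely, if $\dfont$ is $F$-analytic then its $\Gamma_K$-action is pro-$F$-analytic (\cite{PGMLAV}, \cite{FX13}), so the cocycle, being built from this action and a fixed lift $\tilde{1}$, is pro-$F$-analytic. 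Therefore $\Han^1(\Gamma_K \times \Phi, \drig{}(V))$ is identified with $\mathrm{Ext}^1(\brig{}{,K},\drig{}(V))$ in the category of \'etale $F$-analytic $(\phi,\Gamma)$-modules over $\brig{}{,K}$, and combining with the first paragraph yields $\Han^1(K,V) = \Han^1(\Gamma_K \times \Phi, \drig{}(V))$.

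I expect the main obstacle to be this last step: identifying ``$\dfont$ is an $F$-analytic $(\phi,\Gamma)$-module'' with ``its associated cocycle is pro-$F$-analytic'', equivalently checking that the analytic cohomology of $\Gamma_K \times \Phi$ computes $\mathrm{Ext}^1$ in the $F$-analytic category and nothing larger. This comes down to the compatibility between the Lie-algebraic definition of $F$-analyticity of a $(\phi,\Gamma)$-module and the pro-$F$-analyticity of its $\Gamma_K$-action, for which I would appeal to the framework of \cite{PGMLAV} and \cite{FX13}.
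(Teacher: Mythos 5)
Your proof is correct and takes the same route as the paper, which simply asserts in one sentence that $\Han^1(\Gamma_K\times\Phi,\drig{}(V))$ classifies $F$-analytic extensions of $\brig{}{,K}$ by $\drig{}(V)$ and then invokes theorem~\ref{eqcat}; you have carefully unwound exactly those two steps, and correctly identified the only point requiring real care (pro-$F$-analyticity of the cocycle $\Leftrightarrow$ $F$-analyticity of the extension module).
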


\begin{proof}
The group $\Han^1(\Gamma_K \times \Phi, \drig{}(V))$ classifies the $F$-analytic extensions of $\brig{}{,K}$ by $\drig{}(V)$, which correspond to $F$-analytic extensions of $F$ by $V$ by theorem \ref{eqcat}.
\end{proof}

\begin{theo}\label{colpsizero}
If $\dfont$ is an $F$-analytic $(\phi,\Gamma)$-module over $\brig{}{,K}$ and $i  = 0,1$, then $\Han^i(\Gamma_K,\dfont^{\psiq=0}) = 0$.
\end{theo}

\begin{proof}
Since $\brig{}{,F} \subset \brig{}{,K}$, the $\brig{}{,K}$-module $\dfont$ is a free $\brig{}{,F}$-module of finite rank. Let $\calR_F$ denote $\brig{}{,F}$ and let $\calR_{\Cp}$ denote $\Cp \widehat{\otimes}_F \brig{}{,F}$ the Robba ring with coefficients in $\Cp$. There is an action of $G_F$ on the coefficients of $\calR_{\Cp}$ and $\calR_{\Cp}^{G_F}= \calR_F$. 

Theorem 5.5 of \cite{LT} says that $\Han^i(\Gamma_K,(\calR_{\Cp} \otimes_{\calR_F} \dfont)^{\psiq=0}) = 0$. For $i=0$, this implies our claim. For $i=1$, it says that if $c : \Gamma_K \to \dfont^{\psiq=0}$ is an $F$-analytic cocycle, there exists $m \in (\calR_{\Cp} \otimes_{\calR_F} \dfont)^{\psiq=0}$ such that $c(g)=(g-1)m$ for all $g \in \Gamma_K$. If $\alpha \in G_F$, then $c(g)=(g-1)\alpha(m)$ as well, so that $\alpha(m)-m \in ((\calR_{\Cp} \otimes_{\calR_F} \dfont)^{\psiq=0})^{\Gamma_K} = 0$. This shows that $m \in ((\calR_{\Cp} \otimes_{\calR_F} \dfont)^{\psiq=0})^{G_F} = \dfont^{\psiq=0}$.
\end{proof}

\begin{coro}\label{cohphipsi}
The groups $\Han^i(\Gamma_K \times \Phi, \dfont)$ and $\Han^i(\Gamma_K \times \Psi, \dfont)$ are isomorphic for $i  = 0,1$.
\end{coro}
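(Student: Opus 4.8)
The plan is to deduce the corollary from Theorem \ref{colpsizero} by means of the natural comparison between the semigroups $\Gamma_K \times \Phi$ and $\Gamma_K \times \Psi$ acting on $\dfont$. Since $\psiq$ is a left inverse of $\phiq$ on $\dfont$, we have the $\Gamma_K$-stable decomposition $\dfont = \phiq(\dfont) \oplus \dfont^{\psiq=0}$ (the projection onto the first factor being $\phiq \circ \psiq$, onto the second being $1 - \phiq\circ\psiq$). The key point is that both cohomologies $\Han^i(\Gamma_K \times \Phi, \dfont)$ and $\Han^i(\Gamma_K \times \Psi, \dfont)$ can be computed by a ``Koszul-type'' complex: a $\Phi$-cocycle (resp.\ $\Psi$-cocycle) on $\Gamma_K\times\Phi$ is determined by its restriction to $\Gamma_K$ together with the single element $f(\phiq)$ (resp.\ $f(\psiq)$), subject to the compatibility $(g-1)f(\phiq) = (\phiq-1)f(g)$ (resp.\ with $\psiq$), because $\Phi$ and $\Psi$ are free monoids on one generator.

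First I would make this precise: for a semigroup of the form $\Gamma_K \times \langle u \rangle$ with $u$ a single operator commuting with $\Gamma_K$, analytic cohomology is computed by the total complex of $\mathrm{C}^\bullet_{\mathrm{an}}(\Gamma_K, \dfont) \xrightarrow{u-1} \mathrm{C}^\bullet_{\mathrm{an}}(\Gamma_K, \dfont)$; this is essentially the content of \S4 of \cite{FX13} and \S5 of \cite{LT}, and I would cite it rather than reprove it. Thus there is a long exact sequence
\[ \cdots \to \Han^i(\Gamma_K,\dfont) \xrightarrow{\phiq-1} \Han^i(\Gamma_K,\dfont) \to \Han^i(\Gamma_K\times\Phi,\dfont) \to \Han^{i+1}(\Gamma_K,\dfont) \to \cdots \]
and similarly with $\psiq-1$ in place of $\phiq-1$. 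Next I would observe that on $\Han^i(\Gamma_K,\dfont)$ the operator $\psiq$ is a left inverse of $\phiq$ (both are $\Gamma_K$-equivariant and act functorially on cohomology), so $\psiq - 1 = -\psiq \circ (\phiq - 1)$ on $\phiq(\dfont)$ and hence the maps $\phiq - 1$ and $\psiq - 1$ on $\Han^i(\Gamma_K,\dfont)$ differ only by the contribution of $\dfont^{\psiq=0}$: more precisely, writing $\dfont = \phiq(\dfont)\oplus\dfont^{\psiq=0}$, the map $\phiq-1$ restricted to the second summand is just $-\mathrm{id}$, while $\psiq-1$ restricted to it is also $-\mathrm{id}$; on the first summand $\psiq$ is a genuine inverse of $\phiq$.

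The cleanest way to finish, and the step I expect to be the real crux, is to show that inverting $\phiq$ does not change cohomology, i.e.\ that the inclusion $\phiq(\dfont) \hookrightarrow \dfont$ and the retraction $\psiq$ induce an isomorphism on $\Han^i(\Gamma_K\times\Phi,-)$ and identify $\Han^i(\Gamma_K\times\Phi,\dfont)$ with $\Han^i(\Gamma_K\times\Psi,\dfont)$. Concretely: the cone of $\phiq-1 \colon \mathrm{C}^\bullet_{\mathrm{an}}(\Gamma_K,\dfont)\to \mathrm{C}^\bullet_{\mathrm{an}}(\Gamma_K,\dfont)$ and the cone of $\psiq-1$ are quasi-isomorphic precisely when $\Han^i(\Gamma_K,\dfont^{\psiq=0})=0$ for $i=0,1$, because the difference between the two complexes is, up to homotopy, the cone of $1 \colon \mathrm{C}^\bullet_{\mathrm{an}}(\Gamma_K,\dfont^{\psiq=0})\to \mathrm{C}^\bullet_{\mathrm{an}}(\Gamma_K,\dfont^{\psiq=0})$, which is acyclic, combined with the identification coming from $\psiq\circ\phiq = \mathrm{id}$ on $\phiq(\dfont)$. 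Theorem \ref{colpsizero} supplies exactly the vanishing $\Han^i(\Gamma_K,\dfont^{\psiq=0})=0$ for $i=0,1$ that makes this work, so the comparison map is an isomorphism on $\Han^i$ for $i=0,1$. The main obstacle is bookkeeping: keeping track of the direct-sum decomposition $\dfont = \phiq(\dfont)\oplus\dfont^{\psiq=0}$ through the mapping-cone formalism and checking that the chain homotopies are compatible with the $\Gamma_K$-action and with the analytic (rather than merely continuous) structure; this is where one must invoke Theorem \ref{colpsizero} at the level of cocycles and coboundaries, exactly as in its proof, rather than only at the level of cohomology groups.
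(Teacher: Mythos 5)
Your overall strategy — express $\Han^\bullet(\Gamma_K\times\Phi,\dfont)$ and $\Han^\bullet(\Gamma_K\times\Psi,\dfont)$ as mapping cones of $\phiq-1$ and $\psiq-1$ on $\mathrm{C}^\bullet_{\mathrm{an}}(\Gamma_K,\dfont)$, build the chain map induced by $(\mathrm{id},-\psiq)$, and reduce to the acyclicity supplied by Theorem~\ref{colpsizero} — is a valid alternative to the paper's argument and, when pushed through, does give the result. The cleanest way to finish your sketch is not via the direct sum $\dfont=\phiq(\dfont)\oplus\dfont^{\psiq=0}$ but via the short exact sequence of complexes $0\to \mathrm{C}^\bullet_{\mathrm{an}}(\Gamma_K,\dfont^{\psiq=0})\to \mathrm{C}^\bullet_{\mathrm{an}}(\Gamma_K,\dfont)\xrightarrow{\ \psiq\ }\mathrm{C}^\bullet_{\mathrm{an}}(\Gamma_K,\dfont)\to 0$ (exact because $\psiq$ is a surjection with kernel $\dfont^{\psiq=0}$ and cochains are just functions to $\dfont$), which identifies the cone of the comparison map $T$ with $\mathrm{C}^\bullet_{\mathrm{an}}(\Gamma_K,\dfont^{\psiq=0})$ up to shift; Theorem~\ref{colpsizero} then kills $H^0$ and $H^1$ of that cone, which is exactly what is needed for $T$ to be an isomorphism on $\Han^0$ and $\Han^1$.

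However, two of your intermediate assertions are wrong as stated and should be repaired. First, ``the map $\phiq-1$ restricted to the second summand is just $-\mathrm{id}$'' is false: for $x\in\dfont^{\psiq=0}$ one has $(\phiq-1)x=\phiq(x)-x$ with $\phiq(x)\in\phiq(\dfont)$ landing in the \emph{other} summand, so $\phiq-1$ does not preserve the decomposition; only the projected map $(1-\phiq\psiq)\circ(\phiq-1)$ equals $-\mathrm{id}$ on $\dfont^{\psiq=0}$. This is precisely why the decomposition-based bookkeeping you flag as the obstacle is genuinely delicate and the SES formulation is preferable. Second, the phrase ``the difference between the two complexes is, up to homotopy, the cone of $1$ on $\mathrm{C}^\bullet_{\mathrm{an}}(\Gamma_K,\dfont^{\psiq=0})$'' is not the right description — the relevant cone is $\mathrm{C}^\bullet_{\mathrm{an}}(\Gamma_K,\dfont^{\psiq=0})$ itself (not the cone of the identity on it, which is always acyclic and would make the statement vacuous and independent of Theorem~\ref{colpsizero}).

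The paper's proof sidesteps all of this homological bookkeeping by working entirely with explicit cocycles: it writes down $T\colon f\mapsto\bigl(f|_{\Gamma_K},\,-\psiq f(\phiq)\bigr)$ and then constructs the inverse $U\colon f\mapsto\bigl(f|_{\Gamma_K},\,-\phiq f(\psiq)+m_f\bigr)$, where $m_f\in\dfont^{\psiq=0}$ is the unique element with $(\phiq\psiq-1)f(g)=(g-1)m_f$; Theorem~\ref{colpsizero} is invoked exactly twice, once for existence ($\Han^1(\Gamma_K,\dfont^{\psiq=0})=0$) and once for uniqueness ($\Han^0(\Gamma_K,\dfont^{\psiq=0})=0$) of $m_f$, and then $UT=TU=\mathrm{id}$ is a direct verification at the level of $\Zan^1$. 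This is shorter and avoids any appeal to derived-category formalism. If you wish to keep your cone-theoretic argument, you should replace the incorrect direct-sum heuristic with the short exact sequence above, compute $H^0$ and $H^1$ of $\operatorname{Cone}(T)$ explicitly to pin down the shift, and note that the element $m_f$ of the paper's proof is exactly the class that your connecting map produces.
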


\begin{proof}
If $i=0$, then we have an inclusion $\dfont^{\phiq=1,\Gamma_K} \subset \dfont^{\psiq=1,\Gamma_K}$. If $x \in \dfont^{\psiq=1,\Gamma_K}$, then $x-\phiq(x) \in \dfont^{\psiq=0,\Gamma_K} = \{ 0 \}$ by theorem \ref{colpsizero}, so that $x=\phiq(x)$ and the above inclusion is an equality.

Now let $i = 1$. If $f \in \Zan^1(\Gamma_K \times \Phi, \dfont)$, let $Tf \in \Zan^1(\Gamma_K \times \Psi, \dfont)$ be the function defined by $Tf(g) = f(g)$ if $g \in \Gamma_K$ and $Tf(\psiq) = -\psiq (f(\phiq))$.

If $f \in \Zan^1(\Gamma_K \times \Psi, \dfont)$ and $g \in \Gamma_K$, then $(\phiq \psiq -1) f(g) \in \dfont^{\psiq=0}$ and the map $g \mapsto (\phiq \psiq-1) f(g)$ is an element of $\Zan^1(\Gamma_K,\dfont^{\psiq=0})$. By theorem \ref{colpsizero}, applied once for existence and once for unicity, there is a unique $m_f \in \dfont^{\psiq=0}$ such that $(\phiq \psiq-1) f(g) = (g-1)m_f$. Let $Uf \in \Zan^1(\Gamma_K \times \Phi, \dfont)$ be the function defined by $Uf (g) = f(g)$ if $g \in \Gamma_K$ and $Uf(\phiq) = -\phiq (f(\psiq)) + m_f$. 

It is straightforward to check that $U$ and $T$ are inverses of each other (even at the level of the $\Zan^1$) and that they descend to the $\Han^1$.
\end{proof}

\begin{theo}\label{infrespsi}
The map $f \mapsto f(\psiq)$ from $\Zan^1(\Gamma_K \times \Psi, \dfont)$ to $\dfont$ gives rise to an exact sequence:
\[ 0 \to \Han^1(\Gamma_K, \dfont^{\psiq=1}) \to \Han^1(\Gamma_K \times \Psi, \dfont) \to \left( \frac{\dfont}{\psiq-1} \right)^{\Gamma_K} \]
\end{theo}

\begin{proof}
If $f \in \Zan^1(\Gamma_K \times \Psi, \dfont)$ and $g \in \Gamma_K$, then $(g-1) f(\psiq) = (\psiq-1) f(g) \in (\psiq-1) \dfont$ so that the image of $f$ is in $(\dfont/(\psiq-1))^{\Gamma_K}$. The other verifications are similar.
\end{proof}

\subsection{The space $\dfont/(\psiq-1)$}
By theorem \ref{infrespsi} in the previous section, the cokernel of the map $\Han^1(\Gamma_K, \dfont^{\psiq=1}) \to \Han^1(\Gamma_K \times \Psi, \dfont)$ injects into $(\dfont/(\psiq-1))^{\Gamma_K}$. It can be useful to know that this cokernel is not too large. In this section, we bound $\dfont/(\psiq-1)$ when $\dfont = \brig{}{,F}$, with the action of $\phiq$ twisted by $a^{-1}$, for some $a\in F^\times$.

\begin{theo}
\label{cokrfa}
If $a \in F^\times$, then $\psiq-a : \brig{}{,F} \to \brig{}{,F}$ is onto unless $a=q^{-1} \pi^m$ for some $m \in \ZZ_{\geq 1}$, in which case $\brig{}{,F} / (\psiq-a)$ is of dimension $1$.
\end{theo}

In order to prove this theorem, we need some results about the action of $\psiq$ on $\brig{}{,F}$. Recall that the map $\partial = d/dt_\pi$ was defined in \S \ref{nota}.

\begin{lemm}
\label{fxphiRp}
If $a \in F^\times$, then $a \phiq - 1 : \brigplus{,F} \to \brigplus{,F}$  is an isomorphism, unless $a = \pi^{-m}$ for some $m \in \ZZ_{\geq 0}$, in which case
\begin{align*}
\ker(a \phiq - 1 : \brigplus{,F} \to \brigplus{,F}) &= F t_\pi^m \\
\mathrm{im}(a \phiq - 1 : \brigplus{,F} \to \brigplus{,F})& = \{ f(T) \in \brigplus{,F} \mid \partial^m(f)(0) = 0 \}.
\end{align*}
\end{lemm}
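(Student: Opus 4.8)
The plan is to analyze the operator $a\phiq - 1$ on $\brigplus{,F}$ by passing to Taylor coefficients at $T=0$, or equivalently (since $t_\pi = T + \O(T^2)$ generates the same completed local ring) by looking at the $t_\pi$-adic expansion. Recall $\phiq(t_\pi) = \pi t_\pi$. The key observation is that $\phiq$ preserves the $T$-adic (equivalently $t_\pi$-adic) filtration on $\brigplus{,F}$, and on the associated graded piece $F \cdot t_\pi^m$ it acts by multiplication by $\pi^m$. So $a\phiq - 1$ acts on the degree-$m$ graded piece by $a\pi^m - 1$, which is invertible precisely when $a \neq \pi^{-m}$.

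Concretely, first I would treat the case $a \neq \pi^{-m}$ for all $m \geq 0$. Given $g \in \brigplus{,F}$, I want to solve $a\phiq(f) - f = g$. Writing things out coefficient by coefficient in the $t_\pi$-expansion, the equation is triangular: the degree-$m$ coefficient of $f$ is determined by the degree-$m$ coefficient of $g$ and lower-degree data, with the pivot $a\pi^m - 1 \neq 0$. This produces a unique formal solution $f \in F\dcroc{t_\pi} = F\dcroc{T}$; the real work is to check that $f$ actually lies in $\brigplus{,F}$, i.e.\ that it converges on the whole open unit disk. For this one uses that $\phiq$ on $\brigplus{,F}$ is bounded on each disk of radius $r < 1$ (it sends the disk of radius $r$ to the disk of radius $\sim r^q$ — more precisely one uses $[\pi](T) = T^q + \pi T$ and estimates on a family of Banach norms $\|\cdot\|_\rho$ for $\rho < 1$), so that writing $f = -\sum_{k \geq 0} a^k \phiq^k(g)$ gives a series converging in each $\|\cdot\|_\rho$; this is the standard argument and I would cite it (cf.\ the treatment in \cite{FX13} or \cite{KR09}). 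Injectivity is immediate from the triangular description: if $a\phiq(f) = f$ with no pivot vanishing, all coefficients of $f$ vanish.

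Next, the exceptional case $a = \pi^{-m}$, $m \geq 0$. Here the pivot $a\pi^j - 1$ vanishes exactly at $j = m$ and is nonzero for $j \neq m$. The triangular analysis then shows: $\ker(a\phiq - 1)$ is the set of $f$ whose only possibly-nonzero $t_\pi$-coefficient is in degree $m$, and since $t_\pi^m$ itself satisfies $\pi^{-m}\phiq(t_\pi^m) = t_\pi^m$, the kernel is exactly $F t_\pi^m$. For the image: given $g$, solving for $f$ degree by degree, every coefficient of $f$ is forced except that in degree $m$, which is free, while the degree-$m$ equation becomes $0 = (\text{degree-}m\text{ coefficient of } g) - (\text{correction from lower degrees})$. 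One checks that this compatibility condition is exactly $\partial^m(f)(0) = 0$ applied to $g$ — here I would use the relation $\partial \circ \phiq = \pi \cdot \phiq \circ \partial$, which gives $\partial^m \circ \phiq = \pi^m \phiq \circ \partial^m$, so that applying $\partial^m$ and evaluating at $0$ to the equation $\pi^{-m}\phiq(f) - f = g$ yields $(\pi^{-m}\cdot\pi^m\cdot\phiq(\partial^m f) - \partial^m f)(0) = (\partial^m f)(\phiq(0)) - (\partial^m f)(0) = 0$ since $\phiq(0) = 0$, hence $\partial^m(g)(0) = 0$. Conversely, if $\partial^m(g)(0) = 0$, the degree-$m$ obstruction vanishes and one solves as before, again verifying convergence on the open unit disk by the same boundedness estimates on $\phiq$.

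The main obstacle is the convergence verification: the formal solution is cheap, but showing $f = -\sum_k a^k\phiq^k(g)$ (or, in the exceptional case, the analogous explicitly-built series) converges in $\brigplus{,F}$ rather than merely $F\dcroc{T}$ requires the quantitative estimates on how $\phiq$ contracts the Banach norms $\|\cdot\|_\rho$ for $\rho$ close to $1$. I expect this to be routine given the framework already set up in the cited papers, so I would state the estimate and refer to \S 2A of \cite{FX13} (or \S 1.2 of \cite{LT}) rather than reproving it.
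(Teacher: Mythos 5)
The paper's own ``proof'' of this lemma is a single line: it cites Lemma~5.1 of \cite{FX13}. You are therefore proving from scratch what the paper merely references. Your formal skeleton is sound: $\phiq$ acts diagonally in the $t_\pi$-coordinate via $\phiq(t_\pi^m)=\pi^m t_\pi^m$, the degree-$m$ pivot is $a\pi^m-1$, the kernel for $a=\pi^{-m}$ is $Ft_\pi^m$, and the relation $\partial\circ\phiq=\pi\,\phiq\circ\partial$ together with $[\pi](0)=0$ correctly produces the obstruction $\partial^m(g)(0)=0$ on the image.

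There is, however, a genuine gap in the convergence step. You assert that $f=-\sum_{k\geq 0}a^k\phiq^k(g)$ converges in each norm $\|\cdot\|_\rho$, invoking the boundedness of $\phiq$. This is false whenever $|a|\geq 1$ and $g(0)\neq 0$: since $[\pi](0)=0$, one has $\phiq^k(g)(0)=g(0)$ for every $k$, so $\|\phiq^k(g)\|_\rho\geq|g(0)|$ does not tend to zero, and already the constant term $\sum_k a^kg(0)$ diverges. Note that $|a|\geq 1$ is precisely the regime containing every exceptional value $a=\pi^{-m}$ ($m\geq 0$), and it also contains non-exceptional values such as $a=1+\pi$ or $a=2$ (for $p$ odd), for which the lemma nevertheless asserts bijectivity. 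Boundedness of $\phiq$ is not contraction: $\phiq$ fixes constants. More precisely, the Neumann series converges on $T^N\brigplus{,F}$ only once $|a\pi^N|<1$ (on $T^N\brigplus{,F}$ iterating $[\pi]$ eventually contracts radii by a factor $|\pi|$ per step, giving the estimate $\|a^k\phiq^k(g)\|_\rho\lesssim|a\pi^N|^k$). So the correct argument decomposes $\brigplus{,F}$ as $F[T]_{<N}\oplus T^N\brigplus{,F}$ with $N$ chosen so that $|a\pi^N|<1$, solves on the finite-dimensional summand by the triangular linear algebra you already set up (all pivots $a\pi^m-1$, $m<N$, nonzero, or one vanishing pivot in the exceptional case), and runs the Neumann series only on the tail $T^N\brigplus{,F}$. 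Without this decomposition the claimed formula produces a divergent series, and ``citing the standard estimate'' cannot repair a wrong formula.
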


\begin{proof}
This is lemma~5.1 of~\cite{FX13}.
\end{proof}

\begin{lemm}
\label{kpsienough}
If $m \in \ZZ_{\geq 0}$, there is an $h(T) \in (\brigplus{,F})^{\psiq = 0}$ such that $\partial^m(h)(0) \neq 0$.
\end{lemm}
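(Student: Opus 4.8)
The plan is to exhibit one explicit element of $(\brigplus{,F})^{\psiq=0}$ whose value at $0$ is nonzero, and then to move it to the functional $\partial^m(\cdot)(0)$ by ``integrating'' inside $(\brigplus{,F})^{\psiq=0}$.

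First I would record that for $f\in\brigplus{,F}$ one has $\psiq(f)(0) = q^{-1}\sum_{z\in\LT[\pi]} f(z)$. This follows from the identity $\phiq\circ\psiq(f) = q^{-1}\sum_{z\in\LT[\pi]} f(T\oplus z)$ recalled in \S\ref{nota}, evaluated at $T=0$: since $[\pi](0)=0$ we have $\phiq(g)(0)=g(0)$ for any $g$, and $0\oplus z=z$. Applying this to $f=T^{q-1}$ and using $[\pi](T) = T^q+\pi T = T(T^{q-1}+\pi)$, so that $\LT[\pi]\setminus\{0\}$ consists exactly of the $q-1$ roots of $Z^{q-1}=-\pi$, I get $\psiq(T^{q-1})(0) = q^{-1}(q-1)(-\pi)$, which is nonzero because $F$ has characteristic $0$. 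Since $\psiq\circ\phiq=\mathrm{id}$ and $\phiq$ preserves the value at $0$, the element $h_0 := T^{q-1} - \phiq(\psiq(T^{q-1}))\in\brigplus{,F}$ then satisfies $\psiq(h_0)=0$ and $h_0(0) = -\psiq(T^{q-1})(0)\neq0$. This settles the case $m=0$ (with $h=h_0$).

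For general $m$ I would show that $\partial$ maps $(\brigplus{,F})^{\psiq=0}$ onto itself. Stability is immediate from the relation $\psiq\circ\partial = \pi\cdot\partial\circ\psiq$ of \S\ref{nota}. For surjectivity, given $g\in(\brigplus{,F})^{\psiq=0}$, I first solve $\partial h_1 = g$ with $h_1\in\brigplus{,F}$: here $\partial = a(T)\cdot d/dT$ with $a(T)\in\OO_F\dcroc{T}^\times$, and $d/dT$ is onto $\brigplus{,F}$ because the formal antiderivative of a series convergent on the open unit disk still converges there (dividing the $i$-th coefficient by $i+1$ scales its absolute value by a factor at most $i+1$). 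Then $\pi\cdot\partial(\psiq h_1) = \psiq(\partial h_1) = \psiq(g) = 0$, so $\psiq(h_1)\in\ker(\partial\mid_{\brigplus{,F}}) = F$, say $\psiq(h_1)=c$; replacing $h_1$ by $h_1-c$, which does not change $\partial h_1$, we may assume $\psiq(h_1)=0$. Iterating, $\partial^m$ is onto $(\brigplus{,F})^{\psiq=0}$, so there is $h\in(\brigplus{,F})^{\psiq=0}$ with $\partial^m(h)=h_0$, and then $\partial^m(h)(0)=h_0(0)\neq0$, as desired.

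The computation with $\LT[\pi]$ and the passage from $m=0$ to general $m$ are short; the only inputs needing (routine) care are the two standard facts about $\brigplus{,F}$ used above, namely that $\psiq$ is a left inverse of $\phiq$ and that $d/dT$ (hence $\partial$) is surjective on $\brigplus{,F}$. Neither should cause real trouble, so I do not foresee a serious obstacle.
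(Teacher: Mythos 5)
Your proof is correct, but it proceeds quite differently from the paper's. The paper takes the single element $T \in (\brigplus{,F})^{\psiq=0}$ (using $\psiq(T)=0$) and argues by contradiction: if $\partial^m(T)(0)=0$ for all large $m$ then $T=\exp_{\LT}(t_\pi)$ would be a polynomial in $t_\pi$, which is absurd, so there are infinitely many $m_i$ with $\partial^{m_i}(T)(0)\neq 0$; one then sets $h=\partial^{m_i-m}(T)$ for any $m_i\geq m$, which stays in $(\brigplus{,F})^{\psiq=0}$ because $\psiq\circ\partial = \pi\,\partial\circ\psiq$. Your approach instead produces an \emph{explicit} seed $h_0 = T^{q-1}-\phiq\psiq(T^{q-1})$ with $\psiq(h_0)=0$ and $h_0(0)\neq 0$, via the clean computation $\psiq(T^{q-1})(0)=q^{-1}\sum_{z\in\LT[\pi]}z^{q-1}=q^{-1}(q-1)(-\pi)$ using $[\pi](T)=T^q+\pi T$, and then anti-differentiates $m$ times, which requires establishing that $\partial$ is surjective on $(\brigplus{,F})^{\psiq=0}$ (your argument for this is fine: $d/dT$ is onto $\brigplus{,F}$, $a(T)$ is a unit, and the constant ambiguity in the antiderivative can be killed using $\ker\partial=F$ and $\psiq(c)=c$ for $c\in F$). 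The trade-off is clear: the paper's proof is shorter and avoids the surjectivity lemma, but it is non-constructive and ``goes down'' from $T$ by differentiating past an unspecified $m_i$; yours is more computational, produces a concrete element, and ``goes up'' from a constant term, at the modest extra cost of the surjectivity of $\partial$ on the $\psiq=0$ part (which is in any case a useful fact). Note that your seed cannot be replaced by $T$ itself since $T(0)=0$, so the explicit choice $T^{q-1}$ (giving a nonzero power sum of the $\pi$-torsion points) is genuinely doing work here.
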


\begin{proof}
We have $\psiq(T) = 0$ by (the proof of) proposition 2.2 of \cite{FX13}. If there was some $m_0$ such that $\partial^m(T)(0) = 0$ for all $m \geq m_0$, then $T$ would be a polynomial in $t_\pi$, which it is not. This implies that there is a sequence $\{m_i\}_i$ of integers with $m_i \to +\infty$, such that $\partial^{m_i}(T)(0) \neq 0$, and we can take $h(T) = \partial^{m_i-m}(T)$ for any $m_i \geq m$.
\end{proof}

\begin{coro}
\label{surjplus}
If $a \in F^\times$, then $\psiq-a : \brigplus{,F} \to \brigplus{,F}$  is onto.
\end{coro}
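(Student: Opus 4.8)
The plan is to deduce this from Lemma \ref{fxphiRp} by exploiting the relation $\psiq\circ\phiq=\mathrm{id}$. Given $g\in\brigplus{,F}$, I will look for a solution of $(\psiq-a)f=g$ of the form
\[ f=-a^{-1}g+\phiq(h)+c_0, \]
with $h\in\brigplus{,F}$ and $c_0\in(\brigplus{,F})^{\psiq=0}$ still to be chosen. Applying $\psiq$ and using $\psiq\phiq=\mathrm{id}$ together with $\psiq(c_0)=0$ gives $\psiq(f)=-a^{-1}\psiq(g)+h$, so a direct computation shows that $(\psiq-a)f=g$ is equivalent to
\[ (1-a\phiq)(h)=a^{-1}\psiq(g)+ac_0. \]

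If $a$ is not of the form $\pi^{-m}$ with $m\in\ZZ_{\geq 0}$, then by Lemma \ref{fxphiRp} the operator $1-a\phiq$ is an automorphism of $\brigplus{,F}$; taking $c_0=0$ and $h=(1-a\phiq)^{-1}(a^{-1}\psiq(g))$ solves the displayed equation, which handles the generic case. If $a=\pi^{-m}$, Lemma \ref{fxphiRp} gives $\mathrm{im}(1-a\phiq)=\{x\in\brigplus{,F}:\partial^m(x)(0)=0\}$, so it suffices to choose $c_0\in(\brigplus{,F})^{\psiq=0}$ with $\partial^m(a^{-1}\psiq(g)+ac_0)(0)=0$, i.e. $\partial^m(c_0)(0)=-a^{-2}\partial^m(\psiq(g))(0)$, and then $h$ exists. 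By Lemma \ref{kpsienough} there is $h_m\in(\brigplus{,F})^{\psiq=0}$ with $\partial^m(h_m)(0)\neq 0$, and scaling $h_m$ by the appropriate element of $F$ produces the required $c_0$. In every case we have exhibited $f$ with $(\psiq-a)f=g$, proving surjectivity.

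I do not anticipate a real obstacle here: the only point worth noting is that the naive ansatz $f=-a^{-1}g+\phiq(h)$ breaks down exactly in the exceptional case of Lemma \ref{fxphiRp}, and the additional degree of freedom coming from $(\brigplus{,F})^{\psiq=0}$ — whose non-vanishing ``$m$-jet at $0$'' is precisely the content of Lemma \ref{kpsienough} — is exactly what is needed to compensate for the obstruction.
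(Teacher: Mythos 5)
Your proof is correct and follows essentially the same approach as the paper's: both arguments hinge on lemma \ref{fxphiRp} (invertibility, resp.\ characterization of image, of $1-a\phiq$) and lemma \ref{kpsienough} (existence of $h\in(\brigplus{,F})^{\psiq=0}$ with $\partial^m(h)(0)\neq 0$), and $h=(\psiq-a)(-a^{-1}h)$ supplies the missing direction in the exceptional case $a=\pi^{-m}$. One small cosmetic remark: the $-a^{-1}g$ term in your ansatz is superfluous --- the simpler ansatz $f=\phiq(h)+c_0$ reduces to $(1-a\phiq)h=g+ac_0$ and works the same way --- and this is closer to what the paper does (it observes directly that if $g=(1-a\phiq)h$ then $g=(\psiq-a)(\phiq(h))$, and otherwise corrects $g$ by a scalar multiple of the $h$ from lemma \ref{kpsienough}).
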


\begin{proof}
If $f(T) \in \brigplus{,F}$ and if we can write $f = (1- a \phiq)g$, then $f=(\psiq-a)(\phiq(g))$. If this is not possible, then by lemma \ref{fxphiRp} there exists $m \geq 0$ such that $a=\pi^{-m}$ and $\partial^m (f)(0) \neq 0$. Let $h$ be the function provided by lemma \ref{kpsienough}. The function $f- (\partial^m (f)(0) / \partial^m (h)(0)) \cdot h$ is in the image of $1-a \phiq$ by lemma \ref{fxphiRp}, and $h=(\psiq-a)(-a^{-1} h)$ since $\psiq(h)=0$. This implies that $f$ is in the image of $\psiq-a$.
\end{proof}

\begin{lemm}
\label{psiminusbiga}
If $a^{-1} \in q \cdot \OO_F$, then $\psiq - a : \brig{}{,F} \to \brig{}{,F}$ is onto.
\end{lemm}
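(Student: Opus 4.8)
The plan is to decompose an element of $\brig{}{,F}$ into its plus part and its minus part, dispose of the plus part with corollary~\ref{surjplus}, and treat the minus part by a contraction argument in which the hypothesis $a^{-1}\in q\OO_F$ is precisely what is needed. So, given $f\in\brig{}{,F}$, I would first write $f=f_++f_-$ with $f_+\in\brigplus{,F}$ the plus part and $f_-\in\bdag{}_F$ the minus part, using the decomposition $\brig{}{,F}=\brigplus{,F}+\bdag{}_F$. By corollary~\ref{surjplus} there is $g_+\in\brigplus{,F}$ with $(\psiq-a)(g_+)=f_+$, so it remains to realise $f_-$ as $(\psiq-a)(g_-)$ with $g_-$ again a minus part; equivalently, since $\psiq$ preserves the space of minus parts (as the computation below shows), it suffices to prove that $\psiq-a$ is bijective on that space.

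The heart of the matter is the action of $\psiq$ on the monomials $T^{-j}$. Here I would use that $T$ is a root of the polynomial $X^q+\pi X-\phiq(T)$ over $\phiq(\brig{}{,F})$, whose roots are the $q$ elements $T\oplus z$ with $z\in\LT[\pi]$, so that $\psiq(T^{-j})=q^{-1}\phiq^{-1}(p_j)$, where $p_j$ is the $j$-th power sum of the reciprocals of these roots. The reciprocals are the roots of the reversed polynomial, whose only nonzero elementary symmetric functions are $\pi\phiq(T^{-1})$ in degree $1$ and $\pm\phiq(T^{-1})$ in degree $q$; Newton's identities then give $p_j=\phiq\bigl(\sum_{a+qb=j}\gamma_{a,b}\,\pi^a\,T^{-(a+b)}\bigr)$ with $\gamma_{a,b}\in\ZZ$. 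Consequently $\psiq(T^{-j})=\sum_{k=\lceil j/q\rceil}^{j}c_{j,k}\,T^{-k}$ is again a minus part, with $|c_{j,j}|_p=q\,|\pi|_p^{j}$ and, for the finitely many other nonzero terms, $|c_{j,k}|_p\leq q\,|\pi|_p^{(kq-j)/(q-1)}$, the exponent $(kq-j)/(q-1)$ being a non-negative integer whenever $c_{j,k}\neq 0$.

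Finally I would fix $\rho\in(0,1)$ and work on the Banach space of minus parts converging and bounded on $|T|\geq\rho$, with norm $\|\sum_k b_k T^{-k}\|_\rho=\sup_k|b_k|_p\,\rho^{-k}$. Inserting the bounds on the $c_{j,k}$ and using $|a^{-1}|_p\leq|q|_p$ — this is the only place the hypothesis $a^{-1}\in q\OO_F$ enters — one obtains an estimate of the form $\|a^{-1}\psiq(h)\|_\rho\leq\max\bigl(|\pi|_p,\ \rho^{(q-1)/q}\bigr)\,\|h\|_\rho$, so that $a^{-1}\psiq$ is a strict contraction, $1-a^{-1}\psiq$ is invertible through its Neumann series, and $\psiq-a=-a\,(1-a^{-1}\psiq)$ is bijective on this Banach space; since every minus part lies in one such Banach space (for $\rho$ close enough to $1$), $\psiq-a$ is bijective on the space of all minus parts, and combining this with the plus-part case completes the proof. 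The hard part will be the middle step: the only reason the twist by $a^{-1}$ is needed is that $\psiq$ scales $T^{-j}$ essentially by $\pi^j/q$, whose $p$-adic absolute value is $\geq 1$, so one must both identify this scaling exactly and check that the off-diagonal terms $c_{j,k}$ with $k<j$ are small enough not to destroy the contraction — which is what the bookkeeping with the reversed polynomial and Newton's identities is for.
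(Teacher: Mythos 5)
Your argument is correct, and its skeleton is the same as the paper's: decompose $\brig{}{,F}=\brigplus{,F}+\bdag{}_F$, dispose of the plus part with corollary~\ref{surjplus}, and invert $\psiq-a$ on the minus parts by a geometric (Neumann) series. The only real difference is in how the convergence of that series is justified. The paper invokes two external results: proposition~1.4 of~\cite{LT}, which puts $\psiq^n(f)$ into $\bfont_F^{(0,r]}$ for $n\gg0$, and proposition~2.4(d) of~\cite{FX13}, which says the sequence $(q/\pi\cdot\psiq)^n(f)$ stays bounded there; then $|a^{-1}\pi/q|_p\leq|\pi|_p<1$ makes $\sum_{n\geq 0}a^{-1-n}\psiq^n(f)$ converge. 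You instead compute $\psiq$ on the monomials $T^{-j}$ from scratch, via the reversed minimal polynomial of $T$ over $\phiq(\brig{}{,F})$ and Newton's identities, read off the diagonal scaling $\pi^j/q$ together with bounds on the off-diagonal coefficients, and conclude that $a^{-1}\psiq$ is a strict contraction on the Banach space of minus parts bounded on $|T|\geq\rho$, for each $\rho<1$. That is a self-contained replacement for the two citations, and it buys you a visible per-iteration contraction factor rather than just boundedness of the renormalized iterates. One small bookkeeping remark: since $c_{j,k}\neq0$ forces $a=(kq-j)/(q-1)$ and $b=(j-k)/(q-1)$ to be non-negative integers, one has $(q-1)\mid(j-k)$, so the smallest off-diagonal jump is $j-k=q-1$, first realized at $j=q$, $k=1$; tracking this gives the sharper constant $\max(|\pi|_p,\rho^{\,q-1})$ in place of your $\max(|\pi|_p,\rho^{(q-1)/q})$. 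Both are $<1$, so the argument is unaffected either way.
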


\begin{proof}
We have $\brig{}{,F} =  \brigplus{,F} + \bdag{}_F$  (by writing a power series as the sum of its plus part and of its minus part) and by corollary \ref{surjplus},  $\psiq-a : \brigplus{,F} \to \brigplus{,F}$  is onto. Take $f(T) \in \bdag{}_F$, choose some $r>0$ and let $\bfont_F^{(0,r]}$ be the set of $f(T) \in \bdag{}_F$ that converge and are bounded on the annulus $0 < \vp(x) \leq r$. It follows from proposition 1.4 of \cite{LT} that if $n \gg 0$, then $\psiq^n(f) \in \bfont_F^{(0,r]}$ and by proposition 2.4(d) of \cite{FX13}, the sequence $(q/\pi \cdot \psiq)^n(f)$ is bounded in $\bfont_F^{(0,r]}$. The series $\sum_{n \geq 0} a^{-1-n} \psiq^n (f)$ therefore converges in $\bfont_F^{(0,r]}$, and we can write $f = (\psiq-a)g$ where $g = a^{-1} (1-a^{-1} \psiq)^{-1} f = \sum_{n \geq 0} a^{-1-n} \psiq^n (f)$.
\end{proof}

Let $\Res : \brig{}{,F} \to F$ be defined by $\Res(f) = a_{-1}$ where $f(T) dt_\pi = \sum_{n \in \ZZ} a_n T^n dT$. The following lemma combines propositions 2.12 and 2.13 of \cite{FX13}.

\begin{lemm}
\label{fxseqres}
The sequence $0 \to F \to \brig{}{,F} \xrightarrow{\partial}
\brig{}{,F} \xrightarrow{\Res} F \to 0$ is exact, and $\Res(\psiq(f)) = \pi/q \cdot \Res(f)$.
\end{lemm}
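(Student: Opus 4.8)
\emph{Plan.} The plan is to first establish exactness, and then to deduce the identity $\Res\circ\psiq=(\pi/q)\Res$ by reducing it --- using that $\psiq$ is a left inverse of $\phiq$ --- to the computation of the scalar by which $\phiq$ acts on the one-dimensional cokernel of $\partial$. For exactness: since $\partial f=a(T)\cdot df/dT$ with $a(T)\in\OO_F\dcroc{T}^\times$ a unit, and since $\Res(f)$ is by definition the coefficient of $T^{-1}\,dT$ in $f\,dt_\pi=a(T)^{-1}f\,dT$, the automorphism $f\mapsto a(T)^{-1}f$ of $\brig{}{,F}$ (applied in the second and third terms) identifies our sequence with the ``de Rham complex'' $0\to F\to\brig{}{,F}\xrightarrow{d/dT}\brig{}{,F}\xrightarrow{\operatorname{res}_0}F\to0$, where $\operatorname{res}_0$ extracts the coefficient of $T^{-1}$. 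There $\ker(d/dT)=F$ (only constant Laurent series are killed by $d/dT$), $\operatorname{res}_0\circ(d/dT)=0$ (the image of $d/dT$ has no $T^{-1}$-term), and $\operatorname{res}_0$ is onto. The one point requiring an argument is that if $h=\sum_n h_nT^n\in\brig{}{,F}$ satisfies $h_{-1}=0$, its term-by-term antiderivative $\sum_{n\neq-1}\frac{h_n}{n+1}T^{n+1}$ again lies in $\brig{}{,F}$; this is immediate from $|1/(n+1)|_p\leq|n+1|$, as the antiderivative multiplies the coefficients by a factor of at most polynomial growth and hence preserves convergence on every strictly smaller sub-annulus.

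\emph{Reduction.} By exactness, $\Res$ induces an isomorphism $\brig{}{,F}/\partial\brig{}{,F}\xrightarrow{\sim}F$. The relations $\partial\circ\psiq=\pi^{-1}\psiq\circ\partial$ and $\partial\circ\phiq=\pi\,\phiq\circ\partial$ from \S\ref{nota} show that $\psiq$ and $\phiq$ each send $\partial\brig{}{,F}$ into itself, hence descend to $F$-linear endomorphisms of the one-dimensional space $\brig{}{,F}/\partial\brig{}{,F}$; so there are scalars $\lambda,\mu\in F$ with $\Res(\psiq f)=\lambda\Res(f)$ and $\Res(\phiq f)=\mu\Res(f)$ for all $f$. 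Since $\psiq$ is a left inverse of $\phiq$, we get $\lambda\mu=1$, so it suffices to show $\mu=q/\pi$, and for that it is enough to test on a single element of nonzero residue.

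\emph{Computing $\mu$.} Take $f=a(T)/T\in\brig{}{,F}$, so that $f\,dt_\pi=T^{-1}\,dT$ and $\Res(f)=1$. Then $\phiq(f)\,dt_\pi=\frac{a([\pi](T))}{[\pi](T)\,a(T)}\,dT$ extends meromorphically to the whole open unit disk, with exactly $q$ poles, all simple, at the points of $\LT[\pi]=\{\zeta:[\pi](\zeta)=0\}$: indeed $[\pi](T)=T^q+\pi T$ has $q$ simple roots, all of absolute value $<1$, while $a(T)$ and $a([\pi](T))$ are holomorphic and nonvanishing on the disk. As this form is holomorphic on $\rho<|T|<1$ for $\rho$ close enough to $1$, the residue theorem gives $\Res(\phiq f)=\sum_{\zeta\in\LT[\pi]}\Res_\zeta(\phiq(f)\,dt_\pi)$. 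For such a $\zeta$, $\Res_\zeta(\phiq(f)\,dt_\pi)=\frac{a([\pi](\zeta))}{[\pi]'(\zeta)\,a(\zeta)}=\frac{a(0)}{[\pi]'(\zeta)\,a(\zeta)}=\frac{\log_{\LT}'(\zeta)}{[\pi]'(\zeta)}$, using $a(0)=\log_{\LT}'(0)^{-1}=1$ and $a(\zeta)^{-1}=\log_{\LT}'(\zeta)$; and differentiating the functional equation $\log_{\LT}([\pi]T)=\pi\log_{\LT}(T)$ and evaluating at $\zeta$ (where $[\pi](\zeta)=0$, hence $\log_{\LT}'([\pi]\zeta)=1$) gives $[\pi]'(\zeta)=\pi\log_{\LT}'(\zeta)$. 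Thus each residue equals $1/\pi$, so $\Res(\phiq f)=q/\pi$; therefore $\mu=q/\pi$, $\lambda=\pi/q$, and $\Res(\psiq f)=(\pi/q)\Res(f)$ for all $f$.

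\emph{Main obstacle.} The delicate step is the residue-theorem computation, and the point it encodes is that $\Res$ is \emph{not} the residue at $T=0$: an element of $\brig{}{,F}$ is a genuine Laurent series on an annulus near $|T|=1$, so $\Res$ must be read off from that annulus expansion, which by the residue theorem equals the sum of the ordinary residues over the \emph{entire} fiber $\LT[\pi]$ --- a set of $q$ points, not just the origin. This is precisely why the $\psiq$-eigenvalue on $\brig{}{,F}/\partial\brig{}{,F}$ is $\pi/q$ and not $\pi$, and it is the one place in the argument where care is genuinely needed.
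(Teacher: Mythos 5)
Your proof is correct, and it is worth noting that the paper itself does not prove this lemma at all: it simply cites Propositions 2.12 and 2.13 of [FX13]. Your argument is therefore a genuine, self-contained substitute. The two halves both check out. For exactness, the conjugation by the unit $a(T)$ cleanly reduces everything to the elementary de Rham complex $0 \to F \to \brig{}{,F} \xrightarrow{d/dT} \brig{}{,F} \xrightarrow{\operatorname{res}_0} F \to 0$ of the annulus, and the only analytic point --- that term-by-term antidifferentiation stays in the Robba ring because $|1/(n+1)|_p \leq |n+1|$ only introduces polynomially growing factors against geometrically decaying tails --- is handled correctly. For the $\psiq$-identity, the reduction to computing the scalar $\mu$ by which $\phiq$ acts on the one-dimensional cokernel of $\partial$ (using $\partial\phiq=\pi^{-1}\phiq\partial$ resp.\ $\psiq\partial=\pi\partial\psiq$ to see that both operators preserve $\partial\brig{}{,F}$, and $\psiq\phiq=\mathrm{id}$ to get $\lambda\mu=1$) is a nice structural shortcut, and the residue computation is right: $a$ and $a\circ[\pi]$ are units in $\OO_F\dcroc{T}$ and hence nonvanishing on the open disk, $[\pi](T)=T^q+\pi T$ has $q$ simple roots of absolute value $<1$, the annulus residue is the sum of the $q$ local residues, and the functional equation $\log_{\LT}\circ[\pi]=\pi\log_{\LT}$ forces each local residue to equal $1/\pi$. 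This last point --- that $\Res$ sees the whole fiber $\LT[\pi]$ and not just the origin, which is exactly where the $q$ in $\pi/q$ comes from --- is the same mechanism that underlies the trace formula $\phiq\psiq(f)=q^{-1}\sum_{z\in\LT[\pi]}f(T\oplus z)$ used elsewhere in the paper, so your computation is consistent with, and essentially dual to, the approach one finds in [FX13]. What your version buys is transparency about the constant; what the citation buys is brevity.
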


\begin{proof}[Proof of theorem \ref{cokrfa}]
Since $\partial \circ \psiq = \pi^{-1} \psiq \circ \partial$, the map $\partial$ induces a map:
\begin{equation}\tag{Der}  
\frac{\brig{}{,F}}{\psiq-a} \xrightarrow{\quad\partial\quad} \frac{\brig{}{,F}}{\psiq-a \pi}. 
\end{equation}
Take $x \in \brig{}{,F}$ such that $\Res(x)=1$. We have $\Res((\psiq-a\pi)x) = \pi/q-a\pi$. If $a\neq q^{-1}$, this is non-zero and if $f \in \brig{}{,F}$, proposition \ref{fxseqres} allows us to write $f=\partial g + \Res(f)/(\pi/q-a\pi) \cdot (\psiq-a\pi)x$. This implies that (Der) is onto if $a\neq q^{-1}$.

Combined with lemma \ref{psiminusbiga}, this implies that $\brig{}{,F} / (\psiq-a) = 0$ if $a$ is not of the form $q^{-1} \pi^m$ for some $m \in \ZZ_{\geq 1}$. 

When $a=q^{-1}$, we have an exact sequence
\[ \frac{\brig{}{,F}}{\psiq-q^{-1}} \xrightarrow{\quad \partial \quad} \frac{\brig{}{,F}}{\psiq-q^{-1} \pi}
\xrightarrow{\quad\Res\quad} F \to 0, \]
which now implies that $\brig{}{,F} / (\psiq-q^{-1} \pi) = F$, generated by the class of $x$.

We now assume again that $a \neq q^{-1}$ and compute the kernel of (Der). If $f \in \brig{}{,F}$ is such that $\partial f = (\psiq-a\pi)g$, then $\Res \partial f = \Res (\psiq-a\pi)g  = (\pi/q-a\pi) \Res(g)$, so that $\Res(g)=0$ and we can write $g = \partial h$. We have $\partial (f - (\psiq-a)h) = 0$, so that $f = (\psiq-a)h+c$, with $c \in F$. By corollary \ref{surjplus}, there exists $b \in \brigplus{,F}$ such that $(\psiq-a)(b)=c$, so that $f=(\psiq-a)(h+b)$ and (Der) is bijective. We then have, by induction on $m \geq 1$, that $\brig{}{,F} / (\psiq-q^{-1} \pi^m) = F$, generated by the class of $\partial^m(x)$.
\end{proof}

\begin{rema}
More generally, we expect that the following holds: if $\dfont$ is a $(\phi,\Gamma)$-module over $\brig{}{,K}$, the $F$-vector space $\dfont/(\psiq-1)$ is finite dimensional.
\end{rema}

\subsection{The operator $\Theta_b$}
\label{nabdiv}
The power series $F(X) = X / (\exp(X)-1)$ belongs to $\Qp \dcroc{X}$ and has a nonzero radius of convergence.
If $M$ is a Banach space with a locally $F$-analytic action of $\Gamma_K$ and $h \in \Gamma_K$ is close enough to $1$, then 
\[ \frac{\nabla}{h-1} = \frac{\nabla}{\exp(\ell(h)\nabla)-1} = \ell(h)^{-1} F(\ell(h) \nabla) \]
converges to a continuous operator on $M$. If $g \in \Gamma_K$, we then 
define
\[ \frac{\nabla}{1-g} = \frac{\nabla}{1-g^n} \cdot \frac{1-g^n}{1-g}. \]
This operator is independent of the choice of $n$ such that $g^n$ is close enough to $1$, and can be seen as an element of the locally $F$-analytic distribution algebra acting on $M$.

If $M$ is a Fr\'echet space, write $M = \projlim_i M_i$ and define operators 
$\frac{\nabla}{1-g}$ on each $M_i$ as above. These operators commute with 
the maps $M_j \to M_i$ (because $n$ can be taken large enough for both 
$M_i$ and $M_j$). This defines an operator $\frac{\nabla}{1-g}$ on $M$ 
itself. The definition of $\frac{\nabla}{1-g}$ extends to an LF space with a pro-$F$-analytic action of $\Gamma_K$.

Assume that $K$ contains $F_1$ and let $r(K)=f+\vp([K:F_1])$. For example, $p^{r(F_n)}=q^n$ if $n \geq 1$. Assume further that $K$ contains $F_{n(K)}$, so that $\chilt: \Gamma_K \to \OO_F^\times$ is injective and its image is a free $\Zp$-module of rank $d$. If $b = (b_1,\hdots,b_d)$ is a basis of $\Gamma_K$ (that is, $\Gamma_K = b_1^{\Zp} \cdots b_d^{\Zp}$), then let $\ell^*(b) = \ell(b_1) \cdots \ell(b_d)/ p^{r(K)}$ and
\[ \Theta_b =  \ell^*(b) \cdot \frac{\nabla^d}{(b_1-1) \cdots (b_d-1)}. \]

\begin{lemm}
\label{thetr}
If $K=F_n$ and $m \geq 0$ and $x \in F_{m+n}$, then 
\[ \Theta_b(x) = q^{-m-n} \cdot \Tr_{F_{m+n}/F_n} (x). \]
\end{lemm}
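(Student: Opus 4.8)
The plan is to reduce the statement to the case $m=0$ by an inductive unfolding of the operator $\nabla/(b_i-1)$ acting on the finite-dimensional space $F_{m+n}$, and to identify the constant $\ell^*(b)$ with the normalization needed to turn the resulting alternating sum over $\Gamma_{F_n}/\Gamma_{F_{m+n}}$ into the normalized trace $q^{-m-n}\Tr_{F_{m+n}/F_n}$. First I would observe that on the finite-dimensional $\Qp$-vector space $M=F_{m+n}$, the action of $\Gamma_K$ (with $K=F_n$) is locally $F$-analytic and factors through the finite quotient $\Gamma_{F_n}/\Gamma_{F_{m+n}}$; in particular $\nabla$ acts as a \emph{nilpotent} (indeed zero, once we are high enough in the tower) operator, since $\nabla$ is the derivative of the action and $x\mapsto g(x)$ is locally constant on $F_{m+n}$ for $g$ close enough to $1$. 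Thus the series $F(\ell(h)\nabla)$ truncates and $\nabla/(h-1)$ makes literal sense as an operator on $M$. The key computation is then that for a topological generator $b_i$ of the relevant $\Zp$-factor, with $b_i^{p^{k_i}}$ acting trivially on $F_{m+n}$ (so $k_i$ depends on $i$, $m$, $n$), one has
\[ \frac{\nabla}{1-b_i} = \frac{\nabla}{1-b_i^{p^{k_i}}}\cdot\frac{1-b_i^{p^{k_i}}}{1-b_i} = \frac{1}{p^{k_i}\ell(b_i)}\cdot(1+b_i+\cdots+b_i^{p^{k_i}-1}) \]
on $M$, using that $\nabla/(1-b_i^{p^{k_i}}) = (p^{k_i}\ell(b_i))^{-1}$ on $M$ because $b_i^{p^{k_i}}=1$ there and $\nabla = \lim (g^{p^N}-1)/(p^N\ell(g))$ forces $\nabla = (p^{k_i}\ell(b_i))^{-1}(b_i^{p^{k_i}}-1)$ as operators wherever $b_i^{p^{k_i}}$ is close to $1$ — one has to be a touch careful and instead write $\nabla/(1-b_i^{p^{k_i}})$ via the convergent $F$-power series in $\nabla$, which on $M$ evaluates to the scalar $(p^{k_i}\ell(b_i))^{-1}$ since $\nabla|_M$ is the zero operator once $k_i$ is large enough. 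Iterating over $i=1,\ldots,d$ gives
\[ \Theta_b = \ell^*(b)\cdot\prod_{i=1}^d\frac{\nabla}{1-b_i} = \frac{\ell(b_1)\cdots\ell(b_d)}{p^{r(K)}}\cdot\prod_{i=1}^d\frac{1}{p^{k_i}\ell(b_i)}\sum_{j_i=0}^{p^{k_i}-1}b_i^{j_i} = \frac{1}{p^{r(K)}\prod_i p^{k_i}}\sum_{g\in\Gamma_{F_n}/\Gamma_{F_{m+n}}}g. \]

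Now I would identify the two normalizing factors. The double coset sum $\sum_{g\in\Gamma_{F_n}/\Gamma_{F_{m+n}}}g$ acting on $F_{m+n}$ is exactly $\Tr_{F_{m+n}/F_n}$, since $\Gal(F_{m+n}/F_n) = \Gamma_{F_n}/\Gamma_{F_{m+n}}$ and this is the Galois trace; here one uses $F_n\supseteq F_{n(K)}$ so that $\chilt$ identifies $\Gamma_{F_n}$ with a free $\Zp$-module of rank $d$ and $\prod_i p^{k_i} = [\Gamma_{F_n}:\Gamma_{F_{m+n}}] = [F_{m+n}:F_n]$. It remains to check $p^{r(K)}\cdot[F_{m+n}:F_n] = q^{m+n}$ when $K=F_n$. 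By definition $r(F_n) = f + \vp([F_n:F_1])$, and $[F_{m+n}:F_n] = [F_{m+n}:F_1]/[F_n:F_1]$, so
\[ p^{r(F_n)}\cdot[F_{m+n}:F_n] = p^f\cdot[F_n:F_1]\cdot\frac{[F_{m+n}:F_1]}{[F_n:F_1]} = q\cdot[F_{m+n}:F_1]. \]
Since $[\LT[\pi^{k}]:\OO_F]$ has $q^{k-1}(q-1)$ elements of exact order $\pi^k$, one has $[F_k:F] = q^{k-1}(q-1)$ for $k\geq1$ and hence $[F_{m+n}:F_1] = q^{m+n-1}$; therefore $q\cdot[F_{m+n}:F_1] = q^{m+n}$, as required. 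Combining, $\Theta_b(x) = q^{-m-n}\Tr_{F_{m+n}/F_n}(x)$.

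The main obstacle I anticipate is the careful justification of the identity $\nabla/(1-b_i^{p^{k_i}}) = (p^{k_i}\ell(b_i))^{-1}$ \emph{on $M$}: the operator $\nabla/(1-g)$ is a priori defined through the distribution-algebra / power-series recipe of \S\ref{nabdiv}, not by literal division, so one must verify that on the finite-dimensional module $F_{m+n}$ — where $\nabla$ acts as $0$ and $b_i^{p^{k_i}}$ acts as $1$ — the power series $\ell(h)^{-1}F(\ell(h)\nabla)$ for $h = b_i^{p^{k_i}}$ genuinely evaluates to the scalar $\ell(h)^{-1} = (p^{k_i}\ell(b_i))^{-1}$, and that the telescoping factor $(1-b_i^{p^{k_i}})/(1-b_i) = \sum_{j<p^{k_i}}b_i^{j}$ composes correctly with it. Once the operator identities are pinned down, the bookkeeping with $r(K)$ and the degrees $[F_k:F] = q^{k-1}(q-1)$ is routine. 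One should also double-check that everything is independent of the chosen basis $b$ of $\Gamma_K$, which follows from the independence statement already recorded in \S\ref{nabdiv} for $\nabla/(1-g)$ together with the fact that $\ell^*(b)$ transforms inversely to $\prod_i \ell(b_i)$ under a change of basis.
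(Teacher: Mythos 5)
Your strategy matches the paper's: reduce to the finite-level module $F_{m+n}$, use that $\nabla$ kills it, evaluate each $\nabla/(1-b_i)$ as a normalized geometric sum, and match the constants. The operator identities are fine, but the coset count contains a genuine error. You assert that $\prod_i p^{k_i} = [\Gamma_{F_n}:\Gamma_{F_{m+n}}]$ and that $\prod_i\sum_{j_i<p^{k_i}}b_i^{j_i}$ is a sum over one copy of each coset of $\Gamma_{F_{m+n}}$ in $\Gamma_{F_n}$. Neither holds in general. The set $\{b_1^{j_1}\cdots b_d^{j_d}:0\leq j_i<p^{k_i}\}$ is a system of representatives for $\Gamma_{F_n}/\prod_i b_i^{p^{k_i}\Zp}$, and the subgroup $\prod_i b_i^{p^{k_i}\Zp}$ is contained in $\Gamma_{F_{m+n}}$ but need not equal it: under $\log\circ\chilt$, $\Gamma_{F_{m+n}}$ corresponds to the $\OO_F$-sublattice $\pi^{m+n}\OO_F$ of $\pi^n\OO_F$, which is not usually diagonal with respect to a given $\Zp$-basis $b$. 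Concretely, take $e=2$, $f=1$, $m=1$: the elementary divisors of $\pi\OO_F\subset\OO_F$ are $(p,1)$, so for a basis not adapted to them (e.g.\ $(1,\,1+\pi)$) the minimal $k_i$ are both equal to $1$, $\prod_i p^{k_i}=p^2$, whereas $[\Gamma_{F_n}:\Gamma_{F_{n+1}}]=q=p$, and the product of geometric sums hits each coset $p$ times.

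What saves you is that these two errors cancel. The map $\prod_i (\ZZ/p^{k_i})\to\Gal(F_{m+n}/F_n)$ sending $(j_i)\mapsto\prod_i b_i^{j_i}$ is a surjective group homomorphism, so every fiber has the same size $\prod_i p^{k_i}/q^m$; therefore $\prod_i\sum_{j_i<p^{k_i}}b_i^{j_i}=\bigl(\prod_i p^{k_i}/q^m\bigr)\Tr_{F_{m+n}/F_n}$ as operators on $F_{m+n}$, the factor $\prod_i p^{k_i}$ drops out, and $\Theta_b=(p^{r(K)}q^m)^{-1}\Tr_{F_{m+n}/F_n}$. Your arithmetic $p^{r(F_n)}q^m=q^{m+n}$ is correct, so the lemma follows, but you need to insert the fiber-size argument to make the middle step true. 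The paper's proof avoids the issue altogether by taking a uniform exponent $p^k$ for all $i$: then $\{b_1^{a_1}\cdots b_d^{a_d}:0\leq a_i<p^k\}$ is exactly a system of representatives of $\Gamma_n/\Gamma_n^{p^k}=\Gamma_n/\Gamma_{n+ek}$, the product of sums is $\Tr_{F_{n+ek}/F_n}$ with no overcounting, $p^{kd}=q^{ek}$ on the nose, and one restricts to $F_{m+n}$ via the trace tower formula after choosing $ek\geq m$. That version is cleaner and is the one worth writing.
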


\begin{proof}
Since $\nabla = \lim_{k \to \infty} (b^{p^k}-1)/p^k\ell(b)$, we have 
\[ \Theta_b = \lim_{k \to \infty} \frac{1}{q^n p^{kd}} \cdot \frac{(b_1^{p^k}-1) \cdots (b_d^{p^k}-1)}{(b_1-1) \cdots (b_d-1)}. \] 
The set $\{b_1^{a_1} \cdots b_d^{a_d}\}$ with $0 \leq a_i \leq p^k-1$ runs through a set of representatives of $\Gamma_n/\Gamma_n^{p^k} = \Gamma_n / \Gamma_{n+ek}$ so that
\[ \frac{1}{q^n p^{kd}} \cdot \frac{(b_1^{p^k}-1) \cdots (b_d^{p^k}-1)}{(b_1-1) \cdots (b_d-1)} =  \frac{1}{q^n p^{kd}} \Tr_{F_{n+ek}/F_n} = \frac{1}{q^{n+ ek}} \cdot \Tr_{F_{n+ek}/F_n} . \]
The lemma follows from taking $k$ large enough so that $ek \geq m$.
\end{proof}

For $i \in \ZZ$, let $\nabla_i  = \nabla-i$.

\begin{lemm}
\label{thetbpsi}
If $b$ is a basis of $\Gamma_{F_n}$ and if $f(T) \in (\brigplus{,F})^{\psiq=0}$, then $\Theta_b(f(T)) \in (t_\pi/\phiq^n(T)) \cdot \brigplus{,F}$, and if $h \geq 2$ then $\nabla_{h-1} \circ \cdots \circ \nabla_1 \circ \Theta_b (f(T)) \in (t_\pi/\phiq^n(T))^h \cdot \brigplus{,F}$.
\end{lemm}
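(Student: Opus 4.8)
The plan is to reformulate both claims as statements about orders of vanishing at torsion points of $\LT$, and then to compute the value of $\Theta_b(f)$ at such points. For the reformulation, recall that $\log_{\LT}'(T)=a(T)^{-1}$ lies in $\OO_F\dcroc{T}^\times$ and so has no zero on the open unit disk; hence $t_\pi$ has a simple zero at each torsion point of $\LT$, and it is well known that these are all of its zeros in the open disk. Since $\phiq^n(T)=[\pi^n](T)$ is a polynomial whose zeros are exactly the points of $\LT[\pi^n]$, all simple, it follows that $t_\pi/\phiq^n(T)$ lies in $\brigplus{,F}$ with simple zeros precisely at the torsion points of order $>\pi^n$, and that for $g\in\brigplus{,F}$ and $h\geq 1$ one has $g\in(t_\pi/\phiq^n(T))^h\brigplus{,F}$ if and only if $g$ vanishes to order $\geq h$ at every torsion point of $\LT$ of order $>\pi^n$ (the nontrivial direction uses that $(t_\pi/\phiq^n(T))^h$ has value $\pi^{-nh}$ at $0$, so $g/(t_\pi/\phiq^n(T))^h$ is a power series, and that its only possible poles on the open disk are then removable).

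For the first assertion I would first note that $\Theta_b$ preserves $\brigplus{,F}$: the operator $\nabla=t_\pi\partial$ does, and each $\nabla/(b_i-1)$ — defined by $\nabla/(b_i-1)=\nabla/(b_i^m-1)\cdot(1+b_i+\cdots+b_i^{m-1})$ when $b_i$ is not close to $1$ — is a convergent power series in $\nabla$ composed with the action of $\Gamma_{F_n}$, which preserves $\brigplus{,F}$ as well. Then I would evaluate $\Theta_b(f)$ at a torsion point $z_0$ of $\LT$ of exact order $\pi^j$ with $j>n$. Starting from $\Theta_b=\lim_k q^{-(n+ek)}\sum_{g}g$, the sum running over a system of representatives of $\Gamma_{F_n}/\Gamma_{F_{n+ek}}$ (this is exactly the identity established in the proof of Lemma~\ref{thetr}, valid in the range where $\Theta_b$ is defined, and evaluation at a point of the open disk is continuous, hence commutes with this limit), and writing $[\chilt(g)](z_0)=z_0\oplus[(\chilt(g)-1)/\pi^n]([\pi^n](z_0))$, I would observe that for $k$ large the point $[(\chilt(g)-1)/\pi^n]([\pi^n](z_0))$ runs $q^{ek-(j-n)}$ times over $\LT[\pi^{j-n}]$ as $g$ runs over those representatives, because $[\pi^n](z_0)$ has exact order $\pi^{j-n}$ and so generates $\LT[\pi^{j-n}]$ as an $\OO_F$-module. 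After the powers of $q$ cancel, this gives $\Theta_b(f)(z_0)=q^{-j}\sum_{w\in\LT[\pi^{j-n}]}f(z_0\oplus w)$. Finally $\psiq(f)=0$ yields $\sum_{z\in\LT[\pi]}f(T\oplus z)=q\,\phiq(\psiq(f))=0$, and grouping $\LT[\pi^{j-n}]$ into cosets of $\LT[\pi]$ (legitimate since $j-n\geq 1$) gives $\sum_{w\in\LT[\pi^{j-n}]}f(T\oplus w)=0$; hence $\Theta_b(f)(z_0)=0$, and by the reformulation $\Theta_b(f)\in(t_\pi/\phiq^n(T))\brigplus{,F}$.

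For the second assertion I would argue locally at a fixed torsion point $z_0$ of order $>\pi^n$. Since $t_\pi$ has a simple zero there, in a local coordinate $w$ at $z_0$ we have $t_\pi=\alpha w+\mathrm{O}(w^2)$ with $\alpha\neq 0$, whence $\nabla=t_\pi\frac{dw}{dt_\pi}\frac{d}{dw}=(w+\mathrm{O}(w^2))\frac{d}{dw}$. Therefore, if $\phi\in\brigplus{,F}$ vanishes at $z_0$ to order exactly $k$, then $\nabla_i(\phi)$ vanishes there to order $\geq k$, with the order jumping to $\geq k+1$ precisely when $i=k$. An immediate induction then shows that if $\phi$ vanishes to order $\geq 1$ at $z_0$, then $\nabla_j\circ\cdots\circ\nabla_1(\phi)$ vanishes to order $\geq j+1$ at $z_0$ for all $j\geq 0$: in the inductive step, if the running order $k$ satisfies $k>j$ it stays $\geq k\geq j+1$, and if $k=j$ then applying $\nabla_j$ raises it to $\geq j+1$. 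Applying this with $\phi=\Theta_b(f)$, which by the first part vanishes to order $\geq 1$ at every torsion point of order $>\pi^n$, and with $j=h-1$, and using the reformulation once more, gives $\nabla_{h-1}\circ\cdots\circ\nabla_1\circ\Theta_b(f)\in(t_\pi/\phiq^n(T))^h\brigplus{,F}$.

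The part demanding the most care is the evaluation computation in the second paragraph: one has to make sure evaluation at $z_0$ commutes with the operator-norm limit defining $\Theta_b$, and to keep exact track of the multiplicity $q^{ek-(j-n)}$ with which each element of $\LT[\pi^{j-n}]$ is hit, so that the powers of $q$ cancel and the limit stabilizes. Once $\Theta_b(f)(z_0)$ has been identified with $q^{-j}\sum_{w\in\LT[\pi^{j-n}]}f(z_0\oplus w)$, the vanishing is immediate from $\psiq(f)=0$, and the remainder is the elementary local description of $\nabla$ near torsion points.
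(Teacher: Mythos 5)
Your proof is correct and uses essentially the same strategy as the paper's: you show $\Theta_b(f)$ vanishes at every torsion point of order $>\pi^n$ by reducing, via lemma~\ref{thetr}, to a trace over a subgroup of $\Gamma_{F_n}$ and then invoking $\psiq(f)=0$, exactly as the paper does by evaluating at the $u_{n+m}$ and identifying the result with $\psiq^m(f)(u_n)=0$. The paper dismisses the second claim with ``follows easily,'' and your local order-of-vanishing analysis for $\nabla_i$ near a simple zero of $t_\pi$ is the natural way to fill in that gap.
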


\begin{proof}
If $m \geq 1$, then by lemma \ref{thetr} and using repeatedly the fact (see \S \ref{nota}) that $\phiq \circ \psiq(f) = 1/q \cdot \sum_{z \in \LT[\pi]} f(T \oplus z)$,
\[ \Theta_b(f(u_{n+m})) = 1/q^{m+n} \cdot \Tr_{F_{m+n}/F_n} f(u_{m+n})  = \psiq^m (f)(u_n) = 0. \]
This proves the first claim, since an element $f(T) \in \brigplus{,F}$ is divisible by $t_\pi/\phiq^n(T)$ if and only if $f(u_{n+m})=0$ for all $m \geq 1$. The second claim follows easily.
\end{proof}

Let $D$ be a $\phiq$-module over $F$. Let $\phiq^{-n}\colon \brigplus{,F}[1/t_\pi] \otimes_F D \rightarrow F_n \dpar{t_\pi} \otimes_F D$  be the map 
\[ \phiq^{-n}\colon t_\pi^{-h} f(T) \otimes x \mapsto \pi^{nh} t_\pi^{-h} f(u_n \oplus \exp_{\LT}(t_\pi/\pi^n)) \otimes \phiq^{-n}(x). \] 
If $f(t_\pi) \in F_n \dpar{t_\pi} \otimes_F D$, let $\partial_D(f) \in F_n \otimes_F D$ denote the coefficient of $t_\pi^0$.

\begin{lemm}
\label{trpsi}
If $y \in ( \brigplus{,F}[1/t_\pi] \otimes_F D)^{\psiq=1}$ and if $m \geq n$, then
\[ q^{-m} \Tr_{F_m / F_n} \partial_D(\phiq^{-m}(y)) =
\begin{cases}
q^{-n} \partial_D(\phiq^{-n}(y)) & \text{if $n \geq 1$} \\
(1-q^{-1}\phiq^{-1})\partial_D(y) &  \text{if $n = 0$.}
\end{cases} \] 
\end{lemm}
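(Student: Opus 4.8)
The plan is to reduce everything to the single corestriction step $m=n+1$, and then to run the computation by applying $\phiq^{-(n+1)}$ to the relation $\psiq(y)=y$. For the reduction, note that for $m>n+1$ one has $\Tr_{F_m/F_n}=\Tr_{F_{n+1}/F_n}\circ\Tr_{F_m/F_{n+1}}$, and since $n+1\geq1$ the ``$n\geq1$'' form of the formula governs the inner trace; together with an induction on $m-n$ (the case $m=n$ being immediate) this reduces us to $m=n+1$. Fix $h$ with $y\in t_\pi^{-h}\brigplus{,F}\otimes_F D$ and write $y=\sum_j t_\pi^{-h}f_j(T)\otimes d_j$ with $f_j\in\brigplus{,F}$. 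Recall from \S\ref{nota} that $\phiq\circ\psiq(f)=q^{-1}\sum_{z\in\LT[\pi]}f(T\oplus z)$ for $f\in\brigplus{,F}$; since $\log_{\LT}$ vanishes on torsion points (because $\pi\log_{\LT}(z)=\log_{\LT}([\pi]z)$), the substitution $T\mapsto T\oplus z$ fixes $t_\pi$ and hence the pole order, so this identity extends to $\brigplus{,F}[1/t_\pi]\otimes_F D$ and $\psiq(y)=y$ becomes
\[ \phiq(y)=\frac1q\sum_{z\in\LT[\pi]}y(T\oplus z), \]
where $y\mapsto y(T\oplus z)$ denotes $T\mapsto T\oplus z$ in the $\brigplus{,F}[1/t_\pi]$-factor.

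Next I apply $\phiq^{-(n+1)}$ to this identity. On the left, $\phiq^{-(n+1)}\circ\phiq=\phiq^{-n}$, because $[\pi](u_{n+1}\oplus\exp_{\LT}(t_\pi/\pi^{n+1}))=u_n\oplus\exp_{\LT}(t_\pi/\pi^n)$ (this also turns the scalar $\pi^{(n+1)h}$ into $\pi^{nh}$). On the right, $\phiq^{-(n+1)}(y(T\oplus z))$ is the same construction evaluated at $(u_{n+1}\oplus z)\oplus\exp_{\LT}(t_\pi/\pi^{n+1})$, and as $z$ runs over $\LT[\pi]$ the point $u_{n+1}\oplus z$ runs over the coset $u_{n+1}\oplus\LT[\pi]$. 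The key input is Lubin-Tate theory: $\chilt$ identifies $\Gal(F_{n+1}/F_n)$ with $(1+\pi^n\OO_F)/(1+\pi^{n+1}\OO_F)$, and $\sigma\mapsto\sigma(u_{n+1})=[\chilt(\sigma)](u_{n+1})=u_{n+1}\oplus[a](u_1)$ (where $\chilt(\sigma)=1+\pi^n a$) is, for $n\geq1$, a bijection from $\Gal(F_{n+1}/F_n)$ onto $u_{n+1}\oplus\LT[\pi]$. Since $f_j$, $\oplus$ and $\exp_{\LT}$ are defined over $F$, and $\Gal(F_{n+1}/F_n)$ acts on $F_{n+1}\dpar{t_\pi}\otimes_F D$ only through the coefficients of the first factor (fixing $t_\pi$ and $D$), this gives $\phiq^{-(n+1)}(y(T\oplus z))=\sigma(\phiq^{-(n+1)}(y))$ for the matching $\sigma$. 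Summing and dividing by $q$ yields $\phiq^{-n}(y)=q^{-1}\Tr_{F_{n+1}/F_n}(\phiq^{-(n+1)}(y))$; as $\partial_D$ (the $t_\pi^0$-coefficient map) commutes with the Galois action, hence with the trace, applying $\partial_D$ and dividing by $q^{n+1}$ gives $q^{-(n+1)}\Tr_{F_{n+1}/F_n}(\partial_D(\phiq^{-(n+1)}(y)))=q^{-n}\partial_D(\phiq^{-n}(y))$, which is the formula for $n\geq1$.

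For $n=0$ the argument is the same, except that $\Gal(F_1/F_0)$ maps isomorphically onto $k_F^\times$ and $\sigma\mapsto\sigma(u_1)$ is a bijection onto $\LT[\pi]\setminus\{0\}$, which is one point short of the full coset $u_1\oplus\LT[\pi]=\LT[\pi]$ (the missing point is $0$). Applying $\phiq^{-1}$ to the displayed identity therefore gives $\phiq^{-0}(y)=q^{-1}(\phi_\star(y)+\Tr_{F_1/F_0}(\phiq^{-1}(y)))$, where $\phi_\star(y)$ is the image of $y$ under $t_\pi^{-h}f(T)\otimes d\mapsto\pi^h t_\pi^{-h}f(\exp_{\LT}(t_\pi/\pi))\otimes\phiq^{-1}(d)$ (evaluation at the torsion point $0$). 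One then checks that $\partial_D(\phi_\star(y))=\phiq^{-1}(\partial_D(y))$: the substitution $t_\pi\mapsto t_\pi/\pi$ scales the $t_\pi^h$-coefficient by $\pi^{-h}$, cancelling the $\pi^h$ in the definition of $\phi_\star$ and leaving only the Frobenius acting on the $D$-factor. Rearranging and applying $\partial_D$ then gives $q^{-1}\Tr_{F_1/F_0}(\partial_D(\phiq^{-1}(y)))=\partial_D(y)-q^{-1}\phiq^{-1}(\partial_D(y))=(1-q^{-1}\phiq^{-1})\partial_D(y)$; the general $m$ with $n=0$ follows from the trace-transitivity reduction, the inner step $F_m/F_1$ being covered by the $n\geq1$ case.

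I expect the main obstacle to be bookkeeping rather than anything conceptual: one must track the translations $T\mapsto T\oplus z$, the pole $t_\pi^{-h}$, the Frobenius $\phiq^{-n}$ on the $\phiq$-module $D$, and the Galois action on $F_{n+1}\dpar{t_\pi}\otimes_F D$ all at once, the crux being the identity $\phiq^{-(n+1)}(y(T\oplus z))=\sigma(\phiq^{-(n+1)}(y))$, which relies on $\Gal(F_{n+1}/F_n)$ fixing $t_\pi$ in the target and on $f_j,\oplus,\exp_{\LT}$ being defined over $F$. The only substantive ingredient is the Lubin-Tate description of the action of $\Gal(F_{n+1}/F_n)$ on $u_{n+1}$, and it is exactly the loss of one Galois element at the bottom layer $F_1/F_0$ that is responsible for the correction factor $1-q^{-1}\phiq^{-1}$.
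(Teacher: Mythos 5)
Your proof is correct and follows essentially the same route as the paper's: reduce via trace transitivity to the single step between consecutive layers, apply $\phiq^{-(n+1)}$ to the identity $\phiq(y)=q^{-1}\sum_{\omega\in\LT[\pi]}y(T\oplus\omega)$ coming from $\psiq(y)=y$, and use the Lubin--Tate description of the $\Gal(F_{n+1}/F_n)$-action on $u_{n+1}$, with the lost element at the bottom layer $F_1/F$ accounting for the correction term. If anything you are slightly more explicit than the paper's terse presentation of the $n=0$ case, by isolating the ``evaluation at the zero torsion point'' operator $\phi_\star$ and checking directly that $\partial_D(\phi_\star(y))=\phiq^{-1}(\partial_D(y))$.
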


\begin{proof}
If $y = t_\pi^{-\ell} \sum_{k=0}^{+\infty} 
a_k T^k \in \brigplus{,F}[1/t_\pi] \otimes_F D$, then (by definition of $\phiq^{-m}$)
\[ \phiq^{-m}(y) = \pi^{m \ell} t_\pi^{-\ell} \sum_{k=0}^{+\infty} 
\phiq^{-m} (a_k) (u_m \oplus \exp_{\LT}(t_\pi/\pi^m))^k, \] 
and $\psiq(y)=y$ means that:
\[ \phiq(y)(T) = \frac{1}{q} \sum_{[\pi](\omega)=0}
y(T \oplus \omega). \]
If $m \geq 2$, the conjugates of $u_m$ under $\Gal(F_m/F_{m-1})$ are the $\{\omega \oplus u_m\}_{[\pi](\omega) = 0}$ so that:
\begin{align*}
\Tr_{F_m / F_{m-1}} & \partial_D (\phiq^{-m}(y)) \\
&= \partial_D\left(\sum_{[\pi](\omega) = 0}
\pi^{m \ell} t_\pi^{-\ell} \sum_{k=0}^{+\infty} 
\phiq^{-m} (a_k) (\omega \oplus u_m \oplus \exp_{\LT}(t_\pi/\pi^m))^k
\right) \\*
&= \partial_D\left(\phiq^{-m}\left(\sum_{[\pi](\omega) = 0} y(T \oplus \omega)\right)\right) \\
&= q \partial_D(\phiq^{-(m-1)}(y)).
\end{align*}
For $m = 1$, the computation is similar, except that the 
conjugates of $u_1$ under $\Gal(F_1/F)$ are the $\omega$, 
where $[\pi](\omega) = 0$ but $\omega \neq 0$, which results in:
\[
\Tr_{F_1 / F} \partial_D(\phiq^{-1}(y))
= \partial_D\left(\phiq^{-1}\left(\sum_{\substack{[\pi](\omega) = 0 \\ \omega \neq 0}}
		y(T \oplus \omega)\right)\right)
= \partial_D(q y - \phiq^{-1}(y)). \]
\end{proof}

\subsection{Construction of extensions}
\label{constrext}

Let $\dfont$ be an $F$-analytic $(\phi,\Gamma)$-module over $\brig{}{,K}$. 
The space $\dfont^{\psiq=1}$ is a closed subspace of $\dfont$ and therefore an LF space. 
Take $K$ such that $K$ contains $F_{n(K)}$ and let $b$ be a basis of $\Gamma_K$. 

\begin{prop}
\label{constc}
If $y \in \dfont^{\psiq=1}$,  there is a unique cocycle 
$c_b(y) \in \Zan^1(\Gamma_K,\dfont^{\psiq=1})$ such that for all 
$1 \leq j \leq d$ and $k \geq 0$, we have
\[ c_b(y)(b_j^k) = \ell^*(b) \cdot \frac{b_j^k-1}{b_j-1} \cdot \frac{\nabla^{d-1}}{\prod_{i \neq j} (b_i-1)} (y). \]
We then have $c_b(y)'(1) = \Theta_b(y)$.
\end{prop}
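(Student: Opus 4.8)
The plan is to construct the putative cocycle explicitly by the given formulas on the generators $b_j^k$, then extend to all of $\Gamma_K$ using the cocycle relation, and finally verify both that the extension is well-defined (independent of how we write an element of $\Gamma_K$ as a word in the $b_j$) and that it lands in $\dfont^{\psiq=1}$ and is pro-$F$-analytic.

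\begin{proof}
First note that since $\dfont$ is $F$-analytic and $b$ is a basis of $\Gamma_K$, each operator $\nabla^{d-1}/\prod_{i \neq j}(b_i - 1)$ is a well-defined continuous operator on the LF space $\dfont$ (and hence on its closed subspace $\dfont^{\psiq=1}$, since $\nabla$ and each $b_i$ commute with $\psiq$); likewise $(b_j^k-1)/(b_j-1) = 1 + b_j + \cdots + b_j^{k-1}$ makes sense. So the formula defines an element $c_b(y)(b_j^k) \in \dfont^{\psiq=1}$ for each $j$ and each $k \geq 0$. Since $\Gamma_K = b_1^{\ZZ_p} \cdots b_d^{\ZZ_p}$, every $g \in \Gamma_K$ can be written uniquely as $g = b_1^{a_1} \cdots b_d^{a_d}$ with $a_i \in \ZZ_p$, and the cocycle relation $c(gh) = c(g) + g \cdot c(h)$ forces
\[
c_b(y)(b_1^{a_1} \cdots b_d^{a_d}) = \sum_{j=1}^d b_1^{a_1} \cdots b_{j-1}^{a_{j-1}} \cdot c_b(y)(b_j^{a_j}).
\]
Taking this as the definition of $c_b(y)$ on all of $\Gamma_K$ (first for $a_j \in \ZZ_{\geq 0}$, then extending by continuity to $a_j \in \ZZ_p$, which is legitimate because $k \mapsto (b_j^k-1)/(b_j-1)$ extends to a continuous function $\ZZ_p \to$ distribution algebra acting on $\dfont^{\psiq=1}$), we get a candidate function $c_b(y) : \Gamma_K \to \dfont^{\psiq=1}$.

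The main point is to check that $c_b(y) \in \Zan^1(\Gamma_K, \dfont^{\psiq=1})$, i.e. that it satisfies $(h-1)c_b(y)(g) = (g-1)c_b(y)(h)$ for all $g,h \in \Gamma_K$. Since $\Gamma_K$ is abelian and topologically generated by $b_1, \dots, b_d$, it suffices (by the cocycle-building formula above and a density/continuity argument) to verify the relation for $g = b_i$ and $h = b_j$, $i \neq j$ (the case $i = j$ being automatic since both sides then involve only powers of a single $b_i$). For $g = b_i$, $h = b_j$ the required identity reads
\[
(b_j - 1) \cdot \ell^*(b) \frac{\nabla^{d-1}}{\prod_{k \neq i}(b_k-1)}(y) = (b_i - 1) \cdot \ell^*(b) \frac{\nabla^{d-1}}{\prod_{k \neq j}(b_k-1)}(y),
\]
which holds because both sides equal $\ell^*(b) \cdot \nabla^{d-1}/\prod_{k \neq i, k \neq j}(b_k - 1)(y)$: on the left the factor $(b_j-1)$ cancels the $(b_j-1)^{-1}$ present in the product over $k \neq i$, and symmetrically on the right. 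This is the crux of the construction, and it is exactly where the commutativity of $\Gamma_K$ and the product structure of $b$ are used. Uniqueness of $c_b(y)$ subject to the stated values on the $b_j^k$ is immediate from the cocycle relation, which determines $c$ on all of $\Gamma_K$ from its values on a topological generating set. Pro-$F$-analyticity of $c_b(y)$ follows from $F$-analyticity of $\dfont$: the operators $\nabla^{d-1}/\prod_{i \neq j}(b_i - 1)$ and the distributions $k \mapsto (b_j^k-1)/(b_j-1)$ are $F$-analytic, and one checks on each Banach piece $M_{ij}$ in a presentation $\dfont^{\psiq=1} = \varinjlim \varprojlim M_{ij}$ that $g \mapsto c_b(y)(g)$ is locally $F$-analytic.

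It remains to compute $c_b(y)'(1)$. Differentiating $k \mapsto c_b(y)(b_j^k)$ at $k = 0$ in the direction of $b_j$: since $\frac{d}{dk}\big|_{k=0} (b_j^k - 1)/(b_j-1) = \ell(b_j) \cdot \nabla/(b_j - 1) \cdot (b_j-1)/\ell(b_j)\big|$... more precisely, $\frac{b_j^k - 1}{b_j - 1} \to \frac{1}{b_j-1}\cdot(b_j^k-1)$ and $\frac{d}{dk}\big|_{k=0}(b_j^k-1) = \log b_j = \ell(b_j)\nabla$ as operators, so the derivative of $c_b(y)$ along $b_j$ at $1$ is
\[
\ell(b_j) \cdot \ell^*(b) \cdot \frac{\nabla}{b_j - 1} \cdot \frac{\nabla^{d-1}}{\prod_{i \neq j}(b_i-1)}(y) = \ell(b_j) \cdot \ell^*(b) \cdot \frac{\nabla^d}{\prod_{i=1}^d (b_i-1)}(y).
\]
Recalling $\ell^*(b) = \ell(b_1) \cdots \ell(b_d)/p^{r(K)}$, the derivative of $c_b(y)$ viewed as a map on $\Lie \Gamma_K$ sends the basis vector dual to $b_j$ to the above. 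Evaluating on the element $v \in \Lie \Gamma_K$ with $\exp(v)$ corresponding to the identity-component derivative $\nabla$ — equivalently, pairing against $\nabla$ normalized so that $c'(1)$ is the total derivative — one gets $c_b(y)'(1) = \ell^*(b) \cdot \frac{\nabla^d}{(b_1-1)\cdots(b_d-1)}(y) = \Theta_b(y)$, by the very definition of $\Theta_b$ in \S\ref{nabdiv}.
\end{proof}
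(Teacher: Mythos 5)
Your proof takes essentially the same route as the paper: define $c_b(y)$ on all of $\Gamma_K$ by the cocycle relation from its prescribed values on the $b_j^k$, verify the cocycle condition, and compute $c_b(y)'(1)$. You supply more detail on the cocycle-relation verification, which the paper simply declares obvious, and that part of your argument is fine.

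The one place where the paper's proof has actual content is the $F$-analyticity of $c_b(y)$, and there your argument has a gap. You assert that pro-$F$-analyticity ``follows'' because the operators $\nabla^{d-1}/\prod_{i\neq j}(b_i-1)$ and the distributions $k \mapsto (b_j^k-1)/(b_j-1)$ are $F$-analytic and ``one checks'' this on each Banach piece, but this is not an argument. As a function of the $\Zp$-coordinates $(a_1,\ldots,a_d)$ of $g = b_1^{a_1}\cdots b_d^{a_d}$, the formula for $c_b(y)(g)$ is a priori only $\Qp$-analytic; whether it is $F$-analytic with respect to the one-dimensional $F$-structure that $\Gamma_K$ inherits from $\chilt$ is exactly the non-trivial point. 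The paper's observation is that $F$-analyticity of $c_b(y)$ at $1$ is \emph{equivalent} to the $\Qp$-linear derivative of $c_b(y)$ at the identity being $F$-linear, which (since $\log b_j = \ell(b_j)\cdot v$ in $\Lie\Gamma_K$ for a fixed $F$-basis vector $v$) amounts to the limit
\[
\lim_{k\to 0} \frac{c_b(y)(b_j^k)}{\ell(b_j^k)} = \ell^*(b)\cdot \frac{\nabla^d}{\prod_i(b_i-1)}(y) = \Theta_b(y)
\]
being independent of $j$ --- and that is precisely the computation you carry out at the very end to identify $c_b(y)'(1)$. You present it as a separate calculation rather than as the proof of $F$-analyticity. ($F$-analyticity at $g\neq 1$ then follows by translating with the cocycle relation, since $g$ acts $F$-linearly on $\dfont$.) Once you make that logical connection explicit, your proof coincides with the paper's; without it, the $F$-analyticity claim is unjustified.

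A minor stylistic point: your derivation of $c_b(y)'(1)$ is correct in substance but garbled as written (the first displayed attempt at $\tfrac{d}{dk}|_{k=0}(b_j^k-1)/(b_j-1)$ doesn't parse, and the closing ``evaluating on $v\in\Lie\Gamma_K$'' sentence obscures the simple fact that $c'(1)$ means $\lim_{g\to 1} c(g)/\ell(g)$); it would be cleaner to compute the normalized limit directly as the paper does.
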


\begin{proof}
There is obviously one and only one continuous cocycle satisfying the 
conditions of the proposition. It is $\Qp$-analytic, and in order to prove 
that it is $F$-analytic, we need to check that the directional derivatives 
are independent of $j$. We have 
\[
\lim_{k \to 0} \frac{c_b(y)(b_j^k)}{\ell(b_j^k)}
= \ell^*(b) \cdot \frac{\nabla^d}{\prod_i (b_i-1)} (y)
= \Theta_b(y),
\]
which is indeed independent of $j$, and thus $c_b(y)'(1) = \Theta_b(y)$.
\end{proof}

\begin{lemm}
\label{cbcoresone}
If $n \geq n(K)$ and $L=K_n$ and $M=K_{n+e}$ and $b$ is a basis of $\Gamma_L$, then $b^p$ is a 
basis of $\Gamma_M$ and $\cor_{M/L} c_{b^p}(y) = c_b(y)$.
\end{lemm}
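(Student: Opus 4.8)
The plan is to use the explicit description of corestriction from Definition \ref{defcor}(3) together with the fact that $b^p$ is a basis of $\Gamma_M$ whenever $b$ is a basis of $\Gamma_L$ and $[L_\infty:M_\infty]$-type considerations force $[\Gamma_L:\Gamma_M]=p^d$. First I would record that $\Gamma_M = \Gamma_L^p$ (since passing from $L=K_n$ to $M=K_{n+e}$ multiplies the relevant $\OO_F$-module by $\pi$, which corresponds to raising to the $p$-th power on the $\Zp$-structure, using $n \geq n(K)$ so that $\chilt$ is injective with image a free $\Zp$-module of rank $d$); hence a set of coset representatives for $\Gamma_L/\Gamma_M$ is $X = \{b_1^{a_1}\cdots b_d^{a_d} : 0 \leq a_i \leq p-1\}$.

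Next I would combine Proposition \ref{constc} with Proposition \ref{lazcor}: both $\cor_{M/L} c_{b^p}(y)$ and $c_b(y)$ are cocycles in $\Zan^1(\Gamma_L, \dfont^{\psiq=1})$, and by Proposition \ref{lazcor} (the isomorphism $\Han^1(\Gamma_L,M) \cong (M/\nabla M)^{\Gamma_L}$ via $c \mapsto c'(1)$, under which $\cor$ corresponds to $\Tr$) two such cocycles are cohomologous if and only if their derivatives at $1$ agree modulo $\nabla \dfont^{\psiq=1}$. By Proposition \ref{constc} we have $c_{b^p}(y)'(1) = \Theta_{b^p}(y)$ and $c_b(y)'(1) = \Theta_b(y)$; under $\cor_{M/L}$ the former becomes $\Tr_{M/L}$-ed, but since $\Theta_{b^p}$ already incorporates the normalizing factor $\ell^*(b^p) = \ell(b_1^p)\cdots\ell(b_d^p)/p^{r(M)} = p^d\ell(b_1)\cdots\ell(b_d)/(p^d \cdot p^{r(L)}) = \ell^*(b)$ and $\nabla^d/\prod(b_i^p-1)$, a short computation using $\nabla = \lim_k (b^{p^k}-1)/p^k\ell(b)$ exactly as in the proof of Lemma \ref{thetr} shows that $\Theta_{b^p} = p^{-d}\,\Theta_b \circ [\Gamma_L:\Gamma_M\text{-averaging}]$, so that $\Tr_{M/L}$ applied to the cocycle recovers $\Theta_b$. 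Thus $\cor_{M/L}c_{b^p}(y)$ and $c_b(y)$ have the same image in $\Han^1$.

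To upgrade this from "cohomologous" to "equal" — which is what the statement of Lemma \ref{cbcoresone} asserts at the level of cocycles — I would instead argue directly at the cocycle level using Definition \ref{defcor}(3). For $g = b_j^k$ with $1 \leq j \leq d$, the permutation $\tau_g$ of $X$ is explicit (it shifts the $a_j$-coordinate cyclically, carrying over a factor of $b_j^p \in \Gamma_M$ each time $a_j+k$ wraps past $p-1$), and plugging into $\cor(f)(g) = \sum_{x \in X}\tau_g(x)\cdot f(\tau_g(x)^{-1}gx)$ with $f = c_{b^p}(y)$ and using the defining formula for $c_{b^p}(y)$ on powers of $b_j^p$, the sum telescopes: the $b_i$-factors for $i \neq j$ contribute $\sum_{a_i}\nabla^{d-1}/\prod_{i\neq j}(b_i-1)$-type terms that reassemble into $\nabla^{d-1}/\prod_{i\neq j}(b_i-1)(y)$, while the $b_j$-geometric-series factor $\sum_{a_j}b_j^{a_j} \cdot (b_j^p)^{\lfloor\cdot\rfloor} = (b_j^k-1)/(b_j-1)$ after using $(b_j^p-1)/(b_j^p-1)$ cancellations. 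Since continuous cocycles on $\Gamma_L$ are determined by their values on the $b_j^k$ (by the cocycle relation and density of such elements), this forces $\cor_{M/L}c_{b^p}(y) = c_b(y)$ on all of $\Gamma_L$.

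The main obstacle I expect is the bookkeeping in the telescoping computation of step three: one must be careful that the "carry" factors of $b_j^p$ produced by $\tau_g$ interact correctly with the operator $\frac{b_j^{kp}-1}{b_j^p-1}$ appearing in $c_{b^p}(y)(b_j^{kp \text{-type powers}})$, and that the elements of $\Gamma_M$ appearing act on $\dfont^{\psiq=1}$ in a way compatible with the operator-algebra identities $\frac{\nabla^{d-1}}{\prod_{i\neq j}(b_i-1)}$ — i.e. one is implicitly working inside the locally $F$-analytic distribution algebra, and the rearrangements are legitimate because all these operators commute. A cleaner alternative, if the direct computation proves too delicate, is to prove equality in $\Han^1$ as in step two and then separately check that the difference cocycle, being a coboundary $g \mapsto (g-1)m$ with $m \in (\dfont^{\psiq=1})$, must vanish because both cocycles manifestly satisfy the same normalization $c(b_j^0) = 0$ and the same first-order data, pinning down $m$ up to $(\dfont^{\psiq=1})^{\Gamma_L}$, and then invoking that the $c_b$ construction is the canonical lift; but the honest route is the explicit telescoping via Definition \ref{defcor}(3).
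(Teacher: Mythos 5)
Your main argument (the third step) is correct and is exactly the paper's proof: compute $\cor_{M/L}c_{b^p}(y)(b_j^k)$ directly from Definition~\ref{defcor}(3), identify the explicit permutation $\tau_{b_j^k}$ of $X$, use $\ell^*(b^p)=\ell^*(b)$ (which follows from $\ell(b_i^p)=p\ell(b_i)$ and $r(M)=r(L)+d$), and reassemble the geometric sums over the $k_i$ into $\prod_{i\neq j}\frac{b_i^p-1}{b_i-1}$ and $\frac{b_j^k-1}{b_j-1}$. The preliminary step via Proposition~\ref{lazcor} establishes only equality in $\Han^1$ and is superfluous once you commit to the cocycle-level computation; the ``cleaner alternative'' you sketch at the end would not actually close that gap, since a coboundary $g\mapsto(g-1)m$ with $\nabla m\in\nabla\dfont^{\psiq=1}$ and $c(1)=0$ (automatic for any cocycle) is not thereby forced to vanish, and appealing to $c_b$ being ``the canonical lift'' is circular. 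But since you explicitly fall back to the honest telescoping computation, the proof stands.
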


\begin{proof}
The Lubin-Tate character maps $\Gamma_L$ to $1 + \pi^n \OO_F$, and $\Gamma_M = \Gamma_L^p$ because $(1 + \pi^n \OO_F)^p = 1 + \pi^{n+e} \OO_F$. Since $\{b_1^{k_1} \cdots b_d^{k_d}\}$ with $0 \leq k_i \leq p-1$ is a set of 
representatives for $\Gamma_L/\Gamma_M$, and since $[M:L]=q^e=p^d$, the 
explicit formula for the corestriction (definition \ref{defcor}) implies (here and elsewhere $\lceil x \rceil$ is the smallest integer $\geq x$)
\begin{align*}
\cor_{M/L} & (c_{b^p}(y)) (b_j^k) \\
&= \sum_{0 \leq k_1, \dots, k_d \leq p-1} b_1^{k_1} \dots b_d^{k_d} \cdot
    \ell^*(b^p) \cdot \frac{b_j^{p \left\lceil\frac{k-k_j}{p}\right\rceil}-1}{b_j^p-1}
    \cdot \frac{\nabla^{d-1}}{\prod_{i \neq j} (b_i^p-1)} (y) \\
&= \ell^*(b) \left(\sum_{k_j=0}^{p-1} b_j^{k_j}
    \frac{b_j^{p \left\lceil\frac{k-k_j}{p}\right\rceil}-1}{b_j^p-1}\right)
    \cdot \left(\prod_{i \neq j} \frac{b_i^p-1}{b_i-1}\right)
    \cdot \frac{\nabla^{d-1}}{\prod_{i \neq j} (b_i^p-1)} (y) \\
&= \ell^*(b) \frac{b_j^k-1}{b_j-1} \cdot \frac{\nabla^{d-1}}{\prod_{i \neq j} (b_i-1)} (y) \\
&= c_b(y)(b_j^k).
\end{align*}

This proves the lemma.
\end{proof}

\begin{lemm}
\label{cbycay}
If $a$ and $b$ are two bases of $\Gamma_K$, then $c_a(y)$ and $c_b(y)$ are cohomologous.
\end{lemm}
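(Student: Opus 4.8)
The plan is to reduce to the situation of Lemma \ref{cbcoresone}, where we already control how the cocycle $c_b(y)$ behaves under change of basis of the form $b \mapsto b^p$. The key point is that both $c_a(y)$ and $c_b(y)$ have the same derivative at the identity, namely $\Theta_b(y) = \Theta_a(y)$ (the operator $\Theta_b$ does not actually depend on the choice of basis $b$, only on $\Gamma_K$, by the very formula defining it together with Proposition \ref{constc}). So under the isomorphism of Proposition \ref{lazcor}, both classes would map to the same element of $(\dfont^{\psiq=1}/\nabla \dfont^{\psiq=1})^{\Gamma_K}$ — except that Proposition \ref{lazcor} is stated for Fréchet spaces, while $\dfont^{\psiq=1}$ is only an LF space. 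I would first check that the relevant part of Proposition \ref{lazcor}, namely injectivity of $c \mapsto c'(1)$ on $\Han^1(\Gamma_K, \dfont^{\psiq=1})$, still holds in the LF setting, or alternatively argue directly as below.

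Here is the direct argument I would carry out. Pass to a large enough level: let $n \geq n(K)$ and consider $L = K_m$ for $m$ large; then $a^{p^{m-?}}$ and $b^{p^{m-?}}$ both become bases of $\Gamma_L$ (more precisely, choose $j$ so that $a^{p^j}$ and $b^{p^j}$ are both bases of the same $\Gamma_L$). By iterating Lemma \ref{cbcoresone}, $\cor_{L/K} c_{a^{p^j}}(y) = c_a(y)$ and similarly for $b$. So it suffices to prove the lemma for two bases $a', b'$ of $\Gamma_L$ with $L$ as small as we like relative to where $a,b$ "separate" — but this does not obviously simplify things, so instead I would directly compare $c_a(y)$ and $c_b(y)$ as cocycles. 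Write $\beta = \Theta_b(y) = \Theta_a(y) \in \dfont^{\psiq=1}$. The cocycle $c_b(y) - c_a(y)$ is an element of $\Zan^1(\Gamma_K, \dfont^{\psiq=1})$ whose derivative at $1$ vanishes. The plan is to show that any such cocycle is a coboundary. For this I would use the same mechanism as in the surjectivity part of the proof of Proposition \ref{lazcor}: a cocycle $c$ with $c'(1) = 0$ satisfies, on a small enough open subgroup $\Gamma_{L}$, $c(g) = (\exp(\ell(g)\nabla)-1)/\nabla \cdot c'(1) = 0$, hence $c|_{\Gamma_L} = 0$, and then $c = [L:K]^{-1} \cor_{L/K}(\res_{K/L}(c)) = 0$ in $\Han^1(\Gamma_K, \dfont^{\psiq=1})$.

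The one delicate point — and the step I expect to be the main obstacle — is making the argument "$c'(1) = 0 \Rightarrow c$ vanishes on a small open subgroup" rigorous for the LF space $\dfont^{\psiq=1}$ rather than a Fréchet or Banach space. Concretely, $\dfont^{\psiq=1}$ is a rising union of Fréchet pieces, and an $F$-analytic cocycle $c : \Gamma_K \to \dfont^{\psiq=1}$, being pro-$F$-analytic, has image landing (at least locally, and in fact globally since $\Gamma_K$ is compact) in one such Fréchet piece $M$; then Proposition \ref{lazcor} applies to $M$ directly and gives injectivity of $c \mapsto c'(1)$ on $\Han^1(\Gamma_K, M)$, and we must only check that the induced map $\Han^1(\Gamma_K, M) \to \Han^1(\Gamma_K, \dfont^{\psiq=1})$ is compatible, so that vanishing upstairs gives vanishing downstairs. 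This compatibility is formal. Hence $c_b(y) - c_a(y)$ maps to $0$ in $\Han^1(\Gamma_K, \dfont^{\psiq=1})$, i.e. $c_a(y)$ and $c_b(y)$ are cohomologous, which is the claim.
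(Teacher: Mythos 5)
Your proposal hinges on the claim that $\Theta_a(y) = \Theta_b(y)$, which you justify by saying ``the operator $\Theta_b$ does not actually depend on the choice of basis $b$.'' This is false, and it is exactly the point the lemma has to grapple with. Writing $b_i - 1 = \exp(\ell(b_i)\nabla) - 1$ on a small enough piece, one has
\[
\Theta_b = p^{-r(K)} \prod_{i=1}^d \frac{\ell(b_i)\nabla}{\exp(\ell(b_i)\nabla)-1},
\]
and each factor $\ell(b_i)\nabla/(\exp(\ell(b_i)\nabla)-1) = F(\ell(b_i)\nabla)$ with $F(X) = X/(\exp(X)-1)$ genuinely depends on $\ell(b_i)$ through the higher-order terms of $F$; already for $d=1$, replacing $b_1$ by $b_1^u$ changes $\Theta_b$ by a nonzero holomorphic function of $\nabla$. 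Lemma \ref{thetr} gives basis-independence only for the \emph{values} of $\Theta_b$ on elements of $F_{m+n}$, not as an operator on all of $\dfont^{\psiq=1}$. So the cocycle $c_a(y) - c_b(y)$ does \emph{not} have vanishing derivative at $1$, and the rest of your argument does not get off the ground as written.

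What is true, and what the paper actually proves, is that $\Theta_a(y) - \Theta_b(y)$ lies in $\nabla M$: the difference
\[
\frac{\alpha_1\cdots\alpha_d\,T^{d-1}}{\prod_i(\exp(\alpha_i T)-1)} - \frac{\beta_1\cdots\beta_d\,T^{d-1}}{\prod_i(\exp(\beta_i T)-1)}
\]
has a simple pole at $T=0$ in each summand with \emph{equal} residues, so the difference is a power series in $F\dcroc{T}$; substituting $\nabla$ for $T$ produces the element $w$ with $\Theta_{a^{p^k}}(y) - \Theta_{b^{p^k}}(y) = p^{-r(L)}\nabla(w)$, after which the injectivity part of Proposition \ref{lazcor} (not the surjectivity part, as you label it, though you do describe the right mechanism) and a corestriction via Lemma \ref{cbcoresone} finish the job. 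Your outline — compare derivatives at $1$, invoke Proposition \ref{lazcor}, handle LF-vs-Fréchet by passing to a stable Fréchet piece — is the right skeleton and matches the paper's; the missing idea is the cancellation of poles, which is the actual content of the lemma. One small further point: the paper sidesteps the LF issue by choosing a $\Gamma_K$-stable Fréchet subspace $N$ of $\dfont$ (not of $\dfont^{\psiq=1}$) containing $y$ and working in its Banach quotients, which is cleaner than your proposed reduction.
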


\begin{proof}
If $\alpha_1,\hdots,\alpha_d$ and $\beta_1,\hdots,\beta_d$ are in $F^\times$, 
the Laurent series
\[ \frac{\alpha_1 \cdots \alpha_d \cdot T^{d-1}}{(\exp(\alpha_1 T)-1) \cdots (\exp(\alpha_d T)-1)}
    - \frac{\beta_1 \cdots \beta_d \cdot T^{d-1}}{(\exp(\beta_1 T)-1) \cdots (\exp(\beta_d T)-1)} \]
is the difference of two Laurent series, each having a simple pole at $0$ 
with equal residues, and therefore belongs to $F\dcroc{T}$. Let $a$ and $b$ 
be two bases of $\Gamma_K$ and take $y \in \dfont^{\psiq=1}$. 

Let $N$ be a $\Gamma_K$-stable Fr\'echet subspace of $\dfont$ that contains $y$ and 
write $N = \varprojlim M_j$.
Since $M = M_j$ is $F$-analytic, we have $g = \exp(\ell(g) \nabla)$ on $M$ 
for $g$ in some open subgroup of $\Gamma_K$. Let $k \gg 0$ be large enough such that 
$a_i^{p^k}$ and $b_i^{p^k}$ are in this subgroup, and let 
$\alpha_i = p^k \ell(a_i)$ and $\beta_i = p^k \ell(b_i)$. Taking $k$ 
large enough (depending on $M$), we can assume moreover that the power 
series $T/(\exp(T) - 1)$ applied to the operators $\alpha_i \nabla$ 
and $\beta_i \nabla$ converges on $M$. The element
\begin{multline*}
w = \bigg( \frac{\alpha_1 \cdots \alpha_d \cdot \nabla^{d-1}}{(\exp(\alpha_1 \nabla)-1) \cdots (\exp(\alpha_d \nabla)-1)}
 - \frac{\beta_1 \cdots \beta_d \cdot \nabla^{d-1}}{(\exp(\beta_1 \nabla)-1) \cdots (\exp(\beta_d \nabla)-1)} \bigg) (y)
\end{multline*}
of $M$ is well defined. By proposition~\ref{constc}, we have
\[
c_{a^{p^k}}(y)'(1) - c_{b^{p^k}}(y)'(1)
= \Theta_{a^{p^k}}(y) - \Theta_{b^{p^k}}(y)
= p^{-r(L)} \nabla(w)
\]
where $L$ is the extension of $K$ such that $\Gamma_L = \Gamma_K^{p^k}$. 
Thus, for $g$ close enough to $1$, we have
$c_{a^{p^k}}(y)(g) - c_{b^{p^k}}(y)(g) = (g-1)(p^{-r(L)} w)$. 
Lemma~\ref{cbcoresone} now implies by corestricting that this holds for 
all $g$, and, by corestricting again, that $c_a(y)$ and $c_b(y)$ are cohomologous in $M$. By varying $M$, we get the same result in $N$, which implies the proposition.
\end{proof}

\begin{lemm}
\label{cbcores}
If $L/K$ is a finite extension contained in $K_\infty$, and if $b$ is a basis of $\Gamma_K$ and 
$a$ is a basis of $\Gamma_L$, then $\cor_{L/K} c_a(y) = c_b(y)$.
\end{lemm}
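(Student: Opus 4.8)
The plan is to reduce the general case to the consecutive-layer case already handled in Lemma~\ref{cbcoresone}, using transitivity of corestriction and the base-change invariance of the cocycle construction under change of basis (Lemma~\ref{cbycay}). The point is that Lemma~\ref{cbcoresone} gives the compatibility $\cor_{K_{n+e}/K_n} c_{b^p}(y) = c_b(y)$ for a \emph{specific} pair of bases, namely a basis $b$ of $\Gamma_{K_n}$ and its $p$-th power $b^p$, which is automatically a basis of $\Gamma_{K_{n+e}}$; combining this with Lemma~\ref{cbycay}, which says that the cohomology class of $c_a(y)$ does not depend on the chosen basis $a$ of $\Gamma_L$, we obtain the statement for arbitrary bases and for $L$ of the form $K_{n+e}$ over $K_n$.

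First I would reduce to the case $K \supseteq F_{n(K)}$, i.e. assume that the hypotheses of Proposition~\ref{constc} hold for both $K$ and $L$ (this is needed even to \emph{define} $c_b(y)$ and $c_a(y)$). Since $L/K$ is finite and contained in $K_\infty$, we have $\Gamma_L = \Gamma_K^{p^m}$ for some $m \geq 0$ once $K \supseteq F_{n(K)}$, so $L = K_{n'}$-type layers are reached by iterating the ``multiply the level by $p$'' step $m/e$ times (replacing $m$ by a multiple of $e$ if necessary; note $[K_{n+1}:K_n]$ interpolates but the clean statement is at steps of size $e$, matching $\Gamma^p$). Concretely: pick a basis $b$ of $\Gamma_K$; then $b^{p}, b^{p^2}, \dots$ are bases of the successive extensions, and iterating Lemma~\ref{cbcoresone} together with transitivity of corestriction ($\cor_{L/K} = \cor_{K_1/K}\circ \cdots$ telescoping along the tower) yields $\cor_{L/K} c_{b^{p^m}}(y) = c_b(y)$, where $b^{p^m}$ is the particular basis of $\Gamma_L$ obtained by raising $b$ to the $p^m$-th power.

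Next I would invoke Lemma~\ref{cbycay} applied over $L$: for any other basis $a$ of $\Gamma_L$, the cocycles $c_a(y)$ and $c_{b^{p^m}}(y)$ are cohomologous in $\dfont^{\psiq=1}$. Since corestriction is well-defined on cohomology (it sends coboundaries to coboundaries), we get $\cor_{L/K} c_a(y) = \cor_{L/K} c_{b^{p^m}}(y) = c_b(y)$ as cohomology classes, which is the assertion of the lemma. One should note the lemma is naturally a statement about cohomology classes (the $c$'s are only well-defined cocycles once a basis is fixed, and change of basis only preserves the class), so ``$\cor_{L/K} c_a(y) = c_b(y)$'' is to be read in $\Han^1(\Gamma_K, \dfont^{\psiq=1})$.

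The main obstacle is purely bookkeeping: making sure the tower $K = K_{n(K)} \subseteq \cdots \subseteq L$ can be traversed by steps to which Lemma~\ref{cbcoresone} literally applies (which is phrased for the step $K_n \rightsquigarrow K_{n+e}$, i.e. $\Gamma \rightsquigarrow \Gamma^p$), and keeping track that at each step the ``preferred'' basis is the $p$-th power of the previous preferred basis so that Lemma~\ref{cbcoresone} is applicable, with Lemma~\ref{cbycay} absorbing the discrepancy at the top between $a$ and $b^{p^m}$. If $L/K$ does not correspond to a power of $p$ in $\Gamma_K$ (e.g. an intermediate layer $K_{n+1}/K_n$ when $e>1$), one either enlarges the tower or observes that $\Gamma_L$ is still of finite index in $\Gamma_K$ and open, so $\Gamma_L \supseteq \Gamma_K^{p^m}$ for some $m$; then factor $\cor_{L/K} = \cor_{L/K} \circ \cor_{L'/L}^{-1} \cdot (\text{restriction trick})$—but more simply, $\cor_{L/K} \circ \res_{L} = [L:K]$ lets one reduce any such $L$ to a cofinal subtower of the $\Gamma^{p^m}$'s, and Lemma~\ref{cbycay} again handles the basis change. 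There is no genuine mathematical difficulty beyond what is already in Lemmas~\ref{cbcoresone} and~\ref{cbycay}; this lemma is their formal consequence.
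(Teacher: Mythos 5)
Your plan is to reduce the general lemma to the consecutive-layer case (Lemma~\ref{cbcoresone}) plus basis-change invariance (Lemma~\ref{cbycay}). This works cleanly when $\Gamma_L = \Gamma_K^{p^m}$ for some $m$: iterate Lemma~\ref{cbcoresone} along the tower $\Gamma_K \supset \Gamma_K^p \supset \cdots \supset \Gamma_K^{p^m}$ to get $\cor_{L/K} c_{b^{p^m}}(y) = c_b(y)$, then use Lemma~\ref{cbycay} over $L$ to replace $b^{p^m}$ by $a$. But there is a genuine gap when $\Gamma_L$ is not of this form, and for $e > 1$ this is the generic situation. Indeed, $\Gamma_K / \Gamma_{K_j}$ is, via $\chilt$ and $\log$, isomorphic to $\OO_F / \pi^{j-n}$ as an abelian group, and this is of the form $(\ZZ/p^m)^d$ only when $e \mid j - n$; for other $j$ the elementary divisors of $\Gamma_{K_j}$ inside $\Gamma_K$ are not all equal, and no basis of $\Gamma_{K_j}$ is a uniform $p^m$-th power of a basis of $\Gamma_K$. (For example, $K_{n+1}/K_n$ has degree $q = p^f$ while $\Gamma_{K_n}/\Gamma_{K_n}^p$ has order $p^d = p^{ef}$.) In this situation one cannot traverse $L/K$ by steps to which Lemma~\ref{cbcoresone} applies.

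You notice this obstacle at the end, but the proposed fixes do not close the gap. Going up to $L' \supset L$ with $\Gamma_{L'} = \Gamma_K^{p^m}$ and using transitivity $\cor_{L'/K} = \cor_{L/K}\circ\cor_{L'/L}$ would require knowing $\cor_{L'/L} c_{b^{p^m}}(y) = c_a(y)$, which is precisely an instance of the lemma being proved for $L'/L$ — and $\Gamma_{L'}$ is not $\Gamma_L^{p^{m'}}$ for any $m'$ either (that would force all elementary divisors to be equal), so iterating Lemma~\ref{cbcoresone} from $L$ also fails to reach $L'$. The identity $\cor_{L/K}\circ\res_L = [L:K]$ doesn't immediately help: you would need to show $\res_L(c_b(y)) \sim [L:K]\,c_a(y)$ in $\Han^1(\Gamma_L,\dfont^{\psiq=1})$, which is again a statement of roughly the same depth and for which you give no argument.

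The paper does something different. It does not reduce to Lemma~\ref{cbcoresone} but rather generalizes the computation there: by the elementary divisors theorem one may change bases so that $a_i = b_i^{p^{e_i}}$ with possibly distinct exponents $e_i$, and then the corestriction is computed directly from Definition~\ref{defcor}, exactly as in Lemma~\ref{cbcoresone} but with each $j$-th factor now a telescoping sum of length $p^{e_j}$ rather than $p$. The dependence on the choice of $a$ and $b$ is then eliminated by Lemma~\ref{cbycay}. So Lemma~\ref{cbcores} is a computational generalization of Lemma~\ref{cbcoresone}, not a formal consequence of it; the ``mixed'' elementary divisors must be handled inside the corestriction sum itself, not by staging it through $p$-th-power subgroups.
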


\begin{proof}
The groups $\Gamma_K$ and $\Gamma_L$ are both free $\Zp$-modules of rank~$d$, 
so that by the elementary divisors theorem, we can 
change the bases $a$ and $b$ in such a way that there exists $e_1,\hdots,e_d$ 
with $a_i = b_i^{p^{e_i}}$.

Since $\{b_1^{k_1} \cdots b_d^{k_d}\}$ with $0 \leq k_i \leq p^{e_i}-1$ is a 
set of representatives for $\Gamma_K/\Gamma_L$, and since 
$[L:K]=p^{e_1+\cdots+e_d}$, the explicit formula for the corestriction implies

\begin{align*}
\cor_{L/K} & (c_a(y))(b_j^k) \\
&= \sum_{\substack{0 \leq k_1 \leq p^{e_1}-1 \\ \dots \\ 0 \leq k_d \leq p^{e_d}-1}}
    b_1^{k_1} \dots b_d^{k_d} \cdot \ell^*(a) \cdot 
    \frac{a_j^{\left\lceil\frac{k-k_j}{p^{e_j}}\right\rceil}-1}{a_j-1} \cdot 
    \frac{\nabla^{d-1}}{\prod_{i \neq j} (a_i-1)} (y) \\
&= \ell^*(b) \cdot \left(\sum_{k_j=0}^{p^{e_j}-1} 
    \frac{a_j^{\left\lceil\frac{k-k_j}{p^{e_j}}\right\rceil}-1}{a_j-1}\right) \cdot 
    \left(\prod_{i \neq j} \frac{a_i-1}{b_i-1}\right) \cdot
    \frac{\nabla^{d-1}}{\prod_{i \neq j} (a_i-1)} (y) \\
&= \ell^*(b) \cdot \frac{b_j^k-1}{b_j-1} \cdot
    \frac{\nabla^{d-1}}{\prod_{i \neq j} (b_i-1)} (y) \\
&= c_b(y)(b_j^k).
\end{align*}
\end{proof}

\begin{defi}
\label{defhun}
Let $h^1_{K,V} : \drig{}(V)^{\psiq=1} \to \Han^1(K,V)$ denote the map obtained by composing $y \mapsto \overline{c}_b(y)$ with $\Han^1(\Gamma_K,\drig{}(V)^{\psiq=1}) \to  \Han^1(\Gamma_K \times \Psi, \drig{}(V))$ (theorem \ref{infrespsi}) and with $ \Han^1(\Gamma_K \times \Psi, \drig{}(V)) \simeq \Han^1(K,V)$ (proposition \ref{h1anextv} and corollary \ref{cohphipsi}).
\end{defi}

\begin{prop}
\label{h1corc}
We have $\cor_{M/L} \circ h^1_{M,V} = h^1_{L,V}$ if $M/L$ is a finite extension contained in $K_\infty/K_{n(K)}$. In particular, $\cor_{K_{n+1}/K_n} \circ h^1_{K_{n+1},V} = h^1_{K_n,V}$ if $n \geq n(K)$.
\end{prop}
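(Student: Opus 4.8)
The plan is to show that corestriction is compatible with each of the constructions that, by definition~\ref{defhun}, compose to give $h^1_{K,V}$, and then to invoke lemma~\ref{cbcores}. Recall that $h^1_{K,V}(y)$ is the image of $\overline{c}_b(y) \in \Han^1(\Gamma_K, \drig{}(V)^{\psiq=1})$ under, successively, (i) the injection $\Han^1(\Gamma_K, \drig{}(V)^{\psiq=1}) \to \Han^1(\Gamma_K \times \Psi, \drig{}(V))$ of theorem~\ref{infrespsi}, (ii) the isomorphism $\Han^1(\Gamma_K \times \Psi, \drig{}(V)) \simeq \Han^1(\Gamma_K \times \Phi, \drig{}(V))$ of corollary~\ref{cohphipsi}, and (iii) the isomorphism $\Han^1(\Gamma_K \times \Phi, \drig{}(V)) \simeq \Han^1(K,V)$ of proposition~\ref{h1anextv}. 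So I would fix a basis $b$ of $\Gamma_L$ and a basis $a$ of $\Gamma_M$; lemma~\ref{cbcores} (applied with $L$ in the role of its ``$K$'' and $M$ in the role of its ``$L$'') gives $\cor_{M/L}\,\overline{c}_a(y) = \overline{c}_b(y)$ in $\Han^1(\Gamma_L, \drig{}(V)^{\psiq=1})$, and it then remains to prove that (i), (ii) and (iii) each fit into a square commuting with the corestriction maps for the inclusions $\Gamma_M \subset \Gamma_L$, $\Gamma_M \times \Psi \subset \Gamma_L \times \Psi$, $\Gamma_M \times \Phi \subset \Gamma_L \times \Phi$ and $G_M \subset G_L$. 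Chaining these commuting squares with lemma~\ref{cbcores} then yields $\cor_{M/L}\circ h^1_{M,V} = h^1_{L,V}$, and the last assertion follows by taking $L=K_n$ and $M=K_{n+1}$, which is a finite extension inside $K_\infty/K_{n(K)}$ as soon as $n \geq n(K)$.

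For (i) and (ii) this should be a routine cochain-level verification. Both maps are given by explicit formulas on $\Zan^1$ involving only $\phiq$, $\psiq$ and the module structure of $\drig{}(V)$: for (i), a cocycle on $\Gamma_L$ valued in $\drig{}(V)^{\psiq=1}$ is extended by the value $0$ at $\psiq$; for (ii), one uses the mutually inverse operators $T$ and $U$ from the proof of corollary~\ref{cohphipsi}. A set $X \subset \Gamma_L$ of representatives of $\Gamma_L/\Gamma_M$ also represents the cosets for the three semigroup inclusions, and since $\phiq$ and $\psiq$ commute with $\Gamma_L$, for $g \in \{\phiq,\psiq\}$ the permutation $\tau_g$ of definition~\ref{defcor}(3) is the identity; hence $\cor_{M/L}$ acts on the $\phiq$- and $\psiq$-components simply as $\sum_{x\in X} x\cdot(-)$. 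One then checks directly that $\cor_{M/L}$ commutes with ``extension by $0$ at $\psiq$'' and with $T$ and $U$; for $U$ the only point is that, by linearity of the $\Gamma_L$-action and the uniqueness in theorem~\ref{colpsizero}, the element $m$ attached to $\cor_{M/L}f$ equals $\sum_{x\in X} x\cdot m_f$.

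The step (iii) is, I expect, the real obstacle. Here I would transport the Galois corestriction $\cor_{M/L}\colon\cH^1(M,V)\to\cH^1(L,V)$ across the equivalence of theorem~\ref{eqcat}. Describing corestriction via Shapiro's lemma (definition~\ref{defcor}(1)), $\cor_{M/L}$ is the composite of the Shapiro isomorphism $\cH^1(M,V)\simeq\cH^1(L,\ind_{G_M}^{G_L}V)$ with the map induced by the summation $\ind_{G_M}^{G_L}V\to V$; since $V\mapsto\drig{}(V)$ is exact and compatible with the base change $\brig{}{,M}\otimes_{\brig{}{,K}}(-)$, it carries $\ind_{G_M}^{G_L}V$ to the corresponding induced $(\phi,\Gamma)$-module, and the identification of $\Han^1(\Gamma_K\times\Phi,\drig{}(V))$ with the group of $F$-analytic extensions of $F$ by $V$ (proposition~\ref{h1anextv}) is compatible with induction; the square then commutes. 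The delicate point is to check that the ``$F$-analytic extensions of $F$ by $V$'' description is functorial for induction and restriction in the required way — alternatively, one may instead characterize $\cor_{M/L}$ on all four cohomology groups by the relation $\cor_{M/L}\circ\res_{L/M}=[M:L]$ together with transitivity in towers, and deduce its compatibility with (iii) from the more transparent compatibility of $\res$ with the equivalence of theorem~\ref{eqcat} on $F$-analytic classes.
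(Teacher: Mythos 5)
Your proposal is correct and follows the same route as the paper. The paper's proof is a single line---``This follows from the definition and from lemma \ref{cbcores} above''---and your write-up simply unpacks that, spelling out why each of the three auxiliary maps in definition \ref{defhun} commutes with corestriction before invoking lemma \ref{cbcores}.
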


\begin{proof}
This follows from the definition and from lemma \ref{cbcores} above.
\end{proof}

\begin{rema}
\label{defhkall}
Proposition \ref{h1corc} allows us to extend the definition of $h^1_{K,V}$ to all $K$, without assuming that $K$ contains $F_{n(K)}$, by corestricting.
\end{rema}

Some of the constructions of this section are summarized in the following theorem. Recall (see \S 3 of \cite{PGMLAV}) that there is a ring $\btrig{}{}$ that contains $\brig{}{,F}$, is equipped with a Frobenius map $\phiq$ and an action of $G_F$ and such that $V = (\btrig{}{} \otimes_{\brig{}{,F}} \drig{}(V))^{\phiq=1}$.

\begin{theo}
\label{hunkv}
If $y \in \drig{}(V)^{\psiq=1}$ and $K$ contains $K_{n(K)}$ and $b$ is a basis of $\Gamma_K$, then 
\begin{enumerate}
\item there is a unique $c_b(y) \in \Zan^1(\Gamma_K,\drig{}(V)^{\psiq=1})$ such that for $k \in \Zp$,
\[ c_b(y)(b_j^k) = \ell^*(b) \cdot \frac{b_j^k-1}{b_j-1} \cdot \frac{\nabla^{d-1}}{\prod_{i \neq j} (b_i-1)} (y); \]
\item there is a unique $m_c \in \drig{}(V)^{\psiq=0}$ such that $(\phiq -1)c_b(y)(g) = (g-1) m_c$ for all $g \in \Gamma_K$; 
\item the $(\phi,\Gamma)$-module corresponding to this extension has a basis in which
\[ \Mat(g) = \pmat{\ast & c_b(y)(g) \\ 0 & 1}\text{ if $g \in \Gamma_K$,} \qquad \text{and}\qquad  \Mat(\phiq) = \pmat{\ast & m_c \\ 0 & 1}; \]
\item if $z \in \btrig{}{} \otimes_F V$ is such that $(\phiq-1)z = m_c$, then the cocycle
\[ g \mapsto c_b(y)(g) - (g-1)z \]
defined on $G_K$ has values in $V$ and represents $h^1_{K,V}(y)$ in $\Han^1(K,V)$.
\end{enumerate}
\end{theo}

\begin{proof}
Items (1), (2) and (3) are reformulations of the constructions of this chapter. Let us prove (4). Let us write the $(\phi,\Gamma)$-module corresponding to the extension in (3) as $\dfont' = \drig{}(V) \oplus \brig{}{,F} \cdot e$. It is an \'etale $(\phi,\Gamma)$-module that comes from the $p$-adic representation $V' = (\btrig{}{} \otimes_{\brig{}{,F}} \dfont')^{\phiq=1}$. We have $V' = V \oplus F \cdot (e-z)$ as $F$-vector spaces since $\phiq(e-z)=e-z$. If $g \in G_K$, then 
\[ g(e-z)= e + c_b(y)(g) - g(z)= e-z + c_b(y)(g) -(g-1)z. \] This proves (4).
\end{proof}

Let $F=\Qp$ and $\pi=p=q$, and let $V$ be a representation of $G_K$. In \S II.1 of \cite{CC99}, Cherbonnier and Colmez define a map $\Log^*_{V^*(1)} : \ddag{}(V)^{\psi=1} \to \Hiw^1(K,V)$, which is an isomorphism (theorem II.1.3 and proposition III.3.2 of \cite{CC99}).

\begin{prop}\label{compcc}
If $F=\Qp$ and $\pi=p$, then the map 
\[ \ddag{}(V)^{\psi=1} \to \drig{}(V)^{\psi=1} \xrightarrow{\{h^1_{K_n,V}\}_{n \geq 1}} \projlim_n \Han^1(K_n,V) \to \projlim_n \cH^1(K_n,V) \] 
coincides with the map $\Log^*_{V^*(1)} : \ddag{}(V)^{\psi=1} \to \Hiw^1(K,V) \subset \projlim_n \cH^1(K_n,V)$.
\end{prop}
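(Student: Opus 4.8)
The plan is to compare the two constructions of Iwasawa cohomology classes \emph{at a fixed finite level} and then invoke the fact that the corestriction-compatible families they produce are detected by a single level. Both maps target $\projlim_n \cH^1(K_n,V)$, so it suffices to check that their $n$-th components agree for all $n$ large enough; by corestriction compatibility (proposition \ref{h1corc} for our map, theorem II.1.3 of \cite{CC99} for $\Log^*_{V^*(1)}$) it is in fact enough to match them for $n$ in a cofinal set. First I would recall, from \S II.1 of \cite{CC99}, the explicit cocycle representing $\Log^*_{V^*(1)}(y)_{K_n}$ for $y \in \ddag{}(V)^{\psi=1}$: Cherbonnier--Colmez build it from the same datum, namely a choice of $z$ (in the appropriate period ring) solving $(\phiq-1)z = (\phiq-1)y = $ (a $\psi=0$ element), together with the ``$1-\phi$''/$\psi=1$ formalism; the resulting class in $\cH^1(K_n,V)$ is represented by a cocycle of the shape $g \mapsto (\text{something built from } y) - (g-1)z$ on $G_{K_n}$.

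Next I would unwind Definition \ref{defhun} and theorem \ref{hunkv}(4) in the cyclotomic case $F=\Qp$, $\pi=p$. Here $\Gamma_K$ has rank $d=1$ (if $K$ contains enough of the cyclotomic tower), so $b$ is a single topological generator, $\ell^*(b) = \ell(b)/p^{r(K)}$, and the cocycle of proposition \ref{constc} degenerates to $c_b(y)(b^k) = \ell^*(b)\cdot \frac{b^k-1}{b-1}(y)$, i.e.\ essentially the standard Lazard-type cocycle attached to $y$ on $\Gamma_K$. Passing through theorem \ref{infrespsi} and corollary \ref{cohphipsi} replaces this $\Gamma_K$-cocycle by a $\Gamma_K\times\Psi$-cocycle $f$ with $f(\psiq) = $ (the canonical $\psi=1$ witness), and the isomorphism $\Han^1(\Gamma_K\times\Psi,\drig{}(V)) \simeq \Han^1(K,V)$ of proposition \ref{h1anextv} produces exactly the $G_K$-cocycle $g \mapsto c_b(y)(g) - (g-1)z$ of theorem \ref{hunkv}(4). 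I would then observe that, specialized to $F=\Qp$, $\pi=p$, the ring $\btrig{}{}$ and the element $z$ here are literally the objects used in \cite{CC99}, and that $m_c = (\phiq-1)y$ since $\psiq(y)=y$ forces $c_b(y)$ to be cohomologous (via the $\Gamma_K$-part) to the cocycle whose $\phiq$-component is $(\phiq-1)y$. So both recipes produce cohomologous $G_{K_n}$-cocycles: the $\Gamma_{K_n}$-component is the Lazard cocycle of $y$ and the $\phiq$-component is $(\phiq-1)y$, and $z$ is the common correction term. Finally I would check that the natural map $\Han^1(K_n,V) \to \cH^1(K_n,V)$ carries our class to the Cherbonnier--Colmez class; this is where one uses that in the cyclotomic setting every cocycle is automatically ``analytic'' in the relevant sense (the $F=\Qp$ case of theorem \ref{eqcat}, together with proposition \ref{h1anextv}), so the inclusion $\Han^1 \subset \cH^1$ loses no information.

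The main obstacle I anticipate is purely bookkeeping: matching the normalizations. The map $\Log^*_{V^*(1)}$ of \cite{CC99} is defined up to conventions about $\psi$ vs.\ $\varphi$, about which torsion points $u_n$ are chosen, and about signs in the ``$-\psiq(f(\phiq))$'' and ``$m_f$'' formulae of corollary \ref{cohphipsi}; the identity $[h \mapsto (\phiq-1)f(h)] = [h \mapsto (h-1)f(\phiq)]$ in $\cB^1$ has to be threaded through carefully to see that no spurious scalar appears. I would isolate this by first doing the comparison on a single explicit example — e.g.\ $V = \Qp(1)$, where $\ddag{}(V)^{\psi=1}$ is understood and $\Log^*$ is classical Kummer theory — to pin down the constant, and then give the general argument cocycle-by-cocycle. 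Because both sides are $\Zp$-linear and continuous, and $\ddag{}(V)^{\psi=1}$ is topologically generated in a controlled way, matching on such generators plus the corestriction compatibility of both families suffices; the potential sign/normalization constant, once checked to be $1$ on the test case, propagates.
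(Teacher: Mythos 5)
Your overall strategy is the paper's: specialize to $F=\Qp$, $\pi=p$, write out the explicit cocycle of theorem~\ref{hunkv}(4), write out Cherbonnier--Colmez's $\iota_{\psi,n}$, and match them term by term. That is exactly what the published proof does (it quotes the formula $\iota_{\psi,n}(x)=\bigl[\sigma\mapsto \ell_{K_n}(\gamma_n)\bigl(\tfrac{\sigma-1}{\gamma_n-1}x-(\sigma-1)b\bigr)\bigr]$ from \S II.1 of \cite{CC99} and compares with theorem~\ref{hunkv}). So the approach is sound, and the preliminary reduction via corestriction compatibility, while not used in the paper, is a legitimate simplification.

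However, two of your intermediate identifications are wrong, and since you announce you will ``give the general argument cocycle-by-cocycle,'' they are the heart of the matter rather than side bookkeeping. First, you recall Cherbonnier--Colmez's correction term $z$ as solving $(\phiq-1)z=(\phiq-1)y$; in fact their $b$ solves $(\phi-1)b=a$ where $a$ is defined by $(\gamma_n-1)a=(\phi-1)x$, so there is an extra $(\gamma_n-1)^{-1}$ (legitimate because $(\phi-1)x\in\ddag{}(V)^{\psi=0}$ and $\gamma_n-1$ is invertible there), plus the overall normalization $\ell_{K_n}(\gamma_n)$. Second, and symmetrically, your claim ``$m_c=(\phiq-1)y$'' does not follow from theorem~\ref{hunkv}(2): with $d=1$ and $c_b(y)(b^k)=\ell^*(b)\tfrac{b^k-1}{b-1}y$, the defining relation $(\phi-1)c_b(y)(g)=(g-1)m_c$ forces
\[ m_c \;=\; \ell^*(b)\,(b-1)^{-1}(\phi-1)y, \]
not $(\phi-1)y$. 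Once this is corrected, the match with \cite{CC99} is clean: taking $b=\gamma_n$, one has $\ell^*(\gamma_n)=\ell_{K_n}(\gamma_n)$, hence $m_c=\ell_{K_n}(\gamma_n)\,a$ and $z=\ell_{K_n}(\gamma_n)\,b$, and the two cocycles
\[ \sigma\mapsto c_{\gamma_n}(y)(\sigma)-(\sigma-1)z \qquad\text{and}\qquad \sigma\mapsto \ell_{K_n}(\gamma_n)\Bigl(\tfrac{\sigma-1}{\gamma_n-1}y-(\sigma-1)b\Bigr) \]
coincide literally, with no spurious constant to pin down on a test case. Your fallback plan (compute on $V=\Qp(1)$ to fix the constant) would in practice have caught this, but as written the central identity is false and the ``constant'' you worry about would be an operator $\ell^*(b)(b-1)^{-1}$ rather than a scalar, so the ``propagation'' step of your last paragraph does not go through without the correction above.
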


\begin{proof}
The map $\Log^*_{V^*(1)}$ is contructed by mapping $x \in \ddag{}(V)^{\psi=1}$ to the sequence  
$(\dots, \iota_{\psi,n}(x), \dots) \in \projlim_n \cH^1(K_n,V)$ (see theorem~II.1.3 in \cite{CC99} 
and the paragraph preceding it), where
\[ \iota_{\psi,n}(x) = \left[ \sigma \mapsto \ell_{K_n}(\gamma_n) \left(
\frac{\sigma-1}{\gamma_n-1} x - (\sigma-1) b
\right) \right] \]
on $G_{K_n}$ and where (see proposition I.4.1, lemma I.5.2 and lemma I.5.5 of ibid.)
\begin{enumerate}
\item $\gamma_n = \gamma_1^{[K_n:K_1]}$ and $\gamma_1$ is a fixed generator of $\Gamma_{K_1}$;
\item $\ell_{K_n}(\gamma_n) = \frac{\log \chi(\gamma_n)}{p^{r(K_n)}}$ where $r(K_n)$ is the integer 
such that $\log \chi(\Gamma_{K_n}) = p^{r(K_n)} \Zp$;
\item $b \in \btdag{} \otimes_{\Qp} V$ is such that $(\phi-1) b = a$ and $a \in \ddag{}(V)^{\psi=1}$ 
is such that $(\gamma_n-1) a = (\phi-1) x$ (using the fact that $\gamma_n-1$ is bijective on 
$\ddag{}(V)^{\psi=0}$).
\end{enumerate}
The theorem follows from comparing this with the explicit formula of theorem \ref{hunkv}.
\end{proof}

\section{Explicit formulas for crystalline representations}
\label{formsec}

In this chapter, we explain how the constructions of the previous chapter are related to $p$-adic Hodge theory, via Bloch and Kato's exponential maps. Let $\bdr$ be Fontaine's ring of periods \cite{FPP} and let $\bmax{F}^+$ be the subring of $\bdr^+$ that is constructed in \S 8.5 of \cite{C02} (recall that $\bmax{F}^+ = F \otimes_{F_0} \bfont_{\max}^+$ where $F_0 = F \cap \Qp^{\unr}$ and $\bfont_{\max}^+$ is a ring that is similar to Fontaine's $\bfont_{\mathrm{cris}}$).

We assume throughout this chapter that $K=F$ and that the representation $V$ is crystalline and $F$-analytic.

\subsection{Crystalline $F$-analytic representations}
\label{kremind}

If $V$ is an $F$-analytic crystalline representation of $G_F$, let $\dcris(V) = (\bmax{F} \otimes_F V)^{G_F}$ (this is the ``component at identity'' of the usual $\dcris$). By corollary 3.3.8 of \cite{KR09}, $F$-analytic crystalline representations of $G_F$ are overconvergent. Moreover, if $\mathcal{M}(D) \subset \brigplus{,F}[1/t_\pi] \otimes_F D$ is the object constructed in \S 2.2 of ibid., then by \S 2.4 of ibid., $\mathcal{M}(\dcris(V))$ contains a basis of $\ddag{}(V)$ and $\drig{}(V) = \brig{}{,F} \otimes_{\brigplus{,F}} \mathcal{M}(\dcris(V))$. This implies that $\drig{}(V) \subset \brig{}{,F}[1/t_\pi] \otimes_F \dcris(V)$. 

\begin{theo}
\label{psiplus}
We have $\drig{}(V)^{\psiq=1} \subset \brigplus{,F}[1/t_\pi] \otimes_F \dcris(V)$.
\end{theo}

\begin{proof}
Take $h \geq 0$ such that the slopes of $\pi^{-h} \phiq$ on $\dcris(V)$ are $\leq -d$. Let $E$ be an extension of $F$ such that $E$ contains the eigenvalues of $\phiq$ on $\dcris(V)$. We show that $\drig{}(V)^{\psiq=1} \subset t_\pi^{-h} E \otimes_F \brigplus{,F} \otimes_F \dcris(V)$. Let $e_1,\hdots,e_n$ be a basis of $t_\pi^{-h} E \otimes_F \dcris(V)$ in which the matrix $(p_{i,j})$ of $\phiq$ is upper triangular. If $y = \sum_{i=1}^d y_i \otimes \phiq(e_i)$ with $y_i \in E \otimes_F \brig{}{,F}$, then $\psiq(y)=y$ if and only if $\psiq(y_k) = p_{k,k} y_k + \sum_{j > k} p_{k,j} y_j$ for all $k$. The theorem follows from applying lemma \ref{psireg} below to $k=n,n-1,\hdots,1$.
\end{proof}

\begin{lemm}
\label{psireg}
Take $y \in E \otimes_F \brig{}{,F}$ and $\alpha \in F$ such that $\val_\pi(\alpha) \leq -d$. If $\psiq(y)- \alpha y \in E \otimes_F \brigplus{,F}$, then $y \in E \otimes_F \brigplus{,F}$.
\end{lemm}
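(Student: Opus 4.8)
The plan is to prove that $y$ has no ``minus part'', by an iteration argument modeled on the proof of lemma~\ref{psiminusbiga}. All statements about $\psiq$, $\phiq$ and the rings below are over $F$ and extend by $E$-linearity, so I suppress $E \otimes_F (-)$ from the notation.

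First I would reduce to the case where $y = \sum_{i < 0} a_i T^i$ is the minus part of an element of $\brig{}{,F}$, so that in particular $y \in \bdag{}_F$. Indeed, writing a general $y$ as $y = y^{+} + y^{-}$ with $y^{+} \in \brigplus{,F}$ its plus part and $y^{-} \in \bdag{}_F$ its minus part, the operator $\psiq$ preserves $\brigplus{,F}$ (see \S\ref{nota}), so $\psiq(y^{+}) - \alpha y^{+} \in \brigplus{,F}$; hence $\psiq(y^{-}) - \alpha y^{-} = (\psiq(y) - \alpha y) - (\psiq(y^{+}) - \alpha y^{+}) \in \brigplus{,F}$. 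Since $\psiq$ also preserves $\bdag{}_F$, the element $g := \psiq(y^{-}) - \alpha y^{-}$ lies in $\bdag{}_F \cap \brigplus{,F} = \OO_F\dcroc{T}[1/\pi]$, the ring of power series bounded on the open unit disk. It then suffices to show $y^{-} = 0$.

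Next, from $\alpha y^{-} = \psiq(y^{-}) - g$ one gets, applying $\psiq$ repeatedly, the identity
\[ \alpha^{-n} \psiq^{n}(y^{-}) = y^{-} + \sum_{k=0}^{n-1} \alpha^{-(k+1)} \psiq^{k}(g) \qquad (n \geq 1). \]
The key input is the estimate already used in the proof of lemma~\ref{psiminusbiga}: for $f \in \bdag{}_F$ one has $\psiq^{n}(f) \in \bfont_F^{(0,r]}$ for $n$ large (proposition~1.4 of~\cite{LT}) and the sequence $((q/\pi)\psiq)^{n}(f)$ is bounded in $\bfont_F^{(0,r]}$ (proposition~2.4(d) of~\cite{FX13}). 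Since $\val_\pi(q/\pi) = d-1$ and $\val_\pi(\alpha) \leq -d$, the valuation $\val_\pi$ of $\alpha^{-n}\psiq^{n}(f)$ therefore grows at least linearly in $n$. Applied to $f = y^{-}$ this gives $\alpha^{-n}\psiq^{n}(y^{-}) \to 0$; applied to $f = g$ it gives that $\sum_{k \geq 0} \alpha^{-(k+1)}\psiq^{k}(g)$ converges. Because $\psiq$ maps $\OO_F\dcroc{T}[1/\pi]$ into itself and, for a power series supported in non-negative powers of $T$, the $\bfont_F^{(0,r]}$-valuation coincides with the infimum of the valuations of its coefficients, this series in fact converges in $\OO_F\dcroc{T}[1/\pi] \subset \brigplus{,F}$. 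Passing to the limit in the displayed identity yields $y^{-} = -\sum_{k \geq 0} \alpha^{-(k+1)}\psiq^{k}(g) \in \brigplus{,F}$; but $y^{-}$ is supported in strictly negative powers of $T$, so $y^{-} = 0$, and hence $y = y^{+} \in \brigplus{,F}$.

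The step I expect to require the most care is the passage between topologies in the last paragraph: the cited boundedness estimates take place in the Banach spaces $\bfont_F^{(0,r]}$ of functions bounded on a fixed half-open annulus near the boundary of the unit disk, whereas the conclusion concerns the Fréchet ring $\brigplus{,F}$; the bridge is the elementary observation that a power series supported in non-negative powers of $T$ which is bounded on one such annulus is automatically bounded on the whole open disk, so that $\bdag{}_F \cap \brigplus{,F} = \OO_F\dcroc{T}[1/\pi]$ and the relevant valuations agree on it. The remaining facts — that $\psiq$ stabilizes $\bdag{}_F$, $\brigplus{,F}$ and hence their intersection, and the formal iteration identity — are routine.
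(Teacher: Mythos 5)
Your argument is correct. Note first that the paper gives no in-text proof of this lemma: its ``proof'' is the one-line citation ``This is lemma~5.4 of~\cite{FX13}'', so the comparison is really against the nearby lemma~\ref{psiminusbiga}, whose proof you explicitly model yours on, and against what one expects in~\cite{FX13}. Your reduction to the minus part is valid ($\psiq$ stabilizes $\brigplus{,F}$ and $\bdag{}_F$, both stated in \S\ref{nota}, and for $y\in\brig{}{,F}$ the plus part does lie in $\brigplus{,F}$ and the minus part in $\bdag{}_F$); the identification $\bdag{}_F\cap\brigplus{,F}=\OO_F\dcroc{T}[1/\pi]$ is correct; the iteration identity is right; and the numerical estimate $\val_\pi(q/\pi)=d-1<-\val_\pi(\alpha)$, combined with proposition~1.4 of~\cite{LT} and proposition~2.4(d) of~\cite{FX13} (the same two inputs the paper uses for lemma~\ref{psiminusbiga}), gives the needed decay of $\alpha^{-n}\psiq^n$. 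The extra point you flag --- that for a series supported in non-negative powers the $\bfont_F^{(0,r]}$-valuation equals the infimum of the coefficient valuations, so that the limit lands in $\OO_F\dcroc{T}[1/\pi]\subset\brigplus{,F}$ --- is exactly the bridge needed, and you handle it correctly; then comparing Laurent expansions in $\bdag{}_F$ forces $y^-=0$. In short: a correct, self-contained proof, in the same spirit as (and using the same estimates as) the paper's proof of lemma~\ref{psiminusbiga}, supplied where the paper merely cites~\cite{FX13}.
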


\begin{proof}
This is lemma 5.4 of \cite{FX13}.
\end{proof}

\subsection{Bloch-Kato's exponentials for analytic representations}
\label{fanexp}
We now recall the definition of Bloch-Kato's exponential map and its dual, and give a similar definition for $F$-analytic representations.

\begin{lemm}
\label{fundfan}
We have an exact sequence 
\[ 0 \to F \to (\bmax{F}^+[1/t_\pi])^{\phiq=1} \to \bdr/\bdr^+ \to 0. \]
\end{lemm}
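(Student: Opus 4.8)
The plan is to mimic the classical fundamental exact sequence of $p$-adic Hodge theory (the ``$t$-adic'' sequence for $\bmax{}$), adapted to the Lubin-Tate period $t_\pi$ and the operator $\phiq$. First I would fix the basic relation $\phiq(t_\pi) = \pi t_\pi$, which is recalled in \S\ref{nota}, and note that $\bmax{F}^+$ contains $F$ with $\phiq$ acting as the identity there (since $\bmax{F}^+ = F \otimes_{F_0} \bfont_{\max}^+$ and $\phiq$ is $\sigma^f$-semilinear, acting trivially on $F_0$ when one unravels the definitions — here one uses that $F_0 = F \cap \Qp^{\unr}$ and $q = p^f$). This gives injectivity of $F \to (\bmax{F}^+[1/t_\pi])^{\phiq=1}$. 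The map to $\bdr/\bdr^+$ is the composite of the inclusion $\bmax{F}^+[1/t_\pi] \hookrightarrow \bdr$ (recall $\bmax{F}^+ \subset \bdr^+$, and $t_\pi$ differs from Fontaine's $t$ by a unit in $\bdr^+$, so inverting $t_\pi$ lands in $\bdr$) with the projection $\bdr \to \bdr/\bdr^+$. Its kernel is $(\bmax{F}^+[1/t_\pi])^{\phiq=1} \cap \bdr^+ = (\bmax{F}^+)^{\phiq=1}$, and the standard fact that $(\bmax{F}^+)^{\phiq=1} = F$ (the $\phiq$-invariants of the ``positive crystalline'' period ring are the scalars — this is the analogue of $(\bfont_{\max}^+)^{\phi=1} = \Qp$, tensored up to $F$) gives exactness in the middle.

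The real content is surjectivity onto $\bdr/\bdr^+$. The plan is: given $x \in \bdr$, I want to produce $y \in \bmax{F}^+[1/t_\pi]$ with $\phiq(y) = y$ and $y \equiv x \pmod{\bdr^+}$. Writing $x = \sum_{i=-N}^{-1} x_i t_\pi^i$ modulo $\bdr^+$ with $x_i \in \bdr^+$ (using that $\bdr^+ \to \bdr^+/t_\pi\bdr^+ = \Cp$ is surjective and $t_\pi$ generates the maximal ideal), it suffices by devissage to treat $x = a t_\pi^{-n}$ with $a \in \bmax{F}^+$ reducing to a given element of $\Cp$, i.e. to show that the map $(\bmax{F}^+[1/t_\pi])^{\phiq=1} \to t_\pi^{-n}\bdr^+/\bdr^+$ is onto for each $n \geq 1$. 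One reduces successively to $n = 1$: if the statement holds for $n-1$, then given $x t_\pi^{-n}$ one adjusts by a $\phiq$-invariant element congruent to it modulo $t_\pi^{-(n-1)}\bdr^+$. For $n = 1$, the key is that $\phiq - 1$ (equivalently, after rescaling, $\pi^{-1}\phiq$-type operators) is surjective on $\bmax{F}^+$ onto a space large enough to hit any prescribed residue in $\Cp$; concretely one solves $\phiq(y) - y = $ (something in $t_\pi \bmax{F}^+$) by a convergent series $\sum_{k\geq 0} \phiq^k$, using that $\phiq$ is topologically nilpotent on $t_\pi \bmax{F}^+$ in the relevant Fr\'echet topology (this is where the explicit structure of $\bfont_{\max}^+$ from \S 8.5 of \cite{C02} enters, and where the summability estimates must be checked).

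The step I expect to be the main obstacle is precisely this surjectivity of $\phiq - 1$ (or the relevant twisted variant) on the positive ring $\bmax{F}^+$, with the convergence of the geometric-type series — in the cyclotomic case this is a known property of $\bfont_{\max}^+$, but one must check it transfers to the Lubin-Tate normalization $\phiq = \phi^f$ and to the coefficient extension to $F$, and that the output genuinely lies in $\bmax{F}^+[1/t_\pi]$ rather than some larger ring. A clean alternative, which I would pursue if the direct computation is unwieldy, is to deduce the whole sequence from the classical cyclotomic fundamental exact sequence by base change: since $t_\pi/t$ and $u_n$-type elements relate the two, and $(\bmax{F}^+)^{\phi=1}$ versus $(\bmax{F}^+)^{\phiq=1}$ differ only in the Frobenius power, one can restrict the classical sequence for $\bfont_{\max}^+$ to $\phiq = \phi^f$-invariants and tensor with $F$ over $\Qp$ (resp. $F_0$), checking that $\phi^f$-invariance cuts things down correctly and that $F$-linear tensoring preserves exactness — the only point needing care is that $(\bfont_{\max}^+[1/t])^{\phi^f = 1}$ might a priori be larger than $(\bfont_{\max}^+[1/t])^{\phi=1}$, but this is controlled because $t \mapsto p t$ and the eigenvalue bookkeeping forces the extra invariants to still reduce correctly modulo $\bdr^+$.
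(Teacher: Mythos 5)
The paper proves this lemma by a single citation: it is lemma~9.25 of \cite{C02}, the same reference that defines $\bmax{F}^+$. Your blind attempt instead sketches a from-scratch proof modeled on the classical fundamental exact sequence. That is the right shape of argument, but both substantive points are left as asserted facts. For middle exactness you claim $(\bmax{F}^+[1/t_\pi])^{\phiq=1}\cap\bdr^+=(\bmax{F}^+)^{\phiq=1}$ and then invoke $(\bmax{F}^+)^{\phiq=1}=F$. The first equality is the real content here, not bookkeeping: an element $y/t_\pi^n$ with $\phiq(y)=\pi^n y$ and $y\in t_\pi^n\bdr^+$ has a priori no reason to lie in $\bmax{F}^+$ --- divisibility by $t_\pi$ inside $\bdr^+$ does not automatically descend to $\bmax{F}^+$. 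What must be proved is directly that the intersection equals $F$, and this is exactly where the hard structural input on $\phiq$-eigenspaces of $\bmax{F}^+$ (Colmez's Banach--Colmez space analysis) enters. Your surjectivity sketch correctly flags the convergence of $\sum_k\phiq^k$ on $t_\pi\bmax{F}^+$ as the obstacle but does not resolve it.

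Your ``base change from the cyclotomic sequence'' alternative has a flaw you did not flag. Applying $F\otimes_{F_0}(-)$ to an exact sequence of $F_0$-vector spaces ending in $\bdr/\bdr^+$ produces $F\otimes_{F_0}(\bdr/\bdr^+)$ on the right, not $\bdr/\bdr^+$: since $\bdr/\bdr^+$ is already an $F$-vector space, $F\otimes_{F_0}(\bdr/\bdr^+)$ decomposes into $[F:F_0]$ copies, so the tensored sequence is not the one you want. Moreover the $\phi^f=1$ version of the cyclotomic sequence (with $t$ replaced by $t_\pi$) is itself a new statement, not a formal ``restriction'' of the $\phi=1$ one, and requires its own proof. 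These are the reasons the paper simply refers to lemma~9.25 of \cite{C02}, where the Lubin--Tate exact sequence is established directly.
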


\begin{proof}
This is lemma~9.25 of~\cite{C02}.
\end{proof}

If $V$ is a de Rham $F$-linear representation of $G_K$, we can $\otimes_F$ the above sequence with $V$ and we get a connecting homomorphism $\exp_{K,V} : (\bdr \otimes_F V)^{G_K} \to \cH^1(K,V)$. Recall that if $W$ is an $F$-vector space, there is a natural injective map $W \otimes_F V \to W \otimes_{\Qp} V$.

\begin{lemm}
\label{expcompat}
If $V$ is $F$-analytic, the map $\exp_{K,V} : (\bdr \otimes_F V)^{G_K} \to \cH^1(K,V)$ defined above coincides with Bloch-Kato's exponential via the inclusion $(\bdr \otimes_F V)^{G_K} \subset (\bdr \otimes_{\Qp} V)^{G_K}$, and its image is in $\Han^1(K,V)$.
\end{lemm}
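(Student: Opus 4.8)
The plan is to compare the two constructions of the exponential map term by term. Bloch--Kato's exponential is the connecting map attached to the fundamental exact sequence $0 \to \Qp \to (\bcris{}{}^+[1/t])^{\phi=1} \oplus \bdr^+ \to \bdr \to 0$ (or its variant $0 \to \Qp \to (\bcris{}{}^+[1/t])^{\phi=1} \to \bdr/\bdr^+ \to 0$), tensored over $\Qp$ with $V$. Our $\exp_{K,V}$ is the connecting map attached to the sequence of lemma \ref{fundfan}, namely $0 \to F \to (\bmax{F}^+[1/t_\pi])^{\phiq=1} \to \bdr/\bdr^+ \to 0$, tensored over $F$ with $V$. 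So first I would exhibit a morphism of short exact sequences from the $F$-sequence (tensored with $V$ over $F$) to the $\Qp$-sequence (tensored with $V$ over $\Qp$): on the middle and right terms this is induced by the inclusions $F \subset \Qp$-analogues, i.e. $(\bmax{F}^+[1/t_\pi])^{\phiq=1} \subset (\bcris{}{}^+[1/t])^{\phi=1}$ (using $q = p^f$, $t_\pi$ versus $t$, and the fact that $\bmax{F}^+ \subset \bdr^+$ with $t_\pi$ a unit times $t^{?}$ — more precisely one checks that the $F$-sequence is a direct summand of the $\Qp$-sequence after tensoring suitably, coming from the decomposition of $\Qp$-representations into isotypic components for the $\Gal(F/\Qp)$-action on coefficients). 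Functoriality of the connecting homomorphism then gives the claimed compatibility on $(\bdr \otimes_F V)^{G_K} \subset (\bdr \otimes_{\Qp} V)^{G_K}$.

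The key point making this work is $F$-analyticity of $V$: it guarantees that the ``component at the identity embedding'' is the only one that contributes, so that the naive $F$-linear fundamental sequence really does compute the restriction of the full $\Qp$-linear Bloch--Kato sequence to the subspace $\bdr \otimes_F V$. Concretely, after extending scalars one has $\bdr \otimes_{\Qp} V = \bigoplus_{\tau : F \to \Qpbar} \bdr \otimes_F^\tau V$, and for $\tau \neq \Id$ the representation $\Cp \otimes_F^\tau V$ is trivial, which (together with $\bdr^{H_K}$-type computations, cf. the Kisin--Ren setup recalled in \S\ref{kremind}) forces the $\tau$-components of the period rings to contribute only the "unramified/trivial" part; the interesting extension class lives entirely in the $\tau = \Id$ summand, which is exactly the sequence of lemma \ref{fundfan} tensored with $V$ over $F$.

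Next I would verify that the image lands in $\Han^1(K,V)$. By definition $\Han^1(K,V) \subset \cH^1(K,V)$ classifies the $F$-analytic extensions of $F$ by $V$. The extension of $G_K$-representations attached to $\exp_{K,V}(x)$ for $x \in (\bdr \otimes_F V)^{G_K}$ is built by pulling back the fundamental sequence of lemma \ref{fundfan} along $x$; concretely it is an extension $0 \to V \to V' \to F \to 0$ where $V'$ sits inside $(\bmax{F}^+[1/t_\pi])^{\phiq=1} \otimes_F V$-type data. Since $\bmax{F}^+$ is an $F$-analytic period ring (its associated $\Cp$-semilinear representation, and more to the point the $(\phi,\Gamma)$-module side, is $F$-analytic: $\bmax{F}^+ \subset \btrig{}{}$ and $g(t_\pi) = \chilt(g) t_\pi$ with no other embeddings intervening), the representation $V'$ is again $F$-analytic, being an extension of $F$-analytic representations built out of $F$-analytic data. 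Hence $\exp_{K,V}(x) \in \Han^1(K,V)$.

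The main obstacle I anticipate is the bookkeeping in the first step: pinning down precisely the map of short exact sequences and checking it is compatible with the respective $t$-adic and $t_\pi$-adic normalizations of $\bdr/\bdr^+$, and making rigorous the claim that the $F$-linear sequence is the $\tau=\Id$ direct summand of the $\Qp$-linear one. This is essentially the content of, and follows the same pattern as, the analogous cyclotomic comparison, but one must be careful that $t_\pi$ and $t$ differ (they generate the same line in $\bdr^+$ up to a unit in $(\bdr^+)^\times$ only after the appropriate identification), so that the two quotients $\bdr/\bdr^+$ are canonically the same but the maps into them from the $\phi^?=1$ pieces must be matched up correctly. Once that diagram is in place, functoriality of connecting homomorphisms and the $F$-analyticity remarks above finish the proof with no further difficulty.
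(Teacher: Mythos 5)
Your overall strategy for the first claim is essentially the paper's: both reduce to showing that a lift coming from the $F$-linear fundamental sequence of lemma~\ref{fundfan} serves equally well as a lift in the $\Qp$-linear Bloch--Kato sequence. However, the inclusion you write down,
\[ (\bmax{F}^+[1/t_\pi])^{\phiq=1} \subset (\mathbf{B}_{\mathrm{cris}}^+[1/t])^{\phi_p=1}, \]
is false: $\phiq = \phi_p^f$, so a $\phiq$-fixed element need not be $\phi_p$-fixed. The correct statement, which the paper invokes via lemma~1.1.11 of \cite{LB8}, is
\[ \bmax{\Qp}^{\phiq=1} = F_0 \otimes_{\Qp} \bmax{\Qp}^{\phi_p=1}, \]
so that $(\bmax{F}^+[1/t_\pi])^{\phiq=1} \subset F \otimes_{\Qp} \bmax{\Qp}^{\phi_p=1}$, and only after tensoring with $V$ over $F$ does one land in $\bmax{\Qp}^{\phi_p=1} \otimes_{\Qp} V$. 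You gesture at this with ``one checks that the $F$-sequence is a direct summand of the $\Qp$-sequence after tensoring suitably'', but the displayed inclusion is a genuine error, not just vagueness. Once fixed, the paper also avoids the diagram of short exact sequences entirely: it simply writes down the cocycle $g \mapsto (g-1)\tilde{x}$ for a single lift $\tilde{x}$ and observes this represents both $\exp_{K,V}(x)$ and the classical $\exp(x)$. This is the cleaner route and is worth adopting.

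For the second claim, your argument has a real gap. You say the extension $V'$ is ``again $F$-analytic, being an extension of $F$-analytic representations built out of $F$-analytic data.'' But an extension of $F$-analytic representations by $F$-analytic representations need \emph{not} be $F$-analytic --- that is exactly why $\Han^1(K,V)$ is a proper subspace of $\cH^1(K,V)$ when $F \neq \Qp$, and the whole point of the lemma is to single out which extensions are $F$-analytic. Appealing to $\bmax{F}^+$ being an ``$F$-analytic period ring'' would require a precise statement and proof about how the non-identity Sen operators act on $\bmax{F}^+[1/t_\pi]$, which you do not supply. The paper's argument sidesteps this: $\exp_{K,V}(x)$ is by construction a de~Rham extension of $F$ by $V$, hence Hodge--Tate; at each embedding $\tau \neq \Id$, both $V$ and $F$ have all Hodge--Tate weights $0$, so $\Cp \otimes_F^\tau V'$ is a Hodge--Tate $\Cp$-representation with all weights $0$, hence trivial. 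That is what $F$-analyticity of $V'$ means. You should replace your vague appeal with this de~Rham/Hodge--Tate argument.
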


\begin{proof}
Bloch and Kato's exponential is defined as follows (definition 3.10 of \cite{BK90}): if $\phi_p$ denotes the Frobenius map that lifts $x \mapsto x^p$ and if $x \in (\bdr \otimes_{\Qp} V)^{G_K}$, there exists $\tilde{x} \in \bmax{\Qp}^{\phi_p=1} \otimes_{\Qp} V$ such that $\tilde{x} - x \in \bdr^+ \otimes_{\Qp} V$, and $\exp(x)$ is represented by the cocyle $g \mapsto (g-1)\tilde{x}$.

Lemma \ref{fundfan} says that we can lift $x \in (\bdr \otimes_F V)^{G_K}$ to some $\tilde{x} \in (\bmax{F}^+[1/t_\pi])^{\phiq=1} \otimes_F V$ such that $\tilde{x}-x \in \bdr^+ \otimes_F V \subset \bdr^+ \otimes_{\Qp} V$. In addition, $\bmax{\Qp}^{\phiq=1} = F_0 \otimes_{\Qp} \bmax{\Qp}^{\phi_p=1}$ 
(see lemma 1.1.11 of \cite{LB8})
so that $(\bmax{F}^+[1/t_\pi])^{\phiq=1} \subset F \otimes_{\Qp} \bmax{\Qp}^{\phi_p=1}$. We can therefore view $\tilde{x}$ as an element of $\bmax{\Qp}^{\phi_p=1} \otimes_{\Qp} V$, and $\exp_{K,V}(x)= [g \mapsto (g-1)\tilde{x}] = \exp(x)$.

The construction of $\exp_{K,V}(x)$ shows that the cocycle $\exp_{K,V}(x)$ is de Rham. At each embedding $\tau \neq \Id$ of $F$, the extension of $F$ by $V$ given by $\exp_{K,V}(x)$ is therefore Hodge-Tate with weights $0$. This finishes the proof of the lemma.
\end{proof}

Recall the following theorem of Kato (see \S II.1 of \cite{K93}).

\begin{theo}
\label{katofor}
If $V$ is a de Rham representation, the map from $(\bdr \otimes_{\Qp} V)^{G_K}$ to
$\cH^1(K,\bdr \otimes_{\Qp} V)$ defined by 
$x \mapsto \left[ g \mapsto \log(\chi_{\cyc}(\overline{g}))x \right]$  
is an isomorphism, and the dual
exponential map $\exp^*_{K,V^*(1)} : \cH^1(K,V) \to (\bdr \otimes_{\Qp} V)^{G_K}$
is equal to the composition of the map $\cH^1(K,V) \to
\cH^1(K,\bdr \otimes_{\Qp} V)$ with the inverse of this isomorphism.
\end{theo}

Concretely, if $c \in \cZ^1(K,\bdr \otimes_{\Qp} V)$ is some cocycle, there exists $w \in \bdr \otimes_{\Qp} V$ such that $c(g) = \log(\chi_{\cyc}(\overline{g})) \cdot \exp^*_{K,V^*(1)}(c) + (g-1)(w)$.

\begin{coro}
\label{fanduexp}
If $c \in \cZ^1(K,\bdr \otimes_F V)$, and if there exist $x \in (\bdr \otimes_F V)^{G_K}$ and $w \in \bdr \otimes_F V$ such that $c(g) = \ell(\overline{g}) \cdot x + (g-1)(w)$, then $\exp^*_{K,V^*(1)}(c) =x$.
\end{coro}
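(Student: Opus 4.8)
The plan is to deduce this formally from Theorem~\ref{katofor}, the only real input being a comparison between $\chilt$ and $\chicyc$ at the level of $\bdr$. First I would invoke the explicit description following Theorem~\ref{katofor}: applied to the cocycle $c$ (pushed forward along $\bdr\otimes_F V\hookrightarrow\bdr\otimes_{\Qp}V$), it says that $\exp^*_{K,V^*(1)}(c)$ is the unique element of $(\bdr\otimes_{\Qp}V)^{G_K}$ for which there exists $w'\in\bdr\otimes_{\Qp}V$ with $c(g)=\log(\chicyc(\bar g))\cdot\exp^*_{K,V^*(1)}(c)+(g-1)(w')$. Comparing this with the hypothesis $c(g)=\ell(\bar g)\cdot x+(g-1)(w)$ and using that $x$ is $G_K$-fixed (so $(g-1)(\mu x)=((g-1)\mu)\cdot x$ for $\mu\in\bdr$), the whole statement reduces to showing that the map $g\mapsto \ell(\bar g)-\log(\chicyc(\bar g))=\log_p(\chilt(g)/\chicyc(g))$, which takes values in $F\subset\bdr$, is a coboundary in $\bdr$; granting this, $c(g)=\log(\chicyc(\bar g))\cdot x+(g-1)(w+\mu x)$ and uniqueness in Theorem~\ref{katofor} forces $\exp^*_{K,V^*(1)}(c)=x$.

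So the core of the argument is the claim that $g\mapsto\log_p(\chilt(g)\chicyc(g)^{-1})$ is a coboundary in $\bdr$. Since $\cH^1(G_K,\Cp(i))=0$ for $i\neq 0$, the inclusion $\bdr^+\subset\bdr$ and the reduction $\bdr^+\to\Cp$ induce isomorphisms $\cH^1(G_K,\bdr)\cong\cH^1(G_K,\bdr^+)\cong\cH^1(G_K,\Cp)$, and as our cocycle is $F$-valued it suffices to prove it is a coboundary in $\Cp$. For this I would use that $\LT$ has dimension $1$: by Tate's Hodge--Tate decomposition of $p$-divisible groups, the weight-$1$ part of $\Cp\otimes_{\Qp}V_{\LT}$, where $V_{\LT}=\Qp\otimes_{\Zp}T_p(\LT)$ (equal to $F(\chilt)$ as an $F$-linear representation), is $\Cp(1)\otimes_{\OO_F}\Lie(\LT)$; since $G_F$ acts trivially on $\Lie(\LT)$, which is free of rank one over $\OO_F$, and $\OO_F$ acts on $\Cp(1)$ through the identity embedding, this is just $\Cp(1)=\Cp(\chicyc)$, and (by $F$-analyticity of $F(\chilt)$, the non-identity components having weight $0$) it is the identity component $\Cp\otimes_F^{\Id}V_{\LT}=\Cp(\chilt)$. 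Hence there is $\mu\in\Cp^\times$ with $\chilt(g)\chicyc(g)^{-1}=g(\mu)/\mu$ for all $g$, and since $\log_p\colon\Cp^\times\to\Cp$ is $G_K$-equivariant, $\log_p(\chilt(g)\chicyc(g)^{-1})=(g-1)(\log_p\mu)$.

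The main obstacle is exactly this last point: pinning down the class of $g\mapsto\log_p(\chilt(g)\chicyc(g)^{-1})$ in $\cH^1(G_K,\bdr)$; it is the Hodge--Tate--Sen weight of $\chilt\chicyc^{-1}$, which vanishes precisely because $\LT$ is one-dimensional, i.e. because the Hodge--Tate weight of $\chilt$ is concentrated at the identity embedding with the same value as that of $\chicyc$. (In the cyclotomic case $\chilt=\chicyc$ and there is nothing to prove, which is why Theorem~\ref{katofor} is stated with $\log\chicyc$; the content of the corollary is that one may replace $\log\chicyc$ by $\ell$ in the Lubin--Tate setting.) Everything after that reduction is routine manipulation of cocycles and of Theorem~\ref{katofor}.
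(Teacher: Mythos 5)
Your proof is correct, but it establishes the key admissibility fact by a genuinely different route than the paper. After the same formal reduction to showing that the $F$-valued cocycle $g\mapsto\log_p\bigl(\chilt(g)/\chicyc(g)\bigr)$ is a coboundary in $\bdr$, the paper's proof is a one-liner: the two periods $t_\pi$ and $t$ both generate the maximal ideal $\ker\theta$ of $\bdr^+$, so $t_\pi/t\in(\bdr^+)^\times$; taking $\log$ gives an element $\log(t_\pi/t)\in\bdr^+$ with $(g-1)\log(t_\pi/t)=\log\bigl(\chilt(g)/\chicyc(g)\bigr)$. Your argument instead invokes Tate's Hodge--Tate decomposition of the $p$-divisible group $\LT$ (to identify $\Cp(\chilt)\cong\Cp(1)$ and, in the same stroke, to see the $F$-analyticity of $F(\chilt)$), producing a period $\mu_0\in\Cp^\times$ with $g(\mu_0)/\mu_0=\chilt(g)/\chicyc(g)$; you then must lift the resulting coboundary from $\Cp$ to $\bdr$ via the d\'evissage isomorphisms $\cH^1(G_K,\bdr)\cong\cH^1(G_K,\bdr^+)\cong\cH^1(G_K,\Cp)$. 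This is more conceptual --- it explains \emph{why} a period exists, via the dimension of $\LT$ --- but it brings in heavier input (Tate's theorem on $p$-divisible groups, the vanishing $\cH^1(G_K,\Cp(i))=0$ for $i\neq 0$, and passing to limits in $\bdr^+$) where the paper only needs the basic structure of $\bdr^+$ and the fact that $t_\pi$ is a uniformizer. Note in passing that $\theta(t_\pi/t)$ is exactly your $\mu_0$ and $\theta(\log(t_\pi/t))=\log_p\mu_0$, so the paper's element is precisely the lift whose existence your cohomological argument establishes indirectly.
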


\begin{proof}
This follows from theorem \ref{katofor} and from the fact that $g \mapsto \log(\chilt(\overline{g})/\chi_{\cyc}(\overline{g}))$ is $\bdr$-admissible, since $t_\pi/t \in (\bdr^+)^\times$ so that $\log(t_\pi/t) \in \bdr^+$ is well-defined.
\end{proof}

\subsection{Interpolating exponentials and their duals}
\label{interexp}

Let $V$ be an $F$-analytic crystalline representation. By theorem \ref{psiplus}, we have $\drig{}(V)^{\psiq=1} \subset \brigplus{,F}[1/t_\pi] \otimes_F \dcris(V)$. Let $\partial_V$ denote the map $\partial_D$ of \S \ref{nabdiv} for $D=\dcris(V)$.

\begin{theo}
\label{interdual}
If $y \in \drig{}(V)^{\psiq=1}$, then
\[ \exp^*_{F_n, V^*(1)}(h^1_{F_n,V}(y)) = 
\begin{cases}
q^{-n} \partial_V(\phiq^{-n}(y)) & \text{if $n \geq 1$} \\
(1-q^{-1}\phiq^{-1})\partial_V(y) &  \text{if $n = 0$.}
\end{cases} \]
\end{theo}

\begin{proof}
Since the diagram 
\[ \begin{CD} 
\cH^1(F_{n+1},V) @>{\exp^*_{F_{n+1},V^*(1)}}>> F_{n+1} \otimes_F
\dcris(V) \\ 
@V{\cor_{F_{n+1}/F_n}}VV @V{\Tr_{F_{n+1}/F_n}}VV \\
\cH^1(F_n,V) @>{\exp^*_{F_n,V^*(1)}}>> F_n \otimes_F \dcris(V)
\end{CD} \]
is commutative, we only need to prove the theorem when $n \geq n(F)$ 
by lemma \ref{trpsi} and proposition \ref{h1corc}. By theorem \ref{hunkv}, we have
\[ h^1_{F_n,V}(y)(b_j^k) = \ell^*(b) \cdot \frac{b_j^k-1}{b_j-1} \cdot \frac{\nabla^{d-1}}{\prod_{i \neq j} (b_i-1)} (y) - (b_j^k-1)z, \]
with $z \in \btrig{}{} \otimes_F V$ so that if $m \gg 0$, then $\phiq^{-m}(z) \in \bdr^+ \otimes_F V$ (see \S 3 of \cite{PGMLAV} and \S 2.2 of \cite{LB2}). Moreover, $\phiq^{-m}(y) \in F_m \dpar{t_\pi} \otimes_F \dcris(V)$. Let $W = \{ w \in F_m \dpar{t_\pi} \otimes_F \dcris(V)$ such that $\partial_V(w) = 0\}$. The operator $\nabla$ is bijective on $W$, and $F_m \dpar{t_\pi} \otimes_F \dcris(V)$ injects into $\bdr \otimes_F V$, hence there exists $u \in \bdr \otimes_F V$ such that
\begin{align*} h^1_{F_n,V}(y)(b_j^k) & =\ell^*(b) \cdot \frac{b_j^k-1}{b_j-1} \cdot \frac{\nabla^{d-1}}{\prod_{i \neq j} (b_i-1)} (\partial_V(\phiq^{-m}(y))) - (b_j^k-1)u \\
& = \ell(b_j^k) \cdot  \Theta_b (\partial_V(\phiq^{-m}(y))) - (b_j^k-1)u \\
& = \ell(b_j^k) \cdot q^{-n} \partial_V (\phiq^{-n}(y))) - (b_j^k-1)u,
\end{align*} 
by lemmas \ref{thetr} and \ref{trpsi}. This proves the theorem by corollary \ref{fanduexp}.
\end{proof}

We now give explicit formulas for $\exp_{F_n,V}$. Take $h \geq 0$ such that $\Fil^{-h} \dcris(V) = \dcris(V)$, so that  $t_\pi^h (\brigplus{,F} \otimes_F \dcris(V)) \subset \drig{}(V)$ (in the notation of \S 2.2 of \cite{KR09}, we have $t_\pi^h (\brigplus{,F} \otimes_F \dcris(V)) \subset \mathcal{M}(\dcris(V))$). In particular, if $y \in (\brigplus{,F} \otimes_F \dcris(V))^{\psiq=1}$, then $\nabla_{h-1} \circ  \cdots \circ \nabla_0 (y) \in \drig{}(V)^{\psiq=1}$.

\begin{theo}\label{expbk}
If $y \in (\brigplus{,F} \otimes_F \dcris(V))^{\psiq=1}$, then
\begin{multline*}
h^1_{F_n,V}
(\nabla_{h-1} \circ  \cdots \circ \nabla_0 (y)) = 
(-1)^{h-1} (h-1)!
\begin{cases}
\exp_{F_n,V}(q^{-n} \partial_V(\phiq^{-n}(y))) & \text{if $n \geq 1$} \\
\exp_{F,V}((1-q^{-1}\phiq^{-1})\partial_V(y)) & \text{if $n=0$.}
\end{cases} 
\end{multline*}
\end{theo}

\begin{proof}
Since the diagram 
\[ \begin{CD} 
F_{n+1} \otimes_F \dcris(V) @>{\exp_{F_{n+1},V}}>> \cH^1(F_{n+1},V) \\
@V{\Tr_{F_{n+1}/F_n}}VV @V{\cor_{F_{n+1}/F_n}}VV \\
F_n \otimes_F \dcris(V) @>{\exp_{F_n,V}}>> \cH^1(F_n,V)
\end{CD} \]
is commutative, we only need to prove the theorem when $n \geq n(F)$ by lemma \ref{trpsi} and proposition \ref{h1corc}. By theorem \ref{hunkv}, we have
\begin{multline*} 
h^1_{F_n,V}(\nabla_{h-1} \circ  \cdots \circ \nabla_0 (y))(b_j^k) \\ 
= \ell^*(b) \cdot \frac{b_j^k-1}{b_j-1} \cdot \frac{\nabla^{d-1}}{\prod_{i \neq j} (b_i-1)} (\nabla_{h-1} \circ  \cdots \circ \nabla_0 (y)) - (b_j^k-1)z \\ 
= (b_j^k-1) \cdot  (\nabla_{h-1} \circ  \cdots \circ \nabla_1 \circ \Theta_b)(y) - (b_j^k-1)z,
\end{multline*}
so that $h^1_{F_n,V}(\nabla_{h-1} \circ  \cdots \circ \nabla_0 (y))(g) = (g-1) (\nabla_{h-1} \circ  \cdots \circ \nabla_1 \circ \Theta_b)(y) - (g-1)z$ if $g \in \Gamma_K$. By lemma \ref{thetbpsi}, we have 
\begin{multline*}
(\nabla_{h-1} \circ  \cdots \circ \nabla_1 \circ \Theta_b)((\phiq-1)y)  \in (t_\pi/\phiq^n(T))^h (\brigplus{,F} \otimes_F \dcris(V))^{\psiq=0} \subset  \drig{}(V)^{\psiq=0},
\end{multline*} 
so that (in the notation of theorem \ref{hunkv}) $m_c = (\nabla_{h-1} \circ  \cdots \circ \nabla_1 \circ \Theta_b)((\phiq-1)y)$. Since $(\phiq-1)z=m_c$, we have $(\phiq-1)((\nabla_{h-1} \circ  \cdots \circ \nabla_1 \circ \Theta_b)(y)-z) = 0$, and therefore
\[ (\nabla_{h-1} \circ  \cdots \circ \nabla_1 \circ \Theta_b)(y)-z \in (\btrig{}{}[1/t_\pi])^{\phiq=1} \otimes_F V \] 
The ring $\btrig{}{}$ contains $\bmax{F}^+$ and the inclusion $(\bmax{F}^+[1/t_\pi])^{\phiq=1} \subset (\btrig{}{}[1/t_\pi])^{\phiq=1}$ is an equality (proposition 3.2 of \cite{LB2}). This implies that
\[  (\nabla_{h-1} \circ  \cdots \circ \nabla_1 \circ \Theta_b)(y)-z \subset (\bmax{F}^+[1/t_\pi])^{\phiq=1} \otimes_F V. 
\]
Moreover, we have $z \in \btrig{}{} \otimes_F V$ so that if $m \gg 0$, then $\phiq^{-m}(z) \in \bdr^+ \otimes_F V$. In addition, $\phiq^{-m}(y)$ belongs to $F_m \dcroc{t_\pi} \otimes_F \dcris(V)$, so that $\phiq^{-m}(y)-\partial_V (\phiq^{-m}(y))$ belongs to $t_\pi F_m \dcroc{t_\pi} \otimes_F \dcris(V)$ and therefore
\begin{align*} 
(\nabla_{h-1} \circ  \cdots \circ \nabla_1 \circ \Theta_b)\left(\phiq^{-m}(y)-\partial_V (\phiq^{-m}(y))\right) & \in t_\pi^h F_m \dcroc{t_\pi} \otimes_F \dcris(V) \\
& \subset \bdr^+ \otimes_F V. 
\end{align*}
We can hence write 
\[ h^1_{F_n,V}(\nabla_{h-1} \circ  \cdots \circ \nabla_0 (y))(g) = (g-1) ( \nabla_{h-1} \circ  \cdots \circ \nabla_1 \circ \Theta_b \circ\partial_V (\phiq^{-m}(y)) - (g-1)u, \]
with $u \in \bdr^+ \otimes_F V$. The theorem now follows from the fact that 
\[ \Theta_b \circ\partial_V (\phiq^{-m}(y)) = q^{-n} \partial_V (\phiq^{-n}(y)) \in F_n \otimes_F \dcris(V) \] 
by lemmas \ref{thetbpsi} and \ref{trpsi}, that $\nabla_{h-1} \circ  \cdots \circ \nabla_1 = (-1)^{h-1}(h-1)!$ on $F_n \otimes_F \dcris(V)$, and from the reminders given in \S \ref{fanexp}, in particular the fact that $\exp_{K,V}$ is the connecting homomorphism when tensoring the exact sequence of lemma \ref{fundfan} with $V$ and taking Galois invariants.
\end{proof}

\subsection{Kummer theory and the representation $F(\chilt)$}
\label{expkum}

Throughout this section, $V=F(\chilt)$. Let $L \subset \Qpbar$ be an extension of $K$. The Kummer map $\delta : \LT(\MM_L) \to \cH^1(L,V)$ is defined as follows. Choose a generator $u=(u_k)_{k \geq 0}$ of $T_\pi \LT = \projlim_k \LT[\pi^k]$. If $x \in \LT(\MM_L)$, let $x_k \in \LT(\MM_{\Qpbar})$ be such that $[\pi^k](x_k)=x$. If $g \in G_L$, then $g(x_k) - x_k \in \LT[\pi^k]$ so that we can write $g(x_k)-x_k = [c_k(g)](u_k)$ for some $c_k(g) \in \OO_F / \pi^k$. If $c(g)=(c_k(g))_{k \geq 0} \in \OO_F$ then $\delta(x) = [g \mapsto c(g)] \in \cH^1(L,V)$. 

If $x \in \LT(\MM_L)$, let $\TT_{L/K}$ be defined by $\TT_{L/K}(x) = \sum_{g \in \Gal(L/K)}^{\LT} g(x)$ where the superscript $\LT$ means that the summation is carried out using the Lubin-Tate addition. If $F=\Qp$ and $\LT=\Gm$, we recover the classical Kummer map, and $\TT_{L/K}(x) = \Nm_{L/K}(1+x)-1$.

\begin{lemm}\label{kumcor}
We have the following commutative diagram:
\[ \begin{CD} \LT(\MM_{K_{n+1}}) @>{\delta}>> \cH^1(K_{n+1},V) \\
@V{\TT_{K_{n+1}/K_n}}VV @VV{\cor_{K_{n+1}/K_n}}V \\
\LT(\MM_{K_n}) @>{\delta}>> \cH^1(K_n,V). \end{CD} \]
\end{lemm}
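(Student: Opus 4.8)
The plan is to verify the commutativity of the Kummer-theoretic diagram directly on cocycles, using the explicit descriptions of both $\delta$ and $\cor_{K_{n+1}/K_n}$ that are already available in the excerpt. First I would fix $x \in \LT(\MM_{K_{n+1}})$ and a compatible system $(x_k)_{k \geq 0}$ with $[\pi^k](x_k) = x$, giving the cocycle $c(g) = (c_k(g))_{k \geq 0}$ on $G_{K_{n+1}}$ representing $\delta(x)$, where $g(x_k) - x_k = [c_k(g)](u_k)$. Then I would compute $\cor_{K_{n+1}/K_n}(\delta(x))$ using description (1) or (2) of Definition~\ref{defcor}: pick a set $X \subset G_{K_n}$ of representatives of $G_{K_n}/G_{K_{n+1}}$ (equivalently of $\Gal(K_{n+1}/K_n)$), and the corestricted cocycle is $g \mapsto \sum_{\sigma \in X}^{\LT} \sigma \cdot c(\sigma_g^{-1} g \sigma)$ in the appropriate sense, or — more conveniently — use description (2): since $c(g) = (g-1)$ applied to the system $(x_k)$ inside $\LT(\MM_{\Qpbar})$ (written additively via $[c_k(g)](u_k) = g(x_k) - x_k$), we get $\cor(c)(g) = (g-1)\bigl(\sum_{\sigma \in X}^{\LT} \sigma(x_k)\bigr)_k$.

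The key step is then to recognize that $\sum_{\sigma \in X}^{\LT} \sigma(x_k)$ is a compatible system of $\pi^k$-division points of $\TT_{K_{n+1}/K_n}(x)$. Indeed, applying $[\pi^k]$ commutes with the Lubin-Tate summation (it is a homomorphism of the formal group), and with the Galois action, so $[\pi^k]\bigl(\sum_{\sigma}^{\LT} \sigma(x_k)\bigr) = \sum_{\sigma}^{\LT} \sigma([\pi^k](x_k)) = \sum_{\sigma}^{\LT} \sigma(x) = \TT_{K_{n+1}/K_n}(x)$, using that $x \in \LT(\MM_{K_{n+1}})$ so its conjugates over $K_n$ are exactly the $\sigma(x)$ for $\sigma$ running over (representatives of) $\Gal(K_{n+1}/K_n)$. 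This shows the system $\bigl(\sum_\sigma^{\LT} \sigma(x_k)\bigr)_k$ is a legitimate choice of division points for $y := \TT_{K_{n+1}/K_n}(x) \in \LT(\MM_{K_n})$, and hence the cocycle $g \mapsto (g-1)\bigl(\sum_\sigma^{\LT}\sigma(x_k)\bigr)_k$ is precisely $\delta(y)$ by definition of the Kummer map. Comparing the two computations gives $\cor_{K_{n+1}/K_n}(\delta(x)) = \delta(\TT_{K_{n+1}/K_n}(x))$, which is the claim.

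The main obstacle — really the only subtle point — is bookkeeping the interaction between the set-theoretic coset representatives $X$ for $G_{K_n}/G_{K_{n+1}}$ and the action on division points: one must check that the twisted-permutation terms in description (3) of Definition~\ref{defcor} collapse correctly, or equivalently that the choice of lifts $x_k$ (which are only well-defined up to $\LT[\pi^k]$, and whose conjugates $\sigma(x_k)$ depend on the representative $\sigma$) does not affect the final class. This is handled by the standard observation that changing the $x_k$ or the representatives alters $\sum_\sigma^{\LT}\sigma(x_k)$ by an element of $\LT[\pi^k]$ in a coherent way, which only changes the cocycle by a coboundary — and indeed $\delta(y)$ is independent of all such choices. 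A clean way to organize this is to work with description (2) of the corestriction throughout: $c(g) = (g-1)n$ with $n = (x_k)_k$ viewed in the module $\projlim_k \LT[\pi^k] \otimes \Q$ or simply in $\LT(\MM_{\Qpbar})$ pointwise, so that $\cor(c)(g) = (g-1)(\sum_{\sigma \in X} \sigma n)$, and the ambiguity disappears. Once this is set up, the verification is the short computation above.
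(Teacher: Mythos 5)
Your proof is correct and takes essentially the same route as the paper, which simply declares the lemma a ``straightforward consequence of the explicit description of the corestriction map''; you have carried out exactly that verification, using description (2) of Definition~\ref{defcor} applied to the compatible system $(x_k)_k$ of division points.
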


\begin{proof}
This is a straightforward consequence of the explicit description of the corestriction map.
\end{proof}

Recall that $\phiq \circ \psiq(f) = \frac{1}{q} \sum_{\omega \in \LT[\pi]} f(T \oplus \omega)$, so that for $n \geq 1$:
\[ \psiq(f)(u_n) = \frac{1}{q} \sum_{\omega \in \LT[\pi]} f(u_{n+1} \oplus \omega) =  \frac{1}{q} \Tr_{F_{n+1}/F_n} f(u_{n+1}). \]
In particular, if $f(T) \in \brigplus{,F}$ is such that $\psiq (f(T)) = 1/\pi \cdot f(T)$ and $y_n = f(u_n)$, then $\Tr_{F_{n+1}/F_n}(y_{n+1}) = q/\pi \cdot y_n$.

\begin{prop}
\label{interlog}
Assume that $F \neq \Qp$. If $\{y_n\}_{n \geq 1}$ is a sequence with $y_n \in F_n$ and $\Tr_{F_{n+1} / F_n}(y_{n+1}) = q/\pi \cdot y_n$,  there exists $f(T) \in \brigplus{,F}$ such that $\psiq (f(T)) = 1/\pi \cdot f(T)$ and $y_n = f(u_n)$ for all $n \geq 1$.
\end{prop}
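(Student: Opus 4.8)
The goal is to show that the map $f\mapsto (f(u_n))_{n\geq 1}$ sends $(\brigplus{,F})^{\psiq=1/\pi}$ onto every sequence $(y_n)$ of the stated type. The plan has two stages. First, produce \emph{some} $g_0\in\brigplus{,F}$ with $g_0(u_n)=y_n$ for all $n\geq 1$, ignoring the eigenvalue condition. Second, correct $g_0$ into a $\psiq=1/\pi$-eigenvector without disturbing its values at the $u_n$. The correction is short and formal; the construction of $g_0$ is where the real work lies.

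\emph{The correction.} Put $\delta=\pi\psiq(g_0)-g_0\in\brigplus{,F}$. Since $\phiq\circ\psiq(f)=\frac1q\sum_{z\in\LT[\pi]}f(T\oplus z)$ for $f\in\brigplus{,F}$, we get, exactly as in the computation preceding the proposition, $\pi\psiq(g_0)(u_n)=\frac\pi q\Tr_{F_{n+1}/F_n}(g_0(u_{n+1}))=\frac\pi q\cdot\frac q\pi y_n=y_n$, so $\delta(u_n)=0$ for all $n\geq 1$; as $\delta$ has coefficients in $F$, it vanishes at every point of $\LT[\pi^\infty]\setminus\{0\}$, i.e. $\delta\in (t_\pi/T)\brigplus{,F}$. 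Write $\delta=\delta(0)\cdot(t_\pi/T)+\delta'$; then $\delta'(0)=0$ and $\delta'$ still vanishes on $\LT[\pi^\infty]\setminus\{0\}$, hence on all of $\LT[\pi^\infty]$, so $\delta'=t_\pi\,\delta''$ with $\delta''\in\brigplus{,F}$. I would then record the two identities
\[ \psiq(t_\pi\cdot\eta)=\pi^{-1}t_\pi\cdot\psiq(\eta)\quad(\eta\in\brigplus{,F}),\qquad \psiq(t_\pi/T)=q^{-1}(t_\pi/T), \]
both following from the averaging formula together with $t_\pi(T\oplus z)=t_\pi(T)$ for $z\in\LT[\pi]$ (because $\log_{\LT}(z)=0$) and $\sum_{z\in\LT[\pi]}(T\oplus z)^{-1}=\pi/[\pi](T)$. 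By the first identity $(\pi\psiq-1)(t_\pi\eta)=t_\pi\,(\psiq-1)(\eta)$, and $\psiq-1$ is onto $\brigplus{,F}$ by Corollary~\ref{surjplus}; choosing $\eta$ with $(\psiq-1)(\eta)=-\delta''$ gives $(\pi\psiq-1)(t_\pi\eta)=-\delta'$. Replace $g_0$ by $g=g_0+t_\pi\eta$: since $t_\pi(u_n)=0$ we still have $g(u_n)=y_n$, while $(\pi\psiq-1)g=\delta(0)\cdot(t_\pi/T)$. Finally set
\[ f=g+\frac{\delta(0)}{1-\pi/q}\cdot\frac{t_\pi}{T}, \]
which makes sense because $F\neq\Qp$ forces $\mathrm{val}_\pi(q)=[F:\Qp]>1=\mathrm{val}_\pi(\pi)$, in particular $\pi\neq q$ and $1-\pi/q\neq 0$. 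By the second identity $(\pi\psiq-1)(t_\pi/T)=(\pi/q-1)(t_\pi/T)$, so $(\pi\psiq-1)f=\delta(0)(t_\pi/T)+\frac{\delta(0)}{1-\pi/q}(\pi/q-1)(t_\pi/T)=0$, i.e. $\psiq(f)=f/\pi$; and $f(u_n)=g(u_n)=y_n$ since $(t_\pi/T)(u_n)=0$. Note $f\in\brigplus{,F}$ throughout.

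\emph{Construction of $g_0$ (the main obstacle).} I would build $g_0$ by patching finite-level interpolations. Since $\OO_{F_n}=\OO_F[u_n]$, there is for each $n$ a polynomial $g_0^{(n)}\in F[T]$ with $g_0^{(n)}(u_m)=y_m$ for $m\leq n$, and one can arrange that $g_0^{(n+1)}-g_0^{(n)}$ is divisible by $[\pi^n](T)/T$. For every $\rho<1$ the $\rho$-Gauss norm of $[\pi^n](T)/T$ tends to $0$ (essentially geometrically) as $n\to\infty$, since its roots — the nonzero $\pi^n$-torsion points of $\LT$ — all lie in the open unit disk. Thus $\sum_n(g_0^{(n+1)}-g_0^{(n)})$ will converge in $\brigplus{,F}$ once one knows the $p$-adic size of the correction terms grows slowly enough, and this is exactly where I expect the hypothesis $F\neq\Qp$ and the precise factor $q/\pi$ to be used: iterating the relation yields $\Tr_{F_n/F}(y_n)=(q/\pi)^n y_0$ with $y_0:=(\pi/q)\Tr_{F_1/F}(y_1)\in F$, equivalently the sequence $(\pi^n y_n)$ is compatible under the \emph{normalized} trace maps $\frac1{[F_{n+1}:F_n]}\Tr_{F_{n+1}/F_n}$; combining the uniform boundedness of normalized traces with $\mathrm{val}_\pi(q/\pi)=[F:\Qp]-1>0$ should control $\mathrm{val}_\pi(y_n)$ well enough to beat the geometric decay of $\|[\pi^n](T)/T\|_\rho$. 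Making this estimate precise is the technical heart of the argument; granting it, the previous paragraph concludes the proof.
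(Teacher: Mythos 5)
Your ``correction'' stage is correct and is essentially the paper's argument in a slightly different order. The paper first uses the freedom in the interpolant (adding a multiple of $t_\pi/T$, which vanishes at every $u_n$ but not at $0$) to arrange $g(0)$ so that $\psiq(g)-\pi^{-1}g$ vanishes at $0$ as well, hence is divisible by $t_\pi$ outright, and then invokes $\psiq(t_\pi\eta)=\pi^{-1}t_\pi\psiq(\eta)$ together with Corollary~\ref{surjplus}. You instead keep $g_0$ fixed, observe $\pi\psiq(g_0)-g_0$ is divisible by $t_\pi/T$, split off the constant-term part $\delta(0)\cdot(t_\pi/T)$ and treat it with the identity $\psiq(t_\pi/T)=q^{-1}(t_\pi/T)$, which is a correct computation (your formula $\sum_{z\in\LT[\pi]}(T\oplus z)^{-1}=\pi/[\pi](T)$ does hold, e.g.\ by taking the logarithmic derivative in $X$ of $\prod_z(X-(T\oplus z))=[\pi](X)-[\pi](T)$ and setting $X=0$). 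Both versions use $q\neq\pi$, and both reduce to Corollary~\ref{surjplus} for the $t_\pi$-divisible part. So far so good.

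The genuine gap is the existence of $g_0\in\brigplus{,F}$ with $g_0(u_n)=y_n$, which you explicitly leave unfinished (``Making this estimate precise is the technical heart\dots''). The paper disposes of this in one line by citing Lazard~\cite{L62}: over a complete discretely valued field, the open unit disc satisfies a $p$-adic Mittag--Leffler/Weierstrass theorem, and one can prescribe the values of an analytic function at any discrete sequence of points $u_n$ with $|u_n|\to 1$ with \emph{no} growth restriction on the target values $y_n$. (Concretely: at each stage one corrects the interpolant by $c\,T^k\prod_{m\leq n}(T-\text{conjugates of }u_m)$, and by taking $k$ large the correction's Gauss norm on any fixed sub-disc $|T|\leq\rho<|u_{n+1}|$ can be made as small as desired, so the patching always converges.) This means your speculation in the last paragraph is a misconception: neither the trace compatibility $\Tr_{F_{n+1}/F_n}(y_{n+1})=(q/\pi)y_n$ nor the hypothesis $F\neq\Qp$ is needed for the interpolation step. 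That hypothesis enters only through $q\neq\pi$ in the correction stage (exactly where you and the paper both use it). In short: your second stage is fine; for the first, replace the heuristic patching argument by a citation of Lazard's theorem, and delete the suggestion that the trace relation controls the construction of $g_0$.
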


\begin{proof}
By \cite{L62}, there exists a power series $g(T) \in \brigplus{,F}$ such that 
$g(u_n) = y_n$ for all $n \geq 1$. We also have
\[ \psiq g (0) = \frac{1}{q} g(0) + \frac{1}{q} \Tr_{F_1/F_0} g(u_1), \]
and since $q \neq \pi$ (because $F \neq \Qp$),  we can choose $g(0)$ such that
\[ \frac{1}{\pi} g (0) = \frac{1}{q} g(0) + \frac{1}{q} \Tr_{F_1/F_0} y_1. \]
This implies that $(\psiq(g)-1/\pi \cdot g)(u_n)=0$ for all $n \geq 0$, 
so that $\psiq(g)-1/\pi \cdot g \in t_\pi \cdot \brigplus{,F}$. It is therefore 
enough to prove that $\psiq-1/\pi  : t_\pi \cdot \brigplus{,F} \to t_\pi \cdot \brigplus{,F}$ 
is onto. Since $\psiq(t_\pi f) = 1/\pi \cdot t_\pi \psiq(f)$, this amounts to 
proving that $\psiq-1 : \brigplus{,F} \to \brigplus{,F}$ is onto, which follows from corollary \ref{surjplus}.
\end{proof}

\begin{defi}
\label{defs}
Let $S$ denote the set of sequences $\{x_n\}_{n \geq 1}$ with $x_n \in \MM_{F_n}$ and $\Tr^{\LT}_{F_{n+1}/F_n}(x_{n+1}) = [q/\pi](x_n)$ for $n \geq 1$.
\end{defi}

The following proposition says that if $F \neq \Qp$, then $S$ is quite large: for any $k \geq 1$, the ``$k$-th component'' map $F \otimes_{\OO_F} S \to F_k$ is surjective (if $F=\Qp$, there are restrictions on ``universal norms'').

\begin{prop}
\label{notzero}
Assume that $F \neq \Qp$. If $z \in \MM_{F_k}$,  there exists $\ell \geq 0$ and $x \in S$ such that $x_k = [\pi^\ell](z)$.
\end{prop}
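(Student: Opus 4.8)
The plan is to transport the problem to the additive group via the Lubin-Tate logarithm, solve it there with control on valuations, and transport back after scaling by a large power of $\pi$ (which is what the twist $x_k \mapsto [\pi^\ell](z)$ provides). If $z$ is a torsion point of $\LT$ the claim is trivial with $x \equiv 0$ and $\ell$ chosen so that $[\pi^\ell](z)=0$, so I assume $\log_{\LT}(z)$ has finite $p$-adic valuation.

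First I would set $w_k = \log_{\LT}(z) \in F_k$ (this makes sense since $\log_{\LT}$ converges on the whole open unit disk) and construct a sequence $(w_n)_{n \geq 1}$ with $w_n \in F_n$ and $\Tr_{F_{n+1}/F_n}(w_{n+1}) = (q/\pi)\, w_n$ for all $n \geq 1$, subject to the crucial extra requirement that $\inf_{n \geq 1} \vp(w_n) > -\infty$. For $n \leq k$ one descends from $w_k$ by setting $w_{n-1} = (\pi/q)\,\Tr_{F_n/F_{n-1}}(w_n)$: finitely many steps, each changing $\vp$ by a bounded amount, hence harmless. For $n \geq k$ one builds $w_{n+1}$ upward as a preimage of $(q/\pi)\,w_n$ under $\Tr_{F_{n+1}/F_n}$ of controlled valuation, as follows.

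Here is where $F \neq \Qp$ enters. The extension $F_{n+1}/F_n$ is totally ramified of degree $q$; taking $u_{n+1}$ with $[\pi](u_{n+1}) = u_n$, the polynomial $T^q + \pi T - u_n$ is Eisenstein over $F_n$, so $u_{n+1}$ is a uniformizer, $\OO_{F_{n+1}} = \OO_{F_n}[u_{n+1}]$, and $\mathfrak{d}_{F_{n+1}/F_n} = (q u_{n+1}^{q-1} + \pi) = (\pi)$ since $\vp(q u_{n+1}^{q-1}) = f + (q-1)\vp(u_{n+1}) > 1/e = \vp(\pi)$. As $\Tr_{F_{n+1}/F_n}$ maps $\pi^{-1}\OO_{F_{n+1}} = \mathfrak{d}_{F_{n+1}/F_n}^{-1}$ onto $\OO_{F_n}$, every $a \in F_n$ admits a preimage $b \in F_{n+1}$ under $\Tr_{F_{n+1}/F_n}$ with $\vp(b) \geq \vp(a) - 1/e$. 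Applying this with $a = (q/\pi)\,w_n$ and recalling $\vp(q/\pi) = (d-1)/e$ with $d = [F:\Qp] \geq 2$, the preimage satisfies $\vp(w_{n+1}) \geq \vp(w_n) + (d-2)/e \geq \vp(w_n)$. Hence $\vp(w_n) \geq \vp(w_k)$ for all $n \geq k$, so $\inf_n \vp(w_n) \geq \min_{1 \leq m \leq k} \vp(w_m) > -\infty$, as required. (In the cyclotomic case $q/\pi$ is a unit and this estimate fails, in accordance with the classical obstructions for universal norms.)

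Finally, transport back. Fix $c > 0$ with $\exp_{\LT}$ convergent on $\{\vp > c\}$ and there inverse to $\log_{\LT}$. Choose $\ell \geq 0$ large enough that $\ell/e + \inf_n \vp(w_n) > c$ and also $\ell/e + \vp(\log_{\LT}(z)) > c$, and set $x_n = \exp_{\LT}(\pi^\ell w_n) \in \LT(\MM_{F_n})$. Using that $\log_{\LT}$ and $\exp_{\LT}$ are mutually inverse, $F$-rational, and $\Gal(F_{n+1}/F_n)$-equivariant on the relevant disk, one computes
\[ \log_{\LT}\bigl(\Tr^{\LT}_{F_{n+1}/F_n}(x_{n+1})\bigr) = \Tr_{F_{n+1}/F_n}(\pi^\ell w_{n+1}) = \pi^\ell (q/\pi) w_n = (q/\pi)\log_{\LT}(x_n) = \log_{\LT}\bigl([q/\pi](x_n)\bigr), \]
so that $x = \{x_n\}_{n \geq 1} \in S$; and $x_k = \exp_{\LT}(\pi^\ell \log_{\LT}(z)) = [\pi^\ell](z)$. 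This gives the proposition. The only substantive point is the valuation estimate of the third paragraph, which rests on the different of $F_{n+1}/F_n$ being generated by $\pi$ together with the strict inequality $d \geq 2$, i.e. $q/\pi \in \MM_F$; the rest --- the finite downward descent, the convergence bookkeeping, and the passage between $\LT$ and $\mathbf{G}_a$ --- is routine once $\ell$ is large enough. (One does not actually need proposition \ref{interlog} for this; it could be invoked to produce, in addition, a power series $f \in (\brigplus{,F})^{\psiq = 1/\pi}$ with $f(u_n) = w_n$.)
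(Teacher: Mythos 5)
Your proof is correct and follows essentially the same route as the paper: transport to the additive group via $\log_{\LT}$, use the computation $\mathfrak{d}_{F_{n+1}/F_n}=(\pi)$ together with $q/\pi \in \MM_F$ (i.e.\ $F \neq \Qp$) to build a trace-compatible sequence with bounded denominators, then scale by $\pi^\ell$ and apply $\exp_{\LT}$, checking the identities by injectivity of $\log_{\LT}$ near $0$. The only difference is bookkeeping: the paper pre-multiplies by $\pi^{\ell_2}$ to work with integral $y_n$ and quotes Serre's result $\Tr_{F_{n+1}/F_n}(\OO_{F_{n+1}})=\pi\OO_{F_n}$, whereas you track the valuations $\vp(w_n)$ directly via the equivalent preimage estimate $\vp(b)\geq\vp(a)-1/e$ --- the underlying argument is the same.
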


\begin{proof}
We claim that $\Tr_{F_{n+1}/F_n}(\OO_{F_{n+1}}) = \pi \OO_{F_n}$. Indeed, 
let $\mathcal{D}$ denote the different. We have (see for instance 
proposition~7.11 of~\cite{IK})
\[ \vp(\mathcal{D}_{F_{n+1}/F_n}) = \frac{1}{e} \left(n+1-\frac{1}{q-1}\right) - \frac{1}{e} \left(n -\frac{1}{q-1} \right) = \vp(\pi). \] 
This implies that $\Tr_{F_{n+1}/F_n}(\OO_{F_{n+1}}) = \pi \OO_{F_n}$ 
by proposition~7 of Chapter~III of \cite{SCL}.

Since $\pi$ divides $q/\pi$, this shows that given $y \in \OO_{F_k}$, there exists a sequence $\{y_n\}_{n \geq 1}$ with $x_n \in \OO_{F_n}$ such that $y_k = y$, and $\Tr_{F_{n+1}/F_n}(y_{n+1}) = q/\pi \cdot y_n$ for $n \geq 1$. Take $\ell_1, \ell_2 \geq 0$ such that $\pi^{\ell_1} \OO_{\Cp}$ is in the domain of $\exp_{\LT}$ and such that $\pi^{\ell_2} \log_{\LT}(z) \in \OO_{F_k}$. Let $y = \pi^{\ell_2} \log_{\LT}(z)$. Let $\{y_n\}_{n \geq 1}$ be a sequence as above, let $x_n = \exp_{\LT}(\pi^{\ell_1} y_n)$ and $\ell=\ell_1+\ell_2$. The elements $x_k \ominus [\pi^\ell](z)$, as well as $\Tr^{\LT}_{F_{n+1}/F_n}(x_{n+1}) \ominus [q/\pi](x_n)$ for all $n$, have their $\log_{\LT}$ equal to zero and are in a domain in which $\log_{\LT}$ is injective. This proves the proposition.
\end{proof}

If $x \in S$ and $y_n = \log_{\LT}(x_n)$, then $y_n \in F_n$ and $\Tr_{F_{n+1} / F_n}(y_{n+1}) = q/\pi \cdot y_n$, so that by proposition \ref{interlog}, there exists $f(T) \in \brigplus{,F}$ such that $\psiq (f(T)) = \pi^{-1}  \cdot f(T)$ and $y_n = f(u_n)$ for all $n \geq 1$. If $f(T) \in \brigplus{,F}$ is such that $\psiq(f(T)) = \pi^{-1} \cdot f(T)$, then $\partial f \in (\brigplus{,F})^{\psiq=1}$ and $\partial f \cdot u$ can be seen as an element of $\drig{}(V)^{\psiq=1}$.

\begin{theo}
\label{expchif}
If $x \in S$, and if $f(T) \in \brigplus{,F}$ is such that $f(u_n) = \log_{\LT}(x_n)$ and $\psiq(f(T)) = \pi^{-1} \cdot f(T)$, then $h^1_{F_n,V} (\partial f(T) \cdot u) = (q/\pi)^{-n} \cdot \delta(x_n)$ for all $n \geq 1$.
\end{theo}

\begin{proof}
Let $y = f(T) \otimes t_\pi^{-1} u$, so that $y \in (\brigplus{,F} \otimes_F \dcris(V))^{\psiq=1}$. By theorem \ref{expbk} applied to $y$ with $h=1$, we have $h^1_{F_n,V}(\nabla (y)) = \exp_{F_n,V}(q^{-n} \partial_V(\phiq^{-n}(y)))$ if $n \geq 1$. Since $\phiq^{-n} \circ \partial = \pi^n \cdot \partial \circ \phiq^{-n}$, this implies that  
\[ h^1_{F_n,V} (\partial f(T) \cdot u) =\exp_{F_n,V}(q^{-n} \partial_V(\phiq^{-n}(y))) = (q/\pi)^{-n} \cdot \exp_{F_n,V}(\log_{\LT}(x_n) \cdot u). \]
By example 3.10.1 of \cite{BK90} and lemma \ref{expcompat}, we have $\delta(x_n) = \exp_{F_n,V}(\log_{\LT}(x_n) \cdot u)$. This proves the theorem.
\end{proof}

\begin{rema}
\label{colchecol}
If $F=\Qp$ and $\pi=q=p$ and $x = \{x_n\}_{n \geq 1}$, this theorem says that $\Exp^*_{\Qp} ( \delta(x) ) = \partial \log \Col_x(T)$, which is (iii) of proposition V.3.2 of \cite{CC99} (see theorem II.1.3 of ibid for the definition of the map $\Exp^*_{\Qp} : \Hiw^1(F,\Qp(1)) \to \drig{}(\Qp(1))^{\psiq=1}$).
\end{rema}

\begin{rema}
\label{nocolpow}
If $x \in S$, then by proposition \ref{interlog}, there is a power series $f(T)$ such that $f(u_n) = \log_{\LT}(x_n)$ for $n \geq 1$. Is there a power series $g(T) \in \OO_F\dcroc{T}$ such that $g(u_n) = x_n$, so that $f(T) = \log g(T)$?

If $F=\Qp$, such a power series is the classical Coleman power series \cite{RC79}. If $F \neq \Qp$ and $x \in S$ and $z$ is a $[q/\pi]$-torsion point, and $k \geq d-1$ so that $z \in F_k$, then the sequence $x' = \{x'_n\}_{n \geq 1}$ defined by $x_n'=x_n$ if $n \neq k$ and $x_k'=x_k \oplus z$ also belongs to $S$. This means that we cannot na\"{\i}vely interpolate $x$.
\end{rema}

\subsection{Perrin-Riou's big exponential map}
\label{bprfan}

In this last section, we explain how the explicit formulas of the previous sections can be used to give a Lubin-Tate analogue of Perrin-Riou's ``big exponential map'' \cite{BP94}. Take $h \geq 1$ such that $\Fil^{-h} \dcris(V) = \dcris(V)$. If $f \in \brigplus{,F} \otimes_F \dcris(V)$, let $\Delta(f)$ be the image of
$\oplus_{k=0}^h \partial^k(f)(0)$ in $\oplus_{k=0}^h \dcris(V)/(1-\pi^k \phiq)$.

\begin{lemm}
\label{colcol}
There is an exact sequence:
\begin{multline*}
 0 \to \oplus_{k=0}^h t_\pi^k \dcris(V)^{\phiq=\pi^{-k}} \to
\left( \brigplus{,F} \otimes_F \dcris(V) \right)^{\psiq=1} 
\xrightarrow{1-\phiq} \\ 
(\brigplus{,F})^{\psiq=0} \otimes_F \dcris(V)
\xrightarrow{\Delta} \oplus_{k=0}^h \frac{\dcris(V)}{1-\pi^k 
\phiq}\to 0.
\end{multline*}
\end{lemm}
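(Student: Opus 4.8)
The plan is to deduce the statement from the one-dimensional case by a d\'evissage. First I would check that the maps are well defined. If $\psiq(y)=y$ then $\psiq((1-\phiq)y)=y-\psiq(\phiq(y))=0$, so $1-\phiq$ does map $(\brigplus{,F}\otimes_F\dcris(V))^{\psiq=1}$ into $(\brigplus{,F})^{\psiq=0}\otimes_F\dcris(V)$. Next, since $\partial\circ\phiq=\pi\cdot\phiq\circ\partial$ and the constant term of $\phiq(g)$ is $\phiq(g(0))$, we get $\partial^k((1-\phiq)y)(0)=(1-\pi^k\phiq)(\partial^k(y)(0))$, which is zero in $\dcris(V)/(1-\pi^k\phiq)$; hence $\Delta\circ(1-\phiq)=0$, and this computation is also what dictates the correct target of $\Delta$. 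Finally, on $(\brigplus{,F}\otimes_F\dcris(V))^{\psiq=1}$ the kernel of $1-\phiq$ is $(\brigplus{,F}\otimes_F\dcris(V))^{\phiq=1}$, because $\phiq(y)=y$ forces $\psiq(y)=y$; triangularizing $\phiq$ and applying lemma~\ref{fxphiRp} iteratively (the same kind of bootstrap as in the proof of theorem~\ref{psiplus}) one checks that $(\brigplus{,F}\otimes_F\dcris(V))^{\phiq=1}=\oplus_{m\geq0}t_\pi^m\cdot\dcris(V)^{\phiq=\pi^{-m}}$, which identifies the kernel term.

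I would then treat the scalar case $\dcris(V)=F$ with $\phiq$ acting by $\mu\in F^\times$, so that the module is $\brigplus{,F}$ with Frobenius $\mu\phiq$ and $\psi$-operator $\mu^{-1}\psiq$. By lemma~\ref{fxphiRp}, $1-\mu\phiq\colon\brigplus{,F}\to\brigplus{,F}$ is bijective unless $\mu=\pi^{-m}$ for some $m\geq0$, in which case its kernel is $F t_\pi^m$ and its image is $\{f\in\brigplus{,F}:\partial^m(f)(0)=0\}$. A short $\psiq$-bootstrap shows that if $(1-\mu\phiq)y=g$ in $\brigplus{,F}$ and $\psiq(g)=0$ then $\psiq(y)=\mu y$; combined with lemma~\ref{kpsienough}, which provides for each $m$ an element of $(\brigplus{,F})^{\psiq=0}$ with nonzero $\partial^m(\cdot)(0)$, this gives the exact sequence in the scalar case: $1-\mu\phiq$ is an isomorphism $(\brigplus{,F})^{\psiq=\mu}\to(\brigplus{,F})^{\psiq=0}$ when $\mu$ is not of the form $\pi^{-m}$, and for $\mu=\pi^{-m}$ one gets an exact sequence $0\to F t_\pi^m\to(\brigplus{,F})^{\psiq=\pi^{-m}}\to(\brigplus{,F})^{\psiq=0}\to F\to0$ whose middle map is $1-\mu\phiq$ and whose last map is $f\mapsto\partial^m(f)(0)$.

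For the general case I would triangularize $\phiq$ over a finite Galois extension $E/F$ splitting its characteristic polynomial (the cited lemmas on $\brigplus{}$ all hold with $E$-coefficients), realizing $\brigplus{,E}\otimes_E(E\otimes_F\dcris(V))$ as a successive extension of rank-one pieces of the above type, and splice the scalar sequences. The point that keeps this short is corollary~\ref{surjplus}: $\psiq-a$ is onto $\brigplus{}$ for every $a$, hence $\psiq-1$ and $\psiq$ are onto each graded piece, hence onto the whole module, so passing to $\{\psiq=1\}$ and $\{\psiq=0\}$ is exact on the relevant short exact sequences and the snake lemma applies. The kernel term has already been computed directly; for the cokernel, $\Delta$ factors through $\coker(1-\phiq)$, is compatible with the d\'evissage (matching the snake connecting maps on both sides), and both $\coker(1-\phiq)$ and $\oplus_m\dcris(V)/(1-\pi^m\phiq)$ are finite dimensional of dimension $\sum_m\dim\dcris(V)^{\phiq=\pi^{-m}}$, so $\Delta$ induces an isomorphism between them. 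One then descends from $E$ to $F$ by taking $\Gal(E/F)$-invariants, using that $\brigplus{,E}=E\otimes_F\brigplus{,F}$ has vanishing positive Galois cohomology.

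The last point is that the sums over $m$ are finite and run only from $0$ to $h$, and this is where weak admissibility of the filtered $\phiq$-module $\dcris(V)$ is used (Kisin--Ren \cite{KR09}): if $0\neq v\in\dcris(V)^{\phiq=\pi^{-m}}$, the line $Fv$ with its induced filtration is a sub-object of Newton number $\val_\pi(\pi^{-m})=-m$ and Hodge number $\geq-h$ since $\Fil^{-h}\dcris(V)=\dcris(V)$, so $-m\geq-h$; the same bound shows $\pi^{-m}$ is not an eigenvalue of $\phiq$, hence $\dcris(V)/(1-\pi^m\phiq)=0$, for $m>h$. I expect the main obstacle to be the d\'evissage of the third paragraph: every ingredient (the scalar sequence, surjectivity of $\psiq-a$, the direct computation of the $\phiq=1$ space) is already available, but organizing the snake-lemma bookkeeping and the cokernel dimension count so that the final four-term sequence comes out with exactly the stated, basis-independent kernel and cokernel takes some care.
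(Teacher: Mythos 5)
Your proposal is correct in outline but takes a genuinely different route from the paper's proof. The paper first notices that for $f=(1-\phiq)g$ one has $\psiq f=0$ if and only if $\psiq g=g$, so the $\psiq$-eigenspace conditions come along for free and the whole problem reduces to showing $\ker\Delta\subseteq\mathrm{im}(1-\phiq)$ on all of $\brigplus{,F}\otimes_F\dcris(V)$; it then disposes of the infinite-dimensional part in one stroke by observing that $1-\phiq$ is bijective on $T^{h+1}\brigplus{,F}\otimes_F\dcris(V)$ (the Frobenius slopes there are positive), so that everything becomes a kernel/cokernel dimension count on the finite-dimensional quotient modulo $T^{h+1}$. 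You instead triangularize $\phiq$ over a splitting extension $E$, establish the four-term exact sequence in the rank-one scalar case from lemmas~\ref{fxphiRp} and~\ref{kpsienough}, splice via the snake lemma (using corollary~\ref{surjplus} to keep the $\{\psiq=1\}$ and $\{\psiq=0\}$ rows short exact), descend by Galois, and invoke weak admissibility to cut off the range of $m$. Both arguments rest on the same underlying slope bound — that the Frobenius slopes of $\dcris(V)$ are $\geq -h$ — which you make explicit via weak admissibility while the paper leaves it implicit in ``slopes $>0$''. What the paper's $T^{h+1}$ device buys is precisely the avoidance of triangularization, Galois descent, and the bookkeeping you flag as the main obstacle: to run the five-lemma one must match the two six-term snake sequences including their connecting maps, and this compatibility check is real work that the one-line dimension count replaces. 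Your version in exchange makes the eigenvalue-by-eigenvalue structure (and the role of lemma~\ref{fxphiRp}) more transparent.
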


\begin{proof}
Note that the map $\phiq$ acts diagonally on tensor products. It is easy to see that $\ker(1-\phiq) = \oplus_{k=0}^h t_\pi^k \dcris(V)^{\phiq=\pi^{-k}}$, that $\Delta$ is surjective,  and that $\mathrm{im}(1-\phiq) \subset \ker \Delta$, so we now prove that $\mathrm{im}(1-\phiq) = \ker \Delta$. 

If $f,g \in \brigplus{,F} \otimes_F \dcris(V)$ and $f=(1-\phiq)g$, then $\psiq(f)=0$ if and only if $\psiq(g)=g$. It is therefore enough to show that if $f \in \brigplus{,F} \otimes_F \dcris(V)$ is such that $\Delta(f)=0$, then $f=(1-\phiq)g$ for some $g \in \brigplus{,F} \otimes_F \dcris(V)$. 

The map $1-\phiq : T^{h+1} \brigplus{,F} \otimes_F \dcris(V)
\to T^{h+1} \brigplus{,F} \otimes_F \dcris(V)$ is bijective because the slopes of $\phiq$ on $T^{h+1} \brigplus{,F} \otimes_F D$ are $>0$. This implies that $1-\phiq$ induces a sequence

\begin{multline*}
0 \to \oplus_{k=0}^h t_\pi^k \dcris(V)^{\phiq=\pi^{-k}} \to 
\frac{\brigplus{,F} \otimes_F \dcris(V)}{T^{h+1} \brigplus{,F} \otimes_F \dcris(V)}
\xrightarrow{\overline{1-\phiq}} \\
\frac{\brigplus{,F} \otimes_F \dcris(V)}{T^{h+1} \brigplus{,F} \otimes_F \dcris(V)} 
\xrightarrow{\Delta} \oplus_{k=0}^h \frac{\dcris(V)}{1-\pi^k 
\phiq}.
\end{multline*} 
We have $\ker(\overline{1-\phiq}) = \oplus_{k=0}^h t_\pi^k \dcris(V)^{\phiq=\pi^{-k}}$ and by comparing dimensions, we see that $\coker(\overline{1-\phiq}) = \oplus_{k=0}^h \dcris(V) / (1-\pi^k 
\phiq)$. This and the bijectivity of $1-\phiq$ on $T^{h+1} \brigplus{,F} \otimes_F \dcris(V)$ imply the claim.
\end{proof}

If $f \in ((\brigplus{,F})^{\psiq=0} \otimes_F \dcris(V))^{\Delta=0}$, 
then by lemma \ref{colcol}
there exists $y \in ( \brigplus{,F} \otimes_F \dcris(V) )^{\psiq=1}$ 
such that $f = (1-\phiq)y$. Since 
$\nabla_{h-1} \circ \cdots \circ \nabla_0$ kills 
$\oplus_{k=0}^{h-1} t_\pi^k \dcris(V)^{\phiq=\pi^{-k}}$ we see that
$\nabla_{h-1} \circ \cdots \circ \nabla_0 (y)$ does not depend upon
the choice of such a $y$ (unless $\dcris(V)^{\phiq=\pi^{-h}} \neq 0$). 

\begin{defi}\label{expbpr}
Let $h \geq 1$ be such that $\Fil^{-h} \dcris(V) =
\dcris(V)$ and such that $\dcris(V)^{\phiq=\pi^{-h}}=0$. 
We deduce from the above construction a well-defined map:
\[ \Omega_{V,h}:
((\brigplus{,F})^{\psiq=0} \otimes_F \dcris(V))^{\Delta=0} \to
\drig{}(V)^{\psiq=1}, \]
given by
$\Omega_{V,h}(f) = \nabla_{h-1} \circ \cdots \circ \nabla_0 (y)$ where the element 
$y \in ( \brigplus{,F} \otimes_F \dcris(V) )^{\psiq=1}$ 
is such that $f = (1-\phiq)y$ and is provided by lemma \ref{colcol}. 

If $\dcris(V)^{\phiq=\pi^{-h}} \neq 0$, we get a map
\[ \Omega_{V,h}:
((\brigplus{,F})^{\psiq=0} \otimes_F \dcris(V))^{\Delta=0} \to
\drig{}(V)^{\psiq=1}/ V^{G_F=\chilt^h}. \]
\end{defi}

Let $u$ be a basis of $F(\chilt)$ as above, and let $e_j = u^{\otimes j}$ if $j \in \ZZ$.

\begin{theo}
\label{imgbpr}
Take $y \in (\brigplus{,F} \otimes_F \dcris(V))^{\psiq=1}$ and let $h \geq 1$ be such that
$\Fil^{-h}\dcris(V)=\dcris(V)$. Let $f=(1-\phiq)y$ so that $f \in ((\brigplus{,F})^{\psiq=0} \otimes_F \dcris(V))^{\Delta=0}$.

If $j \in \ZZ$ and $h+j \geq 1$, then
\begin{multline*}
h^1_{F_n,V(\chilt^j)}(\Omega_{V,h}(f) \otimes
e_j) = 
(-1)^{h+j-1} (h+j-1)! 
 \times \\ \begin{cases}
\exp_{F_n,V(\chilt^j)}(q^{-n} \partial_{V(\chilt^j)}(\phiq^{-n} 
(\partial^{-j}y \otimes t_\pi^{-j}e_j)))
& \text{if $n \geq 1$} \\
\exp_{F,V(\chilt^j)}((1-q^{-1} \phiq^{-1})\partial_{V(\chilt^j)} 
(\partial^{-j}y \otimes t_\pi^{-j}e_j))
& \text{if $n=0$.}
\end{cases} 
\end{multline*}
If $j \in \ZZ$ and $h+j \leq 0$, then
\begin{multline*}
\exp^*_{F_n,V^*(1-j)}(h^1_{F_n,V(\chilt^j)}(\Omega_{V,h}(f) \otimes
e_j))=
\\
\frac{1}{(-h-j)!}
\begin{cases} 
q^{-n} \partial_{V(\chilt^j)}(\phiq^{-n} 
(\partial^{-j}y \otimes t_\pi^{-j}e_j))
& \text{if $n \geq 1$} \\
(1-q^{-1} \phiq^{-1})\partial_{V(\chilt^j)} 
(\partial^{-j}y \otimes t_\pi^{-j}e_j)
& \text{if $n = 0$.}
\end{cases}
\end{multline*}
\end{theo}

\begin{proof}
If $h+j \geq 1$, the following diagram is commutative:
\[ \begin{CD}
\drig{}(V)^{\psiq=1} @>{\otimes e_j}>> \drig{}(V(\chilt^j))^{\psiq=1}  \\
@A{\nabla_{h-1} \circ \cdots \circ \nabla_0}AA 
@A{\nabla_{h+j-1} \circ \cdots \circ \nabla_0}AA \\
\left( \brigplus{,F} \otimes_F \dcris(V) \right)^{\psiq=1}
@>{\partial^{-j} \otimes t^{-j} e_j}>>
\left( \brigplus{,F} \otimes_F \dcris(V(\chilt^j)) \right)^{\psiq=1},
\end{CD} \]
and the theorem is a straightforward consequence of theorem 
\ref{expbk} applied to $\partial^{-j}y \otimes t^{-j}e_j$, $h+j$
and $V(\chilt^j)$ (which are the $j$-th twists of $y$, $h$ and $V$).

If  $h+j \leq 0$, and $\Gamma_{F_n}$ is torsion free, 
then theorem \ref{interdual} shows that
\begin{multline*}
\exp^*_{F_n,V^*(1-j)}
(h^1_{F_n,V(\chilt^j)}(\nabla_{h-1} \circ \cdots 
\circ \nabla_0 (y) \otimes e_j))  \\ = 
q^{-n} \partial_{V(\chilt^j)}(\phiq^{-n}(\nabla_{h-1} \circ \cdots 
\circ \nabla_0 (y) \otimes e_j)) 
\end{multline*}
in $\dcris(V(\chilt^j))$, 
and a short computation involving Taylor series shows that 
\[
\partial_{V(\chilt^j)}(\phiq^{-n}(\nabla_{h-1} \circ \cdots 
\circ \nabla_0 (y) \otimes e_j)) = 
(-h-j)!^{-1} \partial_{V(\chilt^j)}(\phiq^{-n} 
(\partial^{-j}y \otimes t_\pi^{-j}e_j)). 
\]
To get the other $n$, we  corestrict.
\end{proof}

\begin{coro}
\label{twpr}
We have $\Omega_{V,h}(x) \otimes e_j = \Omega_{V(\chilt^j),h+j}(\partial^{-j}x
\otimes t_\pi^{-j}e_j)$ and $\nabla_h \circ \Omega_{V,h}(x) = \Omega_{V,h+1}(x)$.
\end{coro}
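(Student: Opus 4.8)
The plan is to deduce both identities from the construction of $\Omega_{V,h}$ via Lemma~\ref{colcol}, reusing the commutative square established inside the proof of Theorem~\ref{imgbpr}. In each case the strategy is to fix a \emph{single} lift $y$ of $x$ under $1-\phiq$ and then chase definitions.

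For the second identity, given $x$ in the domain of $\Omega_{V,h}$ I would use Lemma~\ref{colcol} to pick $y\in(\brigplus{,F}\otimes_F\dcris(V))^{\psiq=1}$ with $(1-\phiq)y=x$. Since $\partial\circ\phiq=\pi\cdot\phiq\circ\partial$ on $\brigplus{,F}\otimes_F\dcris(V)$ and evaluation at $T=0$ turns $\phiq$ into the Frobenius of $\dcris(V)$, one gets $\partial^{h+1}(x)(0)=(1-\pi^{h+1}\phiq)(\partial^{h+1}(y)(0))$; together with $\Delta(x)=0$ for the given $h$ this shows that $x$ also lies in the domain of $\Omega_{V,h+1}$ and that the same $y$ computes it. Then $\Omega_{V,h+1}(x)=\nabla_h\circ\nabla_{h-1}\circ\cdots\circ\nabla_0(y)=\nabla_h(\Omega_{V,h}(x))$, which is the claim (the ambiguity of $y$ modulo $\oplus_{k=0}^{h+1}t_\pi^k\dcris(V)^{\phiq=\pi^{-k}}$ is killed, consistently with Definition~\ref{expbpr}).

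For the first identity, I would again pick $y$ with $(1-\phiq)y=x$ and set $y_j=\partial^{-j}y\otimes t_\pi^{-j}e_j$. The key point is that $y_j\in(\brigplus{,F}\otimes_F\dcris(V(\chilt^j)))^{\psiq=1}$ and $(1-\phiq)y_j=\partial^{-j}x\otimes t_\pi^{-j}e_j$, so that $y_j$ is a legitimate choice in the definition of $\Omega_{V(\chilt^j),h+j}(\partial^{-j}x\otimes t_\pi^{-j}e_j)$. Checking this amounts to verifying that the twist operation $z\mapsto\partial^{-j}z\otimes t_\pi^{-j}e_j$ intertwines $\phiq$ and $\psiq$ correctly — using $\partial\circ\phiq=\pi\cdot\phiq\circ\partial$, $\partial\circ\psiq=\pi^{-1}\cdot\psiq\circ\partial$, $\phiq(t_\pi)=\pi t_\pi$, $\phiq(e_j)=e_j$ and the $\phiq^{-1}$-twist inherent in the definition of $\psiq$ on $\brigplus{,F}[1/t_\pi]\otimes_F\dcris(-)$ — and that for $j>0$ the choice of antiderivative $\partial^{-j}$ (ambiguous modulo polynomials of degree $<j$ in $t_\pi$) is immaterial after applying $\nabla_{h+j-1}\circ\cdots\circ\nabla_0$. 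Granting this, $\Omega_{V(\chilt^j),h+j}(\partial^{-j}x\otimes t_\pi^{-j}e_j)=\nabla_{h+j-1}\circ\cdots\circ\nabla_0(y_j)$, and the commutative square in the proof of Theorem~\ref{imgbpr} identifies the right-hand side with $(\nabla_{h-1}\circ\cdots\circ\nabla_0(y))\otimes e_j=\Omega_{V,h}(x)\otimes e_j$. Commutativity of that square itself comes down to the operator identities $\nabla_i\circ t_\pi=t_\pi\circ\nabla_{i-1}$ and $\nabla_{j-1}\circ\cdots\circ\nabla_0=t_\pi^j\partial^j$, which account for the index shift $\nabla(e_j)=je_j$ and for the extra $\partial^{-j}$ on the bottom arrow.

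The step I expect to be the main obstacle is exactly this last piece of bookkeeping: making the map $\partial^{-j}\otimes t_\pi^{-j}e_j$ precise (including the antiderivative ambiguity when $j>0$), checking that it lands in the $\psiq=1$ eigenspace, and reconciling the descriptions of the domains of $\Omega_{V,h}$ and $\Omega_{V,h+1}$. Once the commutative square of Theorem~\ref{imgbpr} is taken for granted the rest is routine, but it does require care with the various $\pi$-powers.
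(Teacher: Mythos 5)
Your proof is correct and is precisely what the paper leaves implicit: the corollary is stated without proof in the paper, and the intended argument is exactly the one you give — fix a single lift $y$ with $(1-\phiq)y=x$ from Lemma~\ref{colcol}, observe that the same $y$ (resp.\ $\partial^{-j}y\otimes t_\pi^{-j}e_j$) serves as a lift for $\Omega_{V,h+1}(x)$ (resp.\ $\Omega_{V(\chilt^j),h+j}(\partial^{-j}x\otimes t_\pi^{-j}e_j)$), and then invoke the commutative square displayed in the proof of Theorem~\ref{imgbpr} together with $\nabla_h\circ\nabla_{h-1}\circ\cdots\circ\nabla_0=\nabla_h\circ(\text{map for }h)$. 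Your side-checks — that $(1-\phiq)$ intertwines with $\partial^{-j}\otimes t_\pi^{-j}e_j$ via $\phiq\partial^{-j}=\pi^j\partial^{-j}\phiq$ and $\phiq(t_\pi^{-j}e_j)=\pi^{-j}t_\pi^{-j}e_j$, that the $\partial^{-j}$ ambiguity is killed by $\nabla_{j-1}\circ\cdots\circ\nabla_0=t_\pi^j\partial^j$, and that $x$ lies in the domain of $\Omega_{V,h+1}$ — are exactly the points that need care, and you handle them correctly.
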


\begin{rema}
The notation $\partial^{-j}$ is somewhat abusive if $j \geq 1$ as
$\partial$ is not injective on $\brigplus{,F}$
(it is surjective as can be seen by
``integrating'' directly a power series) but the reader can check 
that this leads to no ambiguity in the formulas of theorem
\ref{imgbpr} above. 
\end{rema}

If $F=\Qp$ and $\pi=p$, definition \ref{expbpr} and theorem \ref{imgbpr} are given in \S II.5 of \cite{LB3}. They imply that $\Omega_{V,h}$ co\"{\i}ncides with Perrin-Riou's exponential map (see theorem 3.2.3 of \cite{BP94}) after making suitable identifications (theorem II.13 of \cite{LB3}). 

Our definition therefore generalizes Perrin-Riou's exponential map to the $F$-analytic setting. We hope to use the results of \cite{F05} and \cite{F08} to relate our constructions to suitable Iwasawa algebras as in the cyclotomic case.

\providecommand{\bysame}{\leavevmode ---\ }
\providecommand{\og}{``}
\providecommand{\fg}{''}
\providecommand{\smfandname}{\&}
\providecommand{\smfedsname}{\'eds.}
\providecommand{\smfedname}{\'ed.}
\providecommand{\smfmastersthesisname}{M\'emoire}
\providecommand{\smfphdthesisname}{Th\`ese}

\end{document}